\newcommand{\comment}[1]{}
\newcommand{\bA}{\mathbf{A}}
\newcommand{\sL}{\mathscr{L}}
\newcommand{\bN}{\mathbf{N}}
\newcommand{\cO}{\mathcal{O}}
\newcommand{\bQ}{\mathbf{Q}}
\newcommand{\bS}{\mathbf{S}}
\newcommand{\bV}{\mathbf{V}}
\newcommand{\fp}{\mathfrak{p}}
\newcommand{\fq}{\mathfrak{q}}
\numberwithin{equation}{section}
\newtheorem{theorem}[equation]{Theorem}
\newtheorem{proposition}[equation]{Proposition}
\newtheorem{lemma}[equation]{Lemma}
\newtheorem{corollary}[equation]{Corollary}
\theoremstyle{definition}
\newtheorem{rmk}[equation]{Remark}
\newenvironment{remark}[1][]{\begin{rmk}[#1] \pushQED{\qed}}{\popQED \end{rmk}}
\newtheorem{eg}[equation]{Example}
\newenvironment{example}[1][]{\begin{eg}[#1] \pushQED{\qed}}{\popQED \end{eg}}
\newtheorem{defnaux}[equation]{Definition}
\newenvironment{definition}[1][]{\begin{defnaux}[#1]\pushQED{\qed}}{\popQED \end{defnaux}}
\newtheorem{constr}[equation]{Construction}
\newcommand{\stacks}[1]{\cite[\href{http://stacks.math.columbia.edu/tag/#1}{Tag~#1}]{stacks}}
\newcommand{\DOI}[1]{\href{http://doi.org/#1}{\color{purple}{\tiny\tt DOI:#1}}}
\newcommand{\arxiv}[1]{\href{http://arxiv.org/abs/#1}{{\tiny\tt arXiv:#1}}}
\newcommand{\defn}[1]{\textit{#1}}
\let\ol\overline
\let\ul\underline
\let\lbb\llbracket
\let\rbb\rrbracket
\renewcommand{\phi}{\varphi}
\DeclareMathOperator{\rank}{rk}
\DeclareMathOperator{\brank}{brk}
\DeclareMathOperator{\im}{im} 
\DeclareMathOperator{\Sym}{Sym}
\DeclareMathOperator{\Spec}{Spec}
\newcommand{\id}{\mathrm{id}}
\newcommand{\pol}{\mathrm{pol}}
\DeclareMathOperator{\Frac}{Frac}
\DeclareMathOperator{\Proj}{Proj}
\newcommand{\GL}{\mathbf{GL}}
\DeclareMathOperator{\Sh}{Sh}
\newcommand{\umu}{\ul{\smash{\mu}}} 
\newcommand{\ulambda}{\ul{\smash{\lambda}}}
\renewcommand{\int}{\operatorname{int}}
\newcommand{\lpp}{(\!(}
\newcommand{\rpp}{)\!)}
\title{Uniformity for limits of tensors}
\author{Arthur Bik}
\address{Institute for Advanced Study, Princeton, NJ, USA, and MPI for Mathematics in the Sciences, Germany}
\email{\href{mailto:mabik@ias.edu}{mabik@ias.edu}}
\urladdr{\url{http://arthurbik.nl}}
\author{Jan Draisma}
\address{University of Bern, Switzerland, and Eindhoven University of Technology, The Netherlands}
\email{\href{mailto:jan.draisma@math.unibe.ch}{jan.draisma@math.unibe.ch}}
\urladdr{\url{https://mathsites.unibe.ch/jdraisma/}}
\author{Rob Eggermont}
\address{Eindhoven University of Technology, The Netherlands}
\email{\href{mailto:r.h.eggermont@tue.nl}{r.h.eggermont@tue.nl}}
\urladdr{\url{https://www.tue.nl/en/research/researchers/rob-eggermont/}}
\author{Andrew Snowden}
\address{Department of Mathematics, University of Michigan, Ann Arbor, MI, USA}
\email{\href{mailto:asnowden@umich.edu}{asnowden@umich.edu}}
\urladdr{\url{http://www-personal.umich.edu/~asnowden/}}
\thanks{AB was partially supported by Postdoc.Mobility Fellowship number P400P2\_199196 from the Swiss National Science
Foundation and a grant from the Simons Foundation (816048, LC).
JD was partially supported by the Vici grant 639.033.514 from the Netherlands Organisation for Scientific Research and by Swiss National Science Foundation project grant 200021\_191981.
RE was partially supported by the NWO Veni grant entitled {\em Stability and structure in infinite-dimensional spaces}, project number 016.Veni.192.113.
AS was supported by NSF grant DMS-1453893.
}
\date{December 20, 2023}
\begin{document}

\begin{abstract}
There are many notions of rank in multilinear algebra: tensor rank, partition rank, slice rank, and strength (or Schmidt rank) are a few examples. Typically the rank $\le r$ locus is not Zariski closed, and understanding the closure (the locus with \emph{border rank} $\le r$) is an important problem. We make two contributions in this direction: we prove a de-bordering result, which bounds border rank as a function of rank; and we show that the limits required to realize a point of border rank $\le r$ do not become increasingly complicated as the dimension of the vector space increases. We prove both results for a fairly general class of ranks. We deduce our theorems on ranks from foundational results on $\GL$-varieties, which are infinite dimensional algebraic varieties on which the infinite general linear group acts. For example, an important result concerns the existence of curves on $\GL$-varieties.
\end{abstract}

\maketitle
\tableofcontents

\section{Introduction}

Let $V$ be a finite dimensional vector space over an algebraically
closed field $K$ of characteristic zero. There are many natural and
interesting subvarieties of $V^{\otimes n}$ defined by properties of
tensors, e.g.~the tensor rank $\le r$ locus and its Zariski
closure. The general study of such varieties is called the \emph{geometry
of tensors} \cite{landsberg}; this is a powerful perspective that has
applications ranging from complexity theory, where the border rank of
matrix multiplication is a topic of much research, to phylogenetic
models and commutative algebra.

In \cite{bdes}, we introduced the notion of \defn{$\GL$-variety};
these are certain infinite dimensional varieties on which the infinite
general linear group $\GL$ acts. These varieties are interesting in
their own right, but are also particularly well-suited to study geometric aspects of tensors. In \cite{bdes}, we established a number of basic properties of $\GL$-varieties; for instance, we proved a version of Chevalley's classical theorem on constructible sets. The purpose of this paper is to prove some finer (and more difficult) theorems about $\GL$-varieties: we construct curves on $\GL$-varieties, and show that the image closure of a map of $\GL$-varieties can be described by limits, just as in classical algebraic geometry. As one might expect, these results have applications to the geometry of tensors.

\subsection{Application to de-bordering}

Before stating our main theorems, which require some technical preliminaries, we describe a concrete application of them to tensor ranks. There are many notions of tensor rank. Typically, the rank $\le r$ locus is not Zariski closed, and one defines a notion of border rank by considering the closure of this set. A \defn{de-bordering} result bounds the border rank above by a function of the rank. De-bordering is central to questions in geometric complexity theory, such as Valiant's conjecture on the permanent or the approach of Mulmuley and Sohoni to P vs.\ NP; see \cite{dgijl}.

We prove a very general de-bordering theorem. We state a version here
for symmetric tensors. Fix $d \ge 1$. Suppose that for each finite
dimensional vector space $V$ we have a subset $Z(V)$ of $\Sym^d(V)$, the ``pure tensors,'' satisfying the following three conditions:
\begin{enumerate}
\item $Z(V)$ contains a non-zero form when $V$ is non-zero;
\item $Z(V)$ is a Zariski closed subset of $\Sym^d(V)$; and 
\item $Z$ is functorial, that is, if $V \to W$ is a linear map then the induced map $\Sym^d(V) \to \Sym^d(W)$ carries $Z(V)$ into $Z(W)$.
\end{enumerate}
Let $x \in \Sym^d(V)$ be a symmetric tensor. We define the
\defn{$Z$-rank} of $x$, denoted $\rank_Z(x)$, to be the minimal $r \in
\bN=\{0,1,2,\ldots\}$ such that $x$ is contained in $rZ(V)=Z(V)+\cdots+Z(V)$; the first and
last condition imply that such an $r$ exists. 
Similarly, we define the \defn{border $Z$-rank} of $x$, denoted $\brank_Z(x)$, to be the minimal $r$ such that $x$ is contained in the Zariski closure of $rZ(V)$. Some examples:
\begin{itemize}
\item Take $Z(V)$ to be the set of $d$th powers, i.e., elements of the form $\ell^d$ for $\ell \in V$. In this case, $Z$-rank is Waring rank.
\item Take $Z(V)$ to be the set of all polynomials that factor as $\ell \cdot f$ where $\ell \in V$ and $f \in \Sym^{d-1}(V)$. In this case, $Z$-rank is \defn{slice rank} \cite{bbov} or {\em q-rank} \cite{des} (for $d=3$). It is known that the slice rank $\le r$ locus is closed.
\item Take $Z(V)$ to be the set of all polynomials of the form $\ell_1
\cdots \ell_d$ with $\ell_i \in V$. This is the Chow variety, and the
$Z$-rank is called the Chow rank. Motivated by a relation to Valiant's
conjecture, equations for the border Chow rank $\leq r$ locus are studied in 
\cite{gu}.
\item Take $Z(V)$ to be the set of all reducible polynomials, i.e., those elements of $\Sym^d(V)$ that factor as $fg$ where $f \in \Sym^e(V)$ and $g \in \Sym^{d-e}(V)$ with $0<e<d$. In this case, $Z$-rank is called \defn{strength} or \defn{Schmidt rank}, and has played an important role in commutative
algebra \cite{ah}, number theory \cite{schmidt}, and algebraic combinatorics \cite{kazi}. It is known that strength and border strength are unequal in general \cite{bbov2}.
\item There are many other possible choices for $Z$. For example, one could take $Z(V)$ to be polynomials that factor as $q \cdot f$ where $q \in \Sym^2(V)$ and $f \in \Sym^{d-2}(V)$. We do not think this has been studied previously  (for $d \ge 2$). This kind of example shows the flexibility of our framework.
\end{itemize}
We prove the following de-bordering theorem:

\begin{theorem} \label{thm:DeBorderingSymd}
Let $Z$ be as above. There is a function $\Phi \colon \bN \to \bN$, depending only on $Z$, such that $\brank_Z(x) \le \Phi(\rank_Z(x))$ for all $x \in \Sym^d(V)$ and all finite dimensional $V$.
\end{theorem}

The interesting part of the theorem is that the bound is independent of the dimension of the ambient vector space $V$. We also prove the following closely related result:

\begin{theorem} \label{thm:UniformLimitSymd}
Let $Z$ be as above. Given $r \ge 0$, there is a constant $\sigma_{Z,r}$, depending only on $Z$ and $r$, with the following property. If $x \in \Sym^d(V)$ has border $Z$-rank $\le r$ then there is an expression
\begin{displaymath}
x = \lim_{t \to 0} \frac{1}{t^s} \sum_{i=1}^r y_i(t),
\end{displaymath}
where $s \le \sigma_{Z,r}$ and $y_i(t)$ is a $K\lbb t \rbb$-point of $Z$.
\end{theorem}

Again, the key point is that the bound $\sigma_{Z,r}$ is independent of the dimension of $V$. This theorem roughly means that one does not require increasingly complicated limits to detect border rank as the dimension of $V$ grows. 

Both of these theorems are well-known and straightforward in the
linear regime, where the dimension of $Z(V)$ grows as a linear function of $\dim(V)$
for $\dim(V) \gg 0$. Indeed, in this case it follows that any element
in $\Sym^d(V)$ of $Z$-rank $\leq r$ lies in $r Z(U) \subseteq \Sym^d(U)$
for some subspace $U$ of $V$ whose dimension is bounded by some constant
independent of $V$, and the results follow by applying standard results
in algebraic geometry to the addition maps $Z(U)^r \to \Sym^d(U)$. This
covers the cases of Waring rank and Chow rank above. The other cases do
not fall in the linear regime (for $d \geq 3$).

In the linear regime, research along the lines of the two theorems
above goes back (at least) to \cite{ll}, where upper bounds on the
corresponding constants $\sigma_{Z,r}$ were found for the ordinary rank
of three-tensors.  Surprising progress on de-bordering Waring rank was
recently made in \cite{dgijl}: there, an upper bound is established on
the Waring rank of a form that is exponential in its border rank but
{\em linear} in $d$.

For strength, the first theorem can also be proved in a different
manner---but this uses a special property of strength, namely, its
universality among all notions of rank. For details, see \cite{bdlz}. 

Beyond the linear regime and the first theorem for the special case
of strength, the two theorems above are entirely new. Unfortunately,
our methods do not give effective bounds for the function $\Phi$ or
the numbers $\sigma_{Z,r}$. We hope that our paper will lead to further
research on deriving such bounds.

\subsection{$\GL$-varieties}

We now turn to stating our main results. For this, we require the notion of $\GL$-variety, which we now recall.
Let $\GL=\bigcup_{n \ge 1} \GL_n(K)$ be the infinite general linear group, and let $\bV=\bigcup_{n \ge 1} K^n$. Let $\bV_{\lambda}$
be the irreducible polynomial representation of $\GL$ corresponding to the partition
$\lambda$, i.e., $\bV_{\lambda}=\bS_{\lambda}(\bV)$, where $\bS_{\lambda}$ is the Schur
functor. Let $\bA^{\lambda}$ be the affine scheme $\Spec(\Sym(\bV_{\lambda}))$. For example,
if $\lambda=(d)$ then $\bA^{\lambda}$ is the space of symmetric $d$-forms on $\bV$. For a tuple of partitions $\ulambda=[\lambda_1, \ldots, \lambda_r]$, we let $\bA^{\ulambda}=\bA^{\lambda_1} \times \cdots \times \bA^{\lambda_r}$.

An affine \defn{$\GL$-variety} is a closed $\GL$-stable reduced subscheme of $\bA^{\ulambda}$. The $\bA^{\ulambda}$ themselves are the basic examples of $\GL$-varieties. 
Any finite dimensional variety can be regarded as a $\GL$-variety with trivial $\GL$ action.
A more interesting example: the border strength $\le r$ locus in $\bA^{(d)}$ is a $\GL$-variety.

\subsection{Existence of curves}

Curves are plentiful in classical algebraic geometry. For
example, any two points on an irreducible variety can be
joined by a curve; this is a fairly easy---yet extremely
important---property. In infinite dimensional algebraic
geometry, curves can be harder to come by. (For us, a
``curve'' must be finite type over a field.) Indeed, it is
not difficult to construct a closed subscheme of
$\bA^{\infty}$ admitting no non-constant maps from any
curve; see Example~\ref{ex:no-curve}. Thus one needs
non-trivial theorems to construct curves on $\GL$-varieties.

An \defn{elementary $\GL$-variety} is one of the form $B
\times \bA^{\ulambda}$, where $B$ is a finite dimensional
variety. Since $\bA^{\ulambda}$ is a linear space, it is
easy to construct curves in it: one can just draw straight
lines. It is therefore easy to construct curves in
elementary $\GL$-varieties (see Lemma~\ref{lem:elem-curve}
for details). One of the main results from \cite{bdes} is
the \defn{decomposition theorem}, which asserts that every
$\GL$-variety admits a finite stratification by
``locally elementary'' varieties. Thus each stratum will contain
plenty of curves.

The decomposition theorem (and indeed, \cite{bdes} in general) gives no information about how the strata are glued, and there could potentially be pathological behavior. For example, suppose $X$ is a $\GL$-variety admitting a locally elementary stratification
\begin{displaymath}
X = 0 \sqcup V \sqcup U,
\end{displaymath}
where 0 is a closed point, $0 \sqcup V$ is the closure of $V$, and $U$ is open. It is conceivable that every curve contained in $U$ could limit on~0, i.e., there are no curves connecting a point in $U$ to a point in $V$. Our first main theorem shows that this does not happen (see Theorem~\ref{thm:curve} for a more precise formulation).

\begin{theorem} \label{mainthm0}
Let $X$ be an irreducible $\GL$-variety, let $U$ be a non-empty open subset of $X$, and let $x$ be a point of $X$. Then there is an irreducible smooth curve $C$ and a map $i \colon C \to X$ such that $\im(i)$ contains $x$ and meets $U$.
\end{theorem}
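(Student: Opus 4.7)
The plan is to use the decomposition theorem of \cite{bdes} to reduce to the elementary case, where curves can be produced directly by combining classical curves in the finite-dimensional base with affine lines in the $\bA^{\ulambda}$-factor, as in Lemma~\ref{lem:elem-curve}.

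First I would apply the decomposition theorem to stratify $X$ as a finite disjoint union of locally elementary $\GL$-varieties. Since $X$ is irreducible, one stratum $S_0$ is open and dense, so $U \cap S_0$ is nonempty. The easiest case is when $x \in S_0$: there $X$ has a neighborhood of the form $B \times \bA^{\umu}$ with $B$ finite-dimensional, and writing $x = (b_0, a_0)$, one builds a map $i \colon C \to X$ of the shape $c \mapsto (b(c),\ a_0 + f(c)\,v)$, where $b \colon C \to B$ is a classical smooth curve through $b_0$, $f \colon C \to K$ is a regular function vanishing at a chosen point $c_0 \in C$, and $v \in \bA^{\umu}$ is generic. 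Because $U$ is open, $\GL$-stable, and nonempty, genericity of $v$ (together with a suitable choice of $b$ and $f$, using classical algebraic geometry on $B$) forces the image to meet $U$.

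The main difficulty is the case where $x$ lies in a lower stratum $S_j$ whose closure is contained in $X \setminus S_0$: here no curve confined to $\ol{S_j}$ can reach $U$, so the curve must ``leave'' the boundary. My approach is to produce a $\GL$-equivariant map $\phi \colon Y \to X$ from an elementary $\GL$-variety $Y = B \times \bA^{\umu}$ such that $x \in \phi(Y)$ and $\phi^{-1}(U)$ is nonempty and $\GL$-stable open in $Y$. Then I lift $x$ to some $y \in Y$, apply the elementary construction in $Y$ to obtain a curve $C \to Y$ through $y$ meeting $\phi^{-1}(U)$, compose with $\phi$, and take the normalization of the image to obtain the desired smooth irreducible curve.

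The technical heart of the argument is the existence of such a map $\phi$ when $x$ is in the boundary: the decomposition theorem supplies elementary structure only stratum by stratum, and one needs to ``enlarge'' the locally elementary chart around $x$ so as to simultaneously see a point of the open dense stratum. I expect this step to be the main obstacle, and that it requires a careful use of the $\GL$-action together with the structural results of \cite{bdes} -- for instance, by pulling back an elementary chart of the generic stratum through a point of $\ol{S_j}$ along a suitable $\GL$-equivariant degeneration. Once $\phi$ is built, the pushforward and normalization steps are routine.
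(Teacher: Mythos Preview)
Your overall architecture matches the paper's exactly: reduce to a dominant map $\phi \colon B \times \bA^{\umu} \to X$ from an elementary $\GL$-variety whose image contains $x$, lift $x$, connect to a lift of a point in $\phi^{-1}(U)$ via Lemma~\ref{lem:elem-curve}, and push forward. So you have correctly isolated the problem.

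The gap is precisely where you say it is, and it is the entire content of the theorem. You write that producing $\phi$ ``requires a careful use of the $\GL$-action together with the structural results of \cite{bdes}'', but this is not so: the paper is explicit that the decomposition theorem, and indeed all of \cite{bdes}, gives no information about how strata are glued. Your suggested mechanism---``pulling back an elementary chart of the generic stratum through a point of $\ol{S_j}$ along a suitable $\GL$-equivariant degeneration''---is circular, since producing such a degeneration is essentially the curve you are trying to build. The actual construction of $\phi$ (Corollary~\ref{cor:uni-pair}) occupies \S\ref{s:shift}--\S\ref{s:uni-pair} and rests on two ideas absent from your sketch: first, a shift theorem for a pair $Z \subset X$ with $Z$ of codimension one (Theorem~\ref{thm:cdone}), whose proof hinges on the nontrivial fact that every subalgebra of a one-dimensional finitely generated algebra is finitely generated (Theorem~\ref{thm:subalg}); second, a blow-up of $X$ along the orbit closure of $x$ to reduce arbitrary codimension to codimension one (Lemma~\ref{lem:uni-pair-2}). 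Neither step is a routine application of \cite{bdes}.

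A minor point: in your ``easy'' case you appeal to genericity of $v$ to force the curve into $U$. This can be made to work, but it is cleaner (and is what the paper does) to simply pick a specific point $y \in U$ in the same elementary chart and connect $x$ to $y$ directly; no genericity argument is needed.
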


This theorem provides control on how the strata are glued,
which is really the main achievement of this paper. The
proof of this result is harder (and much more geometric)
than the results of \cite{bdes}. In light of Theorem~\ref{mainthm0}, one might ask if any two points in an irreducible $\GL$-variety can be joined by an irreducible curve. In \cite{bdes2}, we will show that this is indeed the case. The proof builds on the results here, but requires some new elements as well.

\subsection{Image closures} \label{ss:intro-img}

Let $\phi \colon \bA^n \to \bA^m$ be a map of finite dimensional affine spaces. An important
basic result in algebraic geometry states that points in the closure of $\im(\phi)$ can be
described as 1-parameter limits: if $x \in \ol{\im(\phi)}$, then we can write
\begin{equation} \label{eq:limit}
x = \lim_{t \to 0} \phi(t^{-a} y_{-a} + t^{-a+1} y_{-a+1} + \cdots )
\end{equation}
for some integer $a$ and $y_{-a}, y_{-a+1}, \ldots \in
\bA^n$.  The limit notation above really means that the quantity inside
the limit is a polynomial function of $t$, and we are evaluating it
at~0. Our second main theorem establishes an analog of this for $\GL$-varieties:

\begin{theorem} \label{mainthm1}
Let $\phi \colon \bA^{\umu} \to \bA^{\ulambda}$ be a map of $\GL$-varieties. Suppose $x$ is a $K$-point of $\ol{\im(\phi)}$. Then we have
\begin{displaymath}
x = \lim_{t \to 0} \phi(t^{-a} y_{-a} + t^{-a+1} y_{-a+1} + \cdots)
\end{displaymath}
for $K$-points $y_a, y_{a+1}, \ldots$ of $\bA^{\umu}$. Moreover, there exists $N \ge 0$, depending only on $\phi$, such that we can take $0 \le a \le N$ for all $x$.
\end{theorem}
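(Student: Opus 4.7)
The plan is to combine the curve-selection theorem (Theorem~\ref{mainthm0}), a generic lifting argument, and the decomposition theorem from \cite{bdes}. By the noetherianity of $\GL$-varieties, $Y := \ol{\im(\phi)}$ has finitely many $\GL$-irreducible components, so we fix one containing $x$ and reduce to the case that $Y$ itself is $\GL$-irreducible. The Chevalley theorem of \cite{bdes} implies that $\im(\phi)$ is $\GL$-constructible, hence contains a nonempty $\GL$-stable open $U \subseteq Y$. Apply Theorem~\ref{mainthm0} to $(Y, U, x)$ to obtain a smooth irreducible curve $C$ and a morphism $i \colon C \to Y$ whose image meets $U$ and contains $x$. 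Fix $p \in C$ with $i(p) = x$; since $i(C)$ is irreducible and meets the open set $U$, the generic point $\eta$ of $C$ satisfies $i(\eta) \in U$.

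Next, lift the generic point. Because $\phi^{-1}(U) \to U$ is dominant, after passing to a finite cover $\pi \colon C' \to C$ and choosing $p' \in \pi^{-1}(p)$, we produce $\tilde\eta \in \bA^{\umu}(K(C'))$ whose image under $\phi$ equals the image of the generic point of $C'$ under $i \circ \pi$. The completion of $\cO_{C',p'}$ is isomorphic to $K[[t]]$, with fraction field $K((t))$ containing $K(C')$. Viewing $\tilde\eta$ as a $K((t))$-point of $\bA^{\umu}$, the composition $\phi(\tilde\eta)$ extends to a $K[[t]]$-point of $\bA^{\ulambda}$ (because the morphism $C' \to Y$ is defined at $p'$) whose specialization at $t = 0$ equals $x$. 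If $\tilde\eta$ has finite pole order $a$, so that $\tilde\eta = \sum_{k \ge -a} y_k t^k$ with $y_k \in \bA^{\umu}(K)$, then this yields the required expression.

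The main obstacle is controlling the pole order. Because $\bA^{\umu}$ is infinite-dimensional, an arbitrary $K((t))$-point has no reason to admit a uniform bound on the pole orders of its infinitely many coordinates. The strategy is to modify the lift by a carefully chosen element $g(t) \in \GL(K((t)))$: by $\GL$-equivariance, $\phi(g(t) \cdot \tilde\eta) = g(t) \cdot \phi(\tilde\eta)$, and one arranges that $g(t) \cdot \tilde\eta$ has finite pole order while $g(t) \cdot \phi(\tilde\eta)$ remains a $K[[t]]$-point specializing to $x$. Constructing $g(t)$ uses the decomposition theorem of \cite{bdes}, which stratifies $Y$ into finitely many locally elementary pieces of the form $B \times \bA^{\ulambda''}$; over each stratum, $\phi$ acquires an explicit local model splitting into a finite-dimensional morphism on the $B$-factor and a linear projection on the $\bA^{\ulambda''}$-factor, for which bounded-pole-order lifts can be produced directly.

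Finally, the uniform bound $N$ arises as the maximum of the bounds obtained over the finitely many strata, so it depends only on $\phi$ and not on $x$. The construction of the modification $g(t)$, uniformly across strata, is the hardest step: it concentrates all the infinite-dimensional content of the theorem, and links the geometric input of Theorem~\ref{mainthm0} to the structural input of the decomposition theorem.
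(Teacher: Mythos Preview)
Your existence argument is close to the paper's. The paper likewise reduces to $Y=\ol{\im(\phi)}$ irreducible, uses Chevalley to find an open $U\subseteq\im(\phi)$, applies the curve theorem to connect $x$ to $U$, completes at the chosen point to get an $\Omega'\lbb t\rbb$-point of $Y$, and then lifts the generic point. However, your mechanism for forcing bounded pole order is not quite right. Modifying by an element $g(t)\in\GL(K\lpp t\rpp)$ cannot do what you want: any such $g(t)$ lies in some $\GL_n(K\lpp t\rpp)$ and so moves only finitely many coordinates, while an unbounded $K\lpp t\rpp$-point of $\bA^{\umu}$ has infinitely many coordinates with growing pole order; moreover, to keep $g(t)\cdot\phi(\tilde\eta)$ in $K\lbb t\rbb$ with value $x$ at $t=0$ you would need $g(0)=\id$, which further limits what $g(t)$ can repair. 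The paper does not modify a bad lift; it chooses a good one. Its Proposition on bounded lifts says directly that a bounded $\Omega\lpp t\rpp$-point in the image lifts to a bounded $\Omega'\lpp t^{1/n}\rpp$-point of the source, with the one-line proof ``clear for elementary maps, general case by the decomposition theorem.'' Your last sentence in that paragraph (``over each stratum \dots bounded-pole-order lifts can be produced directly'') is this correct idea, but it is not a $g(t)$-modification.

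The genuine gap is in your uniformity argument. Taking ``the maximum of the bounds obtained over the finitely many strata'' does not yield a bound independent of $x$: in your construction the curve $C$, the cover $C'\to C$, the point $p'$, and hence the local uniformizer and the pole order of the lift all depend on $x$. Nothing in the decomposition theorem controls, for instance, the order of contact of the curve with the boundary of $U$, and that order feeds directly into the pole order of the lift in the elementary model. The paper obtains uniformity by an entirely different, soft argument. It introduces, for each $n$ (and $m\gg 0$), the $\GL$-variety $\fL_{n,m}\subset L_{n,m}\otimes\bA^{\umu}$ of Laurent inputs whose image under $\phi$ has no negative powers of $t$, together with the evaluation-at-$0$ map $\phi:\fL_{n,m}\to\bA^{\ulambda}$; its image $K_n$ is exactly the set of points realizable with denominator $t^{-n}$. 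By Chevalley for $\GL$-varieties, each $K_n$ is $\GL$-constructible; the existence half shows $\ol{\im(\phi)}=\bigcup_n K_n$; and then noetherianity of $\GL$-varieties forces $K_N=\ol{\im(\phi)}$ for some $N$. That constructibility-plus-noetherianity step is the missing idea in your proposal.
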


This theorem really contains two separate results. The first is that every point of $\ol{\im(\phi)}$ can be realized as a limit; this is Theorem~\ref{thm:limit}. The second result is the uniformity, i.e., $a$ can be controlled independently of $x$; this is Theorem~\ref{thm:Uniformity}. We deduce our result on border strength (Theorem~\ref{thm:UniformLimitSymd}) from Theorem~\ref{mainthm1}.

It can be useful to recast the above discussion into somewhat more abstract (``valuative'') terms. We explain the basic idea here, because there is one key point to emphasize. First suppose that $\phi \colon \bA^n \to \bA^m$ is a map of finite dimensional affine spaces and $x \in \ol{\im(\phi)}$. The equation \eqref{eq:limit} can be formulated as follows: there exists a $K\lpp t \rpp$-point $y$ of $\bA^n$ such that $x=\lim_{t \to 0} \phi(y(t))$.

Now suppose that $\phi \colon \bA^{\umu} \to \bA^{\ulambda}$ is a map of $\GL$-varieties, and $x \in \ol{\im(\phi)}$. Then Theorem~\ref{mainthm1} shows that there is a $K\lpp t \rpp$-point $y$ of $\bA^{\umu}$ such that $y=\lim_{t \to 0} \phi(y(t))$. However, the point $y$ has an important property: its denominators are bounded. Since $\bA^{\umu}$ has infinitely many coordinates, a general $K\lpp t \rpp$-point can have unbounded denominators. We define a notion of bounded $K\lpp t \rpp$-point for arbitrary $\GL$-varieties, and use this to formulate our general result on image closures.

\subsection{The Krull intersection theorem}

We mention one more application of our results. A \defn{$\GL$-algebra} is an algebra in the category of polynomial representations of $\GL$; see \S \ref{ss:glvar} for details and additional terminology. The basic example of such an algebra is the coordinate ring of a $\GL$-variety. Using Theorem~\ref{mainthm0}, we establish the following version of the Krull intersection theorem in this context:

\begin{theorem} \label{thm:krull}
Let $A$ be a $\GL$-algebra that is reduced and finitely $\GL$-generated, and let $\fp$ be a $\GL$-stable prime ideal of $A$. Then $\bigcap_{n \ge 1} \fp^n A_{\fp}=0$.
\end{theorem}

A major open problem is to show that a finitely $\GL$-generated $\GL$-algebra is noetherian, in the sense that the ascending chain condition holds for $\GL$-stable ideals. The second author \cite{draisma} proved topological noetherianity, i.e., ACC holds for radical $\GL$-stable ideals; this is a powerful result, but much weaker than true noetherianity. Theorem~\ref{thm:krull} could potentially be an important tool in establishing noetherianity. In fact, Theorem~\ref{thm:krull} has a more immediate application too: it plays an important role in \cite{DS}, which studies the singular locus of a $\GL$-variety.

\subsection{Overview of the proofs}

We now outline the proofs of our main theorems. In what follows, $X$ is an irreducible affine $\GL$-variety.

\textit{The shift theorem for pairs.} A fundamental result
from \cite{bdes} is the shift theorem. This asserts that $X$
contains a non-empty open subvariety $U$, preserved by a
shifted $\GL$-action that makes $U$ an elementary
$\GL$-variety. This is a very useful result since it provides an open subvariety with a very simple structure. Unfortunately, this result provides no information on $U$. In particular, if $Z$ is a given proper closed subvariety, one cannot ensure $Z \cap U$ is non-empty.

The first important result we prove here is a shift theorem for pairs (Theorem~\ref{thm:cdone}): given a closed $\GL$-subvariety $Z \subset X$ of codimension one, we can find $U$ as above such that $U \cap Z$ is non-empty and locally elementary. To prove this, we first construct a map of $\GL$-varieties $X \to Y$ such that $Z \to Y$ is dominant of relative dimension~0. We then construct $U$ simply by shrinking $Y$, which ensures that we do not discard all of $Z$. We use an important finiteness property of curves in this argument
(Theorem~\ref{thm:subalg}), which is why it is important that $Z$ has codimension one.

\textit{Unirationality for pairs.} Another important result from \cite{bdes} is the unirationality theorem. This asserts that if $X$ is an irreducible $\GL$-variety then there is a dominant morphism $\phi \colon B \times \bA^{\ulambda} \to X$ for some finite dimensional irreducible variety $B$ and tuple $\ulambda$. As with the shift theorem, this theorem provides no information on $\im(\phi)$, e.g., one cannot ensure $\im(\phi)$ meets a given closed subset of $X$.

The second important result we prove here is a unirationality theorem for pairs (Theorem~\ref{thm:uni-pair}). Roughly, this says that if $Z$ is a given closed $\GL$-subvariety of $X$ then one can find $\phi$ as in the previous paragraph that restricts to a similar such map over $Z$; in particular, $\im(\phi)$ contains a dense open subset of $Z$. When $Z$ has codimension one, this follows immediately from the shift theorem for pairs. In general, we blow-up along $Z$ to reduce to the codimension one case. To carry this out, we develop basic aspects of blow-ups for $\GL$-varieties, without aiming for generality.

The unirationality theorem for pairs can be viewed as the core technical result in this paper: it implies that the closed $\GL$-subvariety $Z$ is not glued to its complement in a pathological way, and this is what allows us to get control over the stratification in the decomposition theorem.

\textit{Proof sketch of Theorem~\ref{mainthm0}.} Let $x \in X$ and $U \subset X$ be given as in the theorem. Let $\phi \colon B \times \bA^{\ulambda} \to X$ be a dominant map with $x \in \im(\phi)$; this can be constructed using the unirationality theorem for pairs by taking $Z$ to be the orbit closure of $x$. By Chevalley's theorem for $\GL$-varieties (proved in \cite{bdes}), $\im(\phi)$ contains a dense open subset of $X$, and thus meets $U$; let $y$ belong to $\im(\phi) \cap U$. Now simply choose pre-images of $x$ and $y$, connect these by a curve in $B \times \bA^{\ulambda}$ (which is easy), and map this curve to $X$ via $\phi$.

\textit{Proof sketch of Theorem~\ref{mainthm1}.} Let $X \subset \bA^{\ulambda}$ be the image closure of $\phi$, and let $x \in X$ be given. Let $U$ be an open subset of $X$ contained in $\im(\phi)$, which exists by Chevalley's theorem for $\GL$-varieties. Find a curve joining $x$ to $U$. Replacing this curve with its completion at a point mapping to $x$, the generic point is a $K\lpp t \rpp$-point $\gamma(t)$ of $U$ such that $x=\lim_{t \to 0} \gamma(t)$. Since $U \subset \im(\phi)$, we can lift $\gamma$ to a $K\lpp t^{1/n} \rpp$-point $\tilde{\gamma}(t)$ of $\bA^{\umu}$ with bounded denominators; this is not obvious, but follows from from the decomposition theorem. We thus have $x=\lim_{t \to 0} \phi(\tilde{\gamma}(t))$, which realizes $x$ as a limit.

We now explain the uniformity aspect. The above argument implies that every point of $X$ can be realized as a limit along a curve in $\bA^{\umu}$. Consider a family of curves in $\bA^{\umu}$ over a finite dimensional base variety. Using Chevalley's theorem for $\GL$-varieties, we show that the locus in $X$ that can be realized as a limit using a curve from this family is a constructible set. Since $X$ is the union of these constructible sets, it is the union of some finite collection of them, and this yields uniformity. (This discussion is greatly oversimplified; see \S \ref{s:app} for details.)

\subsection{Connections to other work}

Theorem~\ref{mainthm1} is used in \cite{bdv}, in which a
theoretical algorithm is derived for computing the image closure of
a morphism between $\GL$-varieties. In that algorithm, an important
subroutine decides whether, for two morphisms $\alpha,\alpha'$
into the the same $\GL$-variety, $\im(\alpha')$ is contained in
$\overline{\im(\alpha)}$. A no-instance of this subroutine can be
certified with an equation for $\overline{\im(\alpha)}$ that does
not vanish on $\im(\alpha')$, but a certificate for a yes-instance is
much more subtle and involves (a computer representation of) 
limits as in 
Theorem~\ref{mainthm1}. Theorem~\ref{mainthm1} is also an important input
to \cite{bdes2}, which proves an improved form of the unirationality
theorem.

\subsection{Notation}

We typically denote ordinary, finite dimensional varieties by $B$ or $C$, and $\GL$-varieties by $X$ or $Y$. Other important notation:
\begin{description}[align=right,labelwidth=2cm,leftmargin=!]
\item [$K$] the base field, of characteristic~0
\item [$\Omega$] an extension of $K$
\item [$\GL$] the infinite general linear group
\item [$\ulambda$] a tuple of partitions $[\lambda_1, \ldots, \lambda_r]$
\item [$\bA^{\ulambda}$] the basic affine $\GL$-variety
\item [{$K[X]$}] the coordinate ring of the affine scheme $X/K$
\end{description}
A \defn{variety} over a field $\Omega$ is a reduced scheme that is separated and of finite type over $\Omega$. Most varieties in this paper are affine. A \defn{curve} is a
one-dimensional variety; we do not require curves to be smooth or irreducible, unless explicitly stated.

\subsection*{Acknowledgements}

We thank Christopher Heng Chiu for helpful conversations, and for providing a reference for Theorem~\ref{thm:subalg}. 

\section{Background on \texorpdfstring{$\GL$}{GL}-varieties} \label{s:bg}

\subsection{\texorpdfstring{$\GL$}{GL}-varieties} \label{ss:glvar}

Fix a field $K$ of characteristic~0. Let $\GL=\bigcup_{n \ge 1}
\GL_n(K)$ be the infinite general linear group, and let
$\bV=\bigcup_{n \ge 1} K^n$ be its standard representation. For a
partition $\lambda$, we let $\bV_{\lambda}=\bS_{\lambda}(\bV)$ be the
irreducible polynomial representation of $\GL$ with highest weight
$\lambda$; here $\bS_{\lambda}$ denotes a Schur functor. For example,
$\bV_{(d)}$ is the $d$th symmetric power of $\bV$. We let
$\bA^{\lambda}$ be the spectrum of the polynomial ring
$\Sym(\bV_{\lambda})$; this is an affine scheme equipped with an
action of the group $\GL$. For a tuple of partitions
$\ulambda=[\lambda_1, \ldots, \lambda_r]$, we write
$\bV_{\ulambda}=\bV_{\lambda_1}\oplus\cdots\oplus\bV_{\lambda_r}$ and let
$\bA^{\ulambda}=\bA^{\lambda_1} \times \cdots \times \bA^{\lambda_r}$ be the spectrum of $\Sym(\bV_{\ulambda})$.
The role of the spaces $\bA^{\ulambda}$ in our theory is analogous to
the role of the usual affine spaces $\bA^n$ in classical algebraic
geometry.

A \defn{$\GL$-algebra} is a $K$-algebra equipped with an action of $\GL$
by automorphisms that makes it into a polynomial $\GL$-representation.
We say that it is \defn{finitely $\GL$-generated} if it is generated
by the $\GL$-orbits of finitely many elements; equivalently, if it
is isomorphic to a quotient of some $\Sym(\bV_{\ulambda})$ by some
$\GL$-stable ideal. An \defn{affine $\GL$-scheme} is the spectrum of
a $\GL$-algebra.  An \defn{affine $\GL$-variety} is a reduced affine
$\GL$-scheme whose coordinate ring is finitely $\GL$-generated;
equivalently, it is a $\GL$-scheme isomorphic to a reduced closed
subscheme of some $\bA^{\ulambda}$. A \defn{quasi-affine $\GL$-variety} (or
\defn{$\GL$-variety}, for short), is a $\GL$-stable open subscheme of an affine
$\GL$-variety. A \defn{morphism of $\GL$-varieties} is a $\GL$-equivariant morphism of
varieties. 

Every $\GL$-algebra $R$ has a natural grading $R=\bigoplus_{d \geq 0}
R_d$, in which $R_d$ consists of all $f \in R$ with the property that,
for all $n \gg 0$, the element $g:=t\cdot \id \in \GL_n(K)$ satisfies
$gf=t^d f$. This grading is called the {\em central grading} of $R$.
For example, if $R = \Sym(\bV_{\lambda})$ for a partition $\lambda$,
then the elements of $\bV_{\lambda}$ have degree
$|\lambda|$, the number of which $\lambda$ is a partition.

We note that the central grading of $R$ gives rise to an action of $K^\times$ on $R$
and on $\Spec(R)$ that does not factor through a group homomorphism
$K^\times \to \GL$: $t$ times the infinite identity matrix is not an
element of $\GL$.

\subsection{$\GL$-varieties as functors} \label{ss:functorial}

For an integer $n\geq0$, the inclusion $K^n\to\bV$ yields a surjective morphism 
\[
\bA^{\ulambda}\to\Spec(\Sym(\bS_{\lambda_1}(K^n)\oplus\cdots\oplus\bS_{\lambda_r}(K^n)))=:\bA^{\ulambda}\{K^n\},
\]
which has a section corresponding to the natural surjection
$\bV \to K^n$.

If $X$ is a closed $\GL$-subvariety of $\bA^{\ulambda}$, then we
write $X\{K^n\}$ for the image of $X$ in $\bA^{\ulambda}\{K^n\}$;
this is a $\GL_n(K)$-stable closed subvariety, and the section
$\bA^{\ulambda}\{K^n\} \to \bA^{\ulambda}$ restricts to a
$\GL_n(K)$-equivariant morphism from $X\{K^n\}$ into $X$.

More generally, by exploiting the Schur functors in the definition
of $\GL$-varieties, one can regard an affine $\GL$-variety $X$ as a
functor from the category of finite-dimensional vector spaces over $K$
to the category of affine varieties over $K$; this is the viewpoint
taken in \cite{draisma}. Accordingly, we write $X\{V\}$ when $V$ is a
finite-dimensional vector space over $K$, and similarly for morphisms.
One difference with \cite{draisma} is that since $\bS_{\lambda}$ is a
covariant functor, $\bA^{\ulambda}$ are naturally contravariant in $X$.

\subsection{Shifting}

For an integer $n\geq0$, let $G(n)$ be the subgroup of $\GL$ consisting of block matrices of the form
\begin{displaymath}
\begin{pmatrix} \id_n & 0 \\ 0 & \ast \end{pmatrix}.
\end{displaymath}
The group $G(n)$ is in fact isomorphic to $\GL$. Thus if $X$ is a $\GL$-variety, we can restrict the action of $\GL$ to $G(n)$ and then identify $G(n)$ with $\GL$ to obtain a new action of $\GL$. We call this the $n$th \defn{shift} of $X$, and denote it by $\Sh_n(X)$. 

\begin{theorem}[{\cite[Theorem 5.1]{bdes}}]\label{thm:shift}
Let $X$ be a non-empty affine $\GL$-variety. Then there is a non-empty open affine $\GL$-subvariety of $\Sh_n(X)$, for some $n$, that is isomorphic to $B \times \bA^{\ulambda}$ for some variety $B$ and tuple $\ulambda$.
\end{theorem}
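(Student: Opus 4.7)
My plan is to prove this by Noetherian induction on an invariant of the pair $(X \hookrightarrow \bA^{\ulambda})$. I would fix any $\GL$-equivariant closed embedding of $X$ into some $\bA^{\ulambda}$, and order tuples $\ulambda$ by the multiset of sizes $|\lambda_i|$ under the multiset extension of the natural order on $\bN$; in this order, replacing one factor $\bA^{\lambda}$ by any finite number of factors $\bA^{\mu}$ with $|\mu| < |\lambda|$ strictly decreases the invariant. If $X = \bA^{\ulambda}$, the conclusion holds trivially with $B = \Spec K$ and $n = 0$, so I would assume $X \subsetneq \bA^{\ulambda}$.

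For the inductive step I would pick a nonzero $\GL$-equivariant $f$ in the ideal of $X$ lying in an irreducible subrepresentation $\bV_{\mu} \subseteq \Sym(\bV_{\ulambda})$ with $|\mu|$ minimal among all such choices. After shifting by some $n \gg 0$, the branching rules for Schur functors decompose each $\Sh_n(\bV_{\lambda})$, as a $G(n) \cong \GL$-representation, as a direct sum of irreducibles $\bV_{\nu}$ with $|\nu| \le |\lambda|$, and every partition $\nu$ with $|\nu| < |\lambda|$ appears with multiplicity growing in $n$. Expanding $\Sh_n(f)$ in this decomposition, the ``derivatives'' of $\Sh_n(f)$ with respect to the new low-type shift variables are $\GL$-equivariant elements of the ideal of $\Sh_n(X)$ whose type is strictly smaller than $|\mu|$; by minimality of $|\mu|$ these derivatives cannot all vanish identically modulo the ideal, so they cut out a proper subvariety. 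Using such a derivative I would perform an equivariant linear change of coordinates on a $G(n)$-stable open subset $U \subseteq \Sh_n(X)$ to exhibit $U \cong X' \times \bA^{\unu}$ for some $X' \subseteq \bA^{\ulambda'}$ with $\ulambda'$ strictly smaller in the well-ordering; applying the inductive hypothesis to $X'$ (possibly after a further shift) and combining with the free $\bA^{\unu}$-factor yields the desired elementary form $B \times \bA^{\ulambda''}$ on an open subset of $\Sh_{n+n'}(X)$.

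The hard part will be carrying out the splitting $U \cong X' \times \bA^{\unu}$ in a genuinely $G(n)$-equivariant way: one needs to locate a $G(n)$-stable complement to the subrepresentation cut out by the chosen derivatives, verify that a translate of $f$ can be solved for one of the new low-type coordinates as a polynomial in the remaining ones, and confirm that the resulting $\ulambda'$ is strictly smaller in the well-ordering. This delicate representation-theoretic bookkeeping, combined with the need to iterate (each application of the inductive hypothesis may require its own shift), is what makes the theorem substantial and explains why multiplicities growing with $n$ are essential in the argument.
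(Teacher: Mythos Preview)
The present paper does not prove this theorem; it is quoted from \cite{bdes}. Your overall plan---well-founded induction on the multiset $\{|\lambda_i|\}$, pick an equation $f$ of minimal type in the ideal, shift, and use a nonvanishing coefficient of $f$ to cut down the ambient tuple---is exactly the strategy of \cite{bdes} (and \cite{draisma} before it), so the framework is right.

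The inductive step as you describe it, however, does not make progress. You propose to differentiate $f$ with respect to the \emph{new low-type} shift variables and then solve for one of those. But removing a low-type factor from the tuple underlying $\Sh_n(\bA^{\ulambda})$ does not drop below $\ulambda$ in your order: each $\Sh_n(\bV_{\lambda_i})$ still contains a copy of $\bV_{\lambda_i}$ at the top, so after shifting every original $|\lambda_i|$ is still present in the multiset, and deleting one of the newly added small entries leaves a multiset that still contains $\{|\lambda_i|\}$, hence is $\succeq \ulambda$. What the argument in \cite{bdes} does instead is work with the coordinates of a \emph{top-degree} factor: one finds an $h$ of strictly smaller central degree than $|\mu|$ (a coefficient or partial derivative of $f$ with respect to those top coordinates), which by minimality lies \emph{outside} the ideal---this also corrects your slip of calling the derivatives ``elements of the ideal of $\Sh_n(X)$'', since the whole point is that they are not---and after inverting $h$ one eliminates a top-type summand $G(n)$-equivariantly. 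Removing a maximal-size entry while adding only strictly smaller ones is precisely what decreases your multiset invariant, and the induction then goes through.
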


We call a tuple $\ulambda$ pure when it does not contain the empty partition. Note that every
tuple $\ulambda$ is the concatenation of a tuple of $m\geq0$ empty partitions and a pure
tuple $\umu$. We have $\bA^{\ulambda}=\bA^m\times\bA^{\umu}$. By replacing $B$ 
by $B \times \bA^m$, we can always assume the tuple $\ulambda$ in the above theorem to be pure.

\subsection{Decomposition theorem}

We say that a $\GL$-variety $X$ is \defn{elementary} if it is isomorphic to a $\GL$-variety of the form $B \times \bA^{\ulambda}$ for some irreducible affine variety $B$ and tuple $\ulambda$; this implies that $X$ is irreducible and affine. Let $\phi \colon X \to Y$ be a morphism of $\GL$-varieties. We say that $\phi$ is \defn{elementary} if there exists a commutative diagram
\begin{equation} \label{eq:Diagram}
\begin{gathered}
\xymatrix@C=5em{
X \ar[r]^{\phi} \ar[d]_-i & Y \ar[d]^-j \\
B \times \bA^{\ulambda} \ar[r]^{\psi \times \pi} & C \times \bA^{\umu} }
\end{gathered}
\end{equation}
where $i$ and $j$ are isomorphisms of $\GL$-varieties, $B$ and $C$ are irreducible affine varieties, $\psi \colon B \to C$ is a surjective morphism of varieties, $\ulambda$ and $\umu$ are pure tuples with $\umu \subseteq \ulambda$, and $\pi \colon \bA^{\ulambda} \to \bA^{\umu}$ is the projection map. This implies that $X$ and $Y$ are elementary and that $\phi$ is surjective. Note that a $\GL$-variety is elementary if and only if its identity map is. We say that a $G(n)$-variety, or morphism of $G(n)$-varieties, is \defn{$G(n)$-elementary} if it is so after identifying $G(n)$ with $\GL$.

We say that a quasi-affine $\GL$-variety $X$ is \defn{locally elementary at $x \in X$} if there exists $n$ and a $G(n)$-stable open neighborhood of $x$ that is $G(n)$-elementary; we say that $X$ is \defn{locally elementary} if it is so at all points. Let $\phi \colon X \to Y$ be a morphism of $\GL$-varieties. We say that $\phi$ is \defn{locally elementary at $x \in X$} if there exists $n$ and $G(n)$-stable open neighborhoods $U$ of $x$ and $V$ of $\phi(x)$ such that $\phi$ induces a $G(n)$-elementary morphism $U \to V$. We say that $\phi$ is \defn{locally elementary} if it is surjective and locally elementary at all $x \in X$. This implies that both $X$ and $Y$ are locally elementary. We note that a $\GL$-variety is locally elementary if and only if its identity morphism is. We again define a $G(n)$-variety, or a morphism of $G(n)$-varieties, to be \defn{locally $G(n)$-elementary} if it is so after identifying $G(n)$ with $\GL$.

\begin{theorem}[{\cite[Theorem 7.8]{bdes}}]\label{thm:decomp}
Every morphism of $\GL$-varieties admits a locally elementary decomposition.
\end{theorem}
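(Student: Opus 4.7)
My plan is to proceed by Noetherian induction on $X$, using the shift theorem (Theorem~\ref{thm:shift}) as the main structural input at each stage. It suffices to treat a dominant morphism $\phi \colon X \to Y$ with $X$ and $Y$ irreducible affine $\GL$-varieties: the general case follows by decomposing $X$ into its finitely many $\GL$-irreducible components and replacing $Y$ by the $\GL$-closure of the image of each.

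The key local claim to establish is that there exist an integer $n$ and non-empty $G(n)$-stable open subvarieties $U \subseteq X$ and $V \subseteq Y$ with $\phi(U) \subseteq V$ such that $\phi|_U \colon U \to V$ is $G(n)$-elementary. To produce $V$, I would apply Theorem~\ref{thm:shift} to $Y$ and obtain, after shifting by some $m$, an open subvariety of $\Sh_m(Y)$ of the form $C \times \bA^{\umu}$ with $\umu$ pure. Pulling back via $\phi$ yields a non-empty $G(m)$-stable open preimage in $X$; applying the shift theorem again (with a possibly larger shift $n \ge m$) produces inside this preimage an open $U$ of the form $B \times \bA^{\ulambda}$ with $\ulambda$ pure.

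The technically delicate step, and in my view the main obstacle, is then to show that after further shrinking $B$ and possibly enlarging $n$, the induced $\GL$-equivariant morphism $B \times \bA^{\ulambda} \to C \times \bA^{\umu}$ takes the product form $\psi \times \pi$ demanded by the definition of elementary morphism. On coordinate rings this amounts to analyzing a $\GL$-equivariant ring homomorphism $K[C] \otimes \Sym(\bV_{\umu}) \to K[B] \otimes \Sym(\bV_{\ulambda})$. Pieri- and Schur-functor considerations force each image of $\bV_{\mu_i}$ to land, modulo terms of strictly smaller central degree in the $\bA^{\ulambda}$-factor, inside a copy of $\bV_{\mu_i}$ appearing in $\Sym(\bV_{\ulambda})$; the lower-central-degree ``error'' terms are polynomial functions of the $\bA^{\ulambda}$-coordinates with coefficients in $K[B]$, and the goal is to absorb them by replacing $B$ with a dense open subvariety over which they become pulled back from $B$ alone. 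This requires the functorial viewpoint of Section~\ref{ss:functorial} together with a generic-flatness argument on $B$, and is the heart of the proof.

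With the elementary open piece in hand, set $Z' = (X \setminus U) \cup \phi^{-1}(Y \setminus V)$, a proper closed $\GL$-subvariety of $X$ (after unwinding shifts), and apply the inductive hypothesis to each $\GL$-irreducible component of $Z'$, mapped into the $\GL$-closure of its image in $Y$. Concatenating the resulting decompositions with the open elementary piece $\phi|_U \colon U \to V$ yields the desired locally elementary decomposition of $\phi$. The induction terminates by the topological Noetherianity of $\GL$-varieties established in \cite{bdes}.
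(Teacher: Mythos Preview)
The paper does not prove this statement; it is quoted as background from \cite[Theorem~7.8]{bdes} and no proof is given here. So there is no ``paper's own proof'' to compare your proposal against.

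That said, your outline matches the architecture of the proof in \cite{bdes}: Noetherian induction, with the shift theorem used to peel off an elementary open piece at each stage. The step you flag as the main obstacle---putting a $\GL$-morphism $B \times \bA^{\ulambda} \to C \times \bA^{\umu}$ into the product form $\psi \times \pi$---is exactly \cite[Proposition~6.1]{bdes}, which the present paper invokes in the proof of Lemma~\ref{lem:rel-dim-one}. Your sketch of how that step goes is not quite right, though: the lower-degree terms cannot be ``absorbed by replacing $B$ with a dense open subvariety over which they become pulled back from $B$ alone,'' since shrinking $B$ does nothing to remove dependence on the $\bA^{\ulambda}$-coordinates, and generic flatness is not the relevant tool. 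As the usage in Lemma~\ref{lem:rel-dim-one} indicates, the actual mechanism combines shrinking with composing by an automorphism of $\bA^{\ulambda}$ (a nonlinear polynomial change of coordinates) to straighten the map. If you want to fill in this step, consult \cite[\S6]{bdes} directly.
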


\subsection{Chevalley's theorem}
Theorem~\ref{thm:decomp} allows one to easily prove a variety of results about $\GL$-varieties, such as:

\begin{theorem}[{\cite[Theorem 7.13]{bdes}}]\label{thm:chev}
Let $\phi \colon X \to Y$ be a morphism of quasi-affine $\GL$-varieties and let $C$ be a $\GL$-constructible subset of $X$. Then $\phi(C)$ is a $\GL$-constructible subset of $Y$.
\end{theorem}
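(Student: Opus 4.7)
The plan is to reduce the statement entirely to the decomposition theorem (Theorem~\ref{thm:decomp}), which does essentially all of the work. First I would handle the constructible structure of $C$. Since $C$ is $\GL$-constructible, write $C = Z_1 \sqcup \cdots \sqcup Z_k$ as a finite disjoint union of $\GL$-stable locally closed subsets of $X$. Each $Z_i$ is itself a quasi-affine $\GL$-variety, and $\phi|_{Z_i} \colon Z_i \to Y$ is a morphism of $\GL$-varieties. Since $\phi(C) = \bigcup_i \phi(Z_i)$ and a finite union of $\GL$-constructible subsets of $Y$ is again $\GL$-constructible, it suffices to prove the theorem under the simplifying assumption $C = X$, i.e., to show that $\phi(X)$ is $\GL$-constructible for an arbitrary morphism $\phi \colon X \to Y$ of quasi-affine $\GL$-varieties.

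Next I would apply Theorem~\ref{thm:decomp} to $\phi$. This supplies a finite stratification of $X$ into $\GL$-stable locally closed pieces $X_a$ and of $Y$ into $\GL$-stable locally closed pieces $Y_b$, together with an assignment $a \mapsto b(a)$, such that $\phi$ restricts on each $X_a$ to a locally elementary morphism onto $Y_{b(a)}$. Because locally elementary morphisms are required to be surjective by definition, we have $\phi(X_a) = Y_{b(a)}$ for every $a$, and therefore
\[
\phi(X) = \bigcup_a Y_{b(a)},
\]
which is a finite union of $\GL$-stable locally closed subsets of $Y$, hence $\GL$-constructible by definition.

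There is no serious obstacle here beyond invoking the decomposition theorem. The one conceptual point worth noticing is that ``locally elementary'', despite the name, bakes in a surjectivity hypothesis; this is what guarantees that the image of a stratum $X_a$ is exactly the stratum $Y_{b(a)}$, rather than merely a dense subset of it, and it is what allows one to assemble $\phi(X)$ as a finite union of $\GL$-stable locally closed pieces. The argument is in direct analogy with the classical deduction of Chevalley's theorem from a stratification along which the morphism is fiberwise well-behaved, with Theorem~\ref{thm:decomp} playing the role of that stratification in the $\GL$-equivariant world; the shift theorem alone would only produce one open piece at a time and would leave no control over the complement, which is why the full decomposition theorem is needed.
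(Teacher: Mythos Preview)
Your proposal is correct and matches the paper's indication: the paper does not give a proof here but simply states the result and remarks that Theorem~\ref{thm:decomp} ``allows one to easily prove'' it, which is precisely the deduction you carry out. Your observation that the surjectivity built into the definition of \emph{locally elementary} is the key point is exactly right.
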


Here, we say that a subset of a quasi-affine $\GL$-variety is \defn{$\GL$-constructible} if it is a finite union of locally closed $\GL$-stable subsets. One consequence of this theorem is that if $\phi$ is surjective on $\ol{K}$-points then it is scheme-theoretically surjective.

\section{The shift theorem for pairs} \label{s:shift}

\subsection{Statement of results}

In \cite{bdes}, we proved a number of structural results for $\GL$-varieties. An interesting (and seemingly difficult) problem is to extend these results to relative situations, such as a morphism of $\GL$-varieties or just a $\GL$-variety together with a subvariety. The purpose of this section is to prove a relative form of the shift theorem:

\begin{theorem} \label{thm:cdone}
Let $X$ be an irreducible affine $\GL$-variety and let $Z$ be an irreducible closed $\GL$-subvariety of codimension $1$. Then there exists a commutative diagram
\begin{displaymath}
\xymatrix@C=4em{
B \times \bA^{\ulambda} \ar[r]^{j \times \id} \ar@{=}[d] &
C \times \bA^{\ulambda} \ar@{=}[d] \\
\Sh_n(Z)[1/h] \ar[r] & \Sh_n(X)[1/h] }
\end{displaymath}
where $n \in \bN$, $h$ is an invariant function on $\Sh_n(X)$ that is not identically zero on $\Sh_n(Z)$, $\ulambda$ is a pure tuple, $C$ is an irreducible affine variety, $B$ is an irreducible closed subvariety of $C$ of codimension one, and $j \colon B \to C$ is the inclusion.
\end{theorem}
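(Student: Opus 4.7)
The plan is to reduce to the elementary situation via a careful choice of shift and localization. The central construction is a $\GL$-variety $Y$ together with a $\GL$-equivariant morphism $\phi\colon U\to Y$, defined on a $\GL$-stable open subvariety $U$ of some shift $\Sh_n(X)$, whose restriction to $U\cap\Sh_n(Z)$ is dominant with zero-dimensional generic fibre. Granted such a $\phi$, I would apply Theorem~\ref{thm:shift} to $Y$ to find a further shift $\Sh_m(Y)$ and an open affine subvariety isomorphic to $C\times\bA^{\ulambda}$ with $\ulambda$ pure. Pulling back along $\Sh_m(\phi)$ and localising at a $\GL$-invariant function $h$ on $\Sh_n(X)$ that does not vanish on $\Sh_n(Z)$ (which exists because $\phi|_Z$ is dominant, so the image of $Z$ meets every non-empty $\GL$-stable open of $Y$) should yield the displayed diagram. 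The hypotheses on $\phi|_Z$---dominance and relative dimension zero---combined with the codimension-one assumption on $Z$ will force the image of $\Sh_n(Z)[1/h]$ in $C$ to be an irreducible codimension-one closed subvariety $B$, and $\Sh_n(Z)[1/h]$ itself to equal $B\times\bA^{\ulambda}$.

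To construct $\phi$, I would apply Theorem~\ref{thm:shift} to $X$: after some shift $\Sh_{n_0}(X)$ there is an open $\GL$-subvariety $U_0$ isomorphic to $B_0\times\bA^{\urho}$ with $B_0$ irreducible finite-dimensional and $\urho$ pure, and I take the projection $\pi\colon U_0\to B_0$ as the candidate for $\phi$; geometrically this is the quotient of $X$ by the infinite-dimensional factor. If $U_0$ misses $Z$ entirely, I first replace it by a locally elementary open meeting $Z$, using the decomposition theorem (Theorem~\ref{thm:decomp}). Let $B\subseteq B_0$ denote the closure of $\pi(U_0\cap\Sh_{n_0}(Z))$. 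In the favourable case where $B$ is a proper closed subvariety of $B_0$, the inclusion $U_0\cap\Sh_{n_0}(Z)\subseteq B\times\bA^{\urho}$, together with the codimension-one hypothesis on $Z$ and irreducibility of both sides, forces $B$ to have codimension one in $B_0$ and the inclusion to be an equality, which directly produces the diagram of the theorem. This leaves the unfavourable case $B=B_0$, in which $\pi$ restricted to $U_0\cap\Sh_{n_0}(Z)$ is dominant.

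The main obstacle is to exclude this unfavourable case. In it, the generic fibre of $\pi|_{U_0\cap\Sh_{n_0}(Z)}$ is a proper $\GL$-stable closed subvariety of $\bA^{\urho}$ over the field $L=K(B_0)$, of codimension one. The claim needed is that for $\urho$ pure and any field extension $L$ of $K$, no codimension-one $\GL$-stable closed subvariety of $\bA^{\urho}_L$ exists---equivalently, every $\GL$-stable prime of $\Sym(\bV_{\urho})\otimes_K L$ has infinite height. This is precisely the point at which Theorem~\ref{thm:subalg}, the finiteness property for curves in $\GL$-varieties signalled in the introduction, must enter: such a codimension-one $\GL$-stable subvariety should produce, through a generic curve in $\bA^{\urho}_L$ avoiding it, a non-trivial $\GL$-subalgebra whose existence is incompatible with the subalgebra finiteness theorem when $\urho$ is pure. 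The codimension-one hypothesis on $Z$ is essential for this step, since the same mechanism does not apply to higher-codimension analogues of the second case.
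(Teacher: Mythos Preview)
There are two genuine gaps. First, even granting your $\phi$, paragraph one does not explain why $\Sh_n(X)[1/h]$ becomes elementary. Pulling back the elementary open of $Y$ only gives a morphism $\Sh_n(X)[1/h]\to C\times\bA^{\ulambda}$, not the isomorphism in the displayed diagram. In the paper this is the heart of the proof: since $\dim(X|Y)=\dim(X|Z)+\dim(Z|Y)=1$, one proves a separate shift theorem for maps of relative dimension one, showing that $X\to Y$ becomes of invariant finite presentation after shifting and shrinking $Y$ only. This is where Theorem~\ref{thm:subalg} actually enters: the generic fibre $X_\eta$ has an open piece $X_\eta[1/h']$ that is already known to be a curve over $K(Y)$, and Wadsworth's theorem forces the subalgebra $K[X_\eta]\subseteq K[X_\eta[1/h']]$ to be finitely generated; one then spreads this out over $Y$. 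Your invocation of Theorem~\ref{thm:subalg} in paragraph three is for a completely different purpose and is not coherent as written: a curve in $\bA^{\urho}_L$ avoiding a hypersurface $W$ produces no ``non-trivial $\GL$-subalgebra,'' and the finiteness of subalgebras of one-dimensional rings gives no obstruction to the existence of codimension-one $\GL$-subvarieties.

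Second, your construction of $\phi$ goes the wrong way. You shift to make $X$ elementary and then hope $Z$ cooperates; the paper instead shifts to make $Z$ elementary, say $Z\cong B\times\bA^{\ulambda}$, then sets $Y:=\bA^{\dim B}\times\bA^{\ulambda}$ and defines $\phi\colon X\to Y$ by lifting to $K[X]$ a transcendence basis of $K[B]$ together with the copy of $\bV_{\ulambda}$ in $K[Z]$. This guarantees by construction that $\phi|_Z$ is dominant, of relative dimension zero, and of invariant finite presentation, with no case analysis. Your ``unfavourable case'' is in fact the typical one, and your repair for the possibility that $U_0$ misses $Z$ via the decomposition theorem does not work either: the stratum of $X$ that meets $Z$ need not be open in $X$, which is precisely the difficulty Theorem~\ref{thm:cdone} is designed to overcome.
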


As an immediate corollary, we obtain a version of the unirationality theorem for pairs:

\begin{corollary} \label{cor:shift-pair}
Let $Z \subset X$ be as in the theorem. Then there is a commutative diagram
\begin{displaymath}
\xymatrix@C=4em{
B \times \bA^{\ulambda} \ar[r]^{j \times \id} \ar[d]_{\psi} &
C \times \bA^{\ulambda} \ar[d]^{\phi} \\
Z \ar[r] & X }
\end{displaymath}
where $j \colon B \to C$ is as in the theorem, and $\phi$ and $\psi$ are dominant morphisms of $\GL$-varieties.
\end{corollary}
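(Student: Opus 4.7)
The plan is to read the corollary off Theorem~\ref{thm:cdone}, exploiting that shifting alters only the $\GL$-action and not the underlying scheme. First I would apply Theorem~\ref{thm:cdone} to the pair $Z\subset X$ to obtain $n$, the invariant $h$, the pure tuple $\ulambda$, the irreducible affine varieties $B\subseteq C$, and the $\GL$-equivariant isomorphisms
\[
B\times\bA^{\ulambda}\xrightarrow{\sim}\Sh_n(Z)[1/h],\qquad C\times\bA^{\ulambda}\xrightarrow{\sim}\Sh_n(X)[1/h]
\]
compatibly with $j\colon B\hookrightarrow C$.

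Next I would define
\[
\psi\colon B\times\bA^{\ulambda}\xrightarrow{\sim}\Sh_n(Z)[1/h]\hookrightarrow Z \quad\text{and}\quad \phi\colon C\times\bA^{\ulambda}\xrightarrow{\sim}\Sh_n(X)[1/h]\hookrightarrow X,
\]
where the second arrows are the open inclusions that arise from the identity $\Sh_n(Y)=Y$ at the level of underlying schemes for $Y\in\{Z,X\}$. Commutativity of the square in the corollary is inherited from the commutativity of the square in Theorem~\ref{thm:cdone} together with the compatibility of the closed immersion $Z\hookrightarrow X$ with both shifting and localization at the $\GL$-invariant $h$.

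Dominance is then immediate: by hypothesis $h$ is not identically zero on $\Sh_n(Z)$, so $\Sh_n(Z)[1/h]$ is a nonempty open subvariety of the irreducible $Z$, hence dense, making $\psi$ dominant; the same reasoning applied to the irreducible $X$ handles $\phi$. The only step requiring a moment's care is that $\psi$ and $\phi$ qualify as morphisms of $\GL$-varieties in the paper's formalism. This I expect to follow mechanically: the isomorphisms supplied by Theorem~\ref{thm:cdone} and the open inclusions $\Sh_n(Y)[1/h]\hookrightarrow\Sh_n(Y)$ are all $\GL$-equivariant for the shifted action, and the identification $G(n)\cong\GL$ built into the definitions of Section~\ref{s:bg} transports this equivariance across the shift. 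I anticipate no genuine obstacle beyond that bookkeeping; all the real work already lives in Theorem~\ref{thm:cdone}.
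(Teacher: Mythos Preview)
Your argument has a genuine gap at exactly the step you flagged as bookkeeping. The open inclusion $\Sh_n(Y)[1/h]\hookrightarrow Y$ is \emph{not} a morphism of $\GL$-varieties. While $\Sh_n(Y)$ and $Y$ share the same underlying scheme, the $\GL$-action on $\Sh_n(Y)$ is the original action restricted to $G(n)$ and then transported along the isomorphism $\GL\cong G(n)$; this is genuinely different from the original $\GL$-action on $Y$. For a concrete instance, take $Y=\bA^{(1)}$ with coordinate ring $K[x_1,x_2,\ldots]$: under the shifted action every $g\in\GL$ fixes $x_1,\ldots,x_n$, whereas under the original action it does not, so the identity map cannot intertwine the two. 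The identification $G(n)\cong\GL$ does not rescue this---it is precisely what produces the mismatch.

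The paper instead invokes a natural map $\Sh_n(X)\to X$ constructed in \cite[Proposition~4.4]{bdes}, which \emph{is} a morphism of $\GL$-varieties and is dominant. This map is not the identity on underlying schemes: in the functorial language of \S\ref{ss:functorial} it comes from applying $X\{-\}$ to the inclusion $V\hookrightarrow K^n\oplus V$, and for $Y=\bA^{(1)}$ it corresponds on coordinate rings to $x_i\mapsto x_{n+i}$. One then composes the isomorphisms of Theorem~\ref{thm:cdone} with this natural map (restricted to the locus where $h\neq 0$) to obtain $\phi$ and $\psi$; dominance follows because the natural map is dominant and $h\vert_Z\neq 0$. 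So the content beyond Theorem~\ref{thm:cdone} is the existence of this $\GL$-equivariant map, which does not come for free from the scheme-level identity.
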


\begin{proof}
There is a natural map $\Sh_n(X)[1/h] \to X$ that is dominant, see
\cite[Proposition 4.4]{bdes}, and so is its restriction to $\Sh_n(Z)[1/h] \to Z$.
\end{proof}

We now say a bit about the proof of Theorem~\ref{thm:cdone}, which will take all of \S \ref{s:shift}. The shift theorem proved in \cite{bdes} is for a single $\GL$-variety: it says that we can find $n$ and $h$ so that $\Sh_n(X)[1/h]$ is elementary. The proof gives essentially no information about $h$. In particular, we cannot necessarily choose $h$ to be non-zero on a given $\GL$-subvariety $Z \subset X$. This is the main difficulty we must overcome to prove Theorem~\ref{thm:cdone}.

The proof of the theorem has two main components. The first (Proposition~\ref{prop:sub-dim-zero}) shows that (after shifting and shrinking) we can find a map $\phi \colon X \to Y$ such that $\phi \vert_Z$ is dominant of relative dimension zero. Our strategy is to then prove a shift theorem for $\phi$ that only involves shrinking $Y$; this will ensure that we do not discard $Z$ in the shrinking process. We have not been able to prove such a shift theorem for general maps. However, since $Z$ has codimension one, the map $\phi$ has relative dimension one. The second main component (Proposition~\ref{prop:rel-dim-one}) is a shift theorem for such morphisms. The proof leverages an important finiteness property of curves (Theorem~\ref{thm:subalg}), which is why the relative dimension one condition is important.

To facilitate the above arguments, we first develop some general theory in \S \ref{ss:reldim}--\S \ref{ss:spread}.

\subsection{Relative dimension} \label{ss:reldim}

Let $X$ be a $\GL$-variety. For $m \in \bN$, we let $\delta_X(m)=\dim X\{K^m\}$. We refer to $\delta_X$ as the \defn{dimension function} of $X$. We showed \cite[Proposition 5.10]{bdes} that $\delta_X$ is eventually polynomial, i.e., there exists a polynomial $p \in \bQ[t]$ such that $\delta_X(m)=p(m)$ for all $m \gg 0$. Let $Y$ be a second $\GL$-variety. For an integer $d$, we write $\dim(X|Y)=d$ if $\delta_X(m)-\delta_Y(m)=d$ for all $m \gg 0$. We think of $\dim(X|Y)$ as the relative dimension between $X$ and $Y$. We note that if $\dim(X|Y)=d$, then also $\dim(\Sh_n(X)|\Sh_n(Y))=d$ for any $n$. If $Y$ is a subvariety of $X$ then we refer to $\dim(X|Y)$ as the codimension of $Y$ in $X$.  If $\phi \colon X \to Y$ is a map of $\GL$-varieties, we refer to $\dim(X|Y)$ as the relative dimension of $\phi$.

\subsection{Invariant finite presentations}

We say that a morphism $S \to R$ of $\GL$-algebras is of \defn{invariant finite presentation} if $R$ is isomorphic to an $S$-algebra of the form 
\[
S[x_1, \ldots, x_n]/(f_1, \ldots, f_m)
\]
where the $x$'s and $f$'s are $\GL$-invariant. We say that a morphism $X \to Y$ of affine $\GL$-varieties is of \defn{invariant finite presentation} if the map on coordinate rings is. We note that if $X \to Y$ is of invariant finite presentation then so is $\Sh_n(X) \to \Sh_n(Y)$ for any $n$, as well as $X[1/h] \to Y[1/h]$ for any invariant function $h$ on $Y$.

The following gives a canonical characterization of invariant finite presentation; in it,
$R_0$ stands for the part of a $\GL$-algebra $R$ of degree zero in the central grading. 

\begin{proposition} \label{prop:ifp-char}
Let $f \colon S \to R$ be a map of $\GL$-algebras. The following are equivalent:
\begin{enumerate}
\item The map $f$ is of invariant finite presentation.
\item The map $S_0 \to R_0$ is of finite presentation and the natural map $R_0 \otimes_{S_0} S \to R$ is an isomorphism.
\end{enumerate}
\end{proposition}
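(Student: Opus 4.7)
The plan hinges on the observation that $R_0$ coincides with the $\GL$-invariants $R^{\GL}$: the only irreducible polynomial $\GL$-representation $\bV_\lambda$ with trivial central character is the trivial one (forcing $\lambda = \emptyset$). Since every polynomial $\GL$-representation decomposes as a direct sum of Schur functors $\bV_\lambda$, the functor $M \mapsto M^{\GL} = M_0$ on polynomial $\GL$-representations is exact and additive, so in particular $(M_1 + M_2)^{\GL} = M_1^{\GL} + M_2^{\GL}$ for any pair of $\GL$-submodules $M_1, M_2 \subseteq M$.

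For the implication $\mathrm{(a)} \Rightarrow \mathrm{(b)}$, I would fix an invariant finite presentation $R \cong S[x_1,\ldots,x_n]/I$ with $I = (f_1,\ldots,f_m)$ for $\GL$-invariants $x_i$ and $f_j$. Because the $x_i$ are invariant, $S[x_1,\ldots,x_n] = S \otimes_K K[x_1,\ldots,x_n]$ with trivial $\GL$-action on the polynomial factor, so $(S[x_1,\ldots,x_n])_0 = S_0[x_1,\ldots,x_n]$. For each $j$, multiplication by $f_j$ is a $\GL$-equivariant surjection $S[x_1,\ldots,x_n] \twoheadrightarrow f_j \cdot S[x_1,\ldots,x_n]$, so applying the exact functor $(-)_0$ and then summing over $j$ (using the additivity formula from paragraph one) yields $I_0 = (f_1,\ldots,f_m) \cdot S_0[x_1,\ldots,x_n]$. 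Feeding this into the exact sequence $0 \to I \to S[x_1,\ldots,x_n] \to R \to 0$ then produces the finite presentation $R_0 \cong S_0[x_1,\ldots,x_n]/(f_1,\ldots,f_m)$ over $S_0$, and tensoring with $S$ over $S_0$ recovers the given presentation of $R$, establishing $R_0 \otimes_{S_0} S \cong R$.

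For the implication $\mathrm{(b)} \Rightarrow \mathrm{(a)}$, I would choose a finite presentation $R_0 \cong S_0[y_1,\ldots,y_n]/(g_1,\ldots,g_m)$ and lift the $y_i$ to their images in $R_0 \subseteq R$, which are $\GL$-invariant by definition of $R_0$. The assumed isomorphism $R_0 \otimes_{S_0} S \cong R$ together with right-exactness of $- \otimes_{S_0} S$ then gives $R \cong S[y_1,\ldots,y_n]/(g_1,\ldots,g_m)$; since the $g_j$ are polynomials in the invariant $y_i$ with coefficients in the invariant subring $S_0 \subseteq S$, they are invariant in $S[y_1,\ldots,y_n]$, exhibiting $S \to R$ as of invariant finite presentation.

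The main technical point is thus the computation of $I_0$ in the first implication; the plan isolates it as a direct consequence of the exactness and additivity of the invariant functor on polynomial $\GL$-representations, which is the concrete manifestation of semisimplicity used throughout.
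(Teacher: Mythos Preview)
Your argument is correct and follows the same route as the paper's proof: both directions amount to the observation that the presentation $R = S[x_1,\ldots,x_n]/(f_1,\ldots,f_m)$ with invariant $x_i$ and $f_j$ passes, upon taking degree-$0$ parts, to the presentation $R_0 = S_0[x_1,\ldots,x_n]/(f_1,\ldots,f_m)$, and vice versa upon applying $-\otimes_{S_0} S$. You have simply spelled out more carefully (via exactness of $(-)_0$ on polynomial representations) the step the paper abbreviates as ``looking in degree~0.''
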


\begin{proof}
Suppose (a) holds. Write $R=S[x_1, \ldots, x_n]/(f_1, \ldots, f_m)$ with the $x$'s and $f$'s $\GL$-invariant. Looking in degree~0, we see that $R_0=S_0[x_1, \ldots, x_n]/(f_1, \ldots, f_m)$. It is therefore clear that (b) holds.

Now suppose (b) holds. We thus have an isomorphism $R_0=S_0[x_1, \ldots, x_n]/(f_1, \ldots, f_m)$. Applying $- \otimes_{S_0} S$, we find
\begin{displaymath}
R = R_0 \otimes_{S_0} S = S[x_1, \ldots, x_n]/(f_1, \ldots, f_m)
\end{displaymath}
and so (a) holds.
\end{proof}

As a consequence of this proposition, we obtain the following useful transitivity.

\begin{proposition} \label{prop:ifp-trans}
Let $T \to S \to R$ be maps of $\GL$-algebras, with $T \to S$ and $T \to R$ of invariant finite presentation and $S_0$ noetherian. Then $S \to R$ is of invariant finite presentation.
\end{proposition}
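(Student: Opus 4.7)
The plan is to invoke the equivalence from Proposition~\ref{prop:ifp-char} and reduce the problem to a statement about the degree-zero subrings, where standard commutative algebra takes over.

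By Proposition~\ref{prop:ifp-char} applied to $T \to S$ and to $T \to R$, the maps $T_0 \to S_0$ and $T_0 \to R_0$ are of finite presentation, and the natural maps $S_0 \otimes_{T_0} T \to S$ and $R_0 \otimes_{T_0} T \to R$ are isomorphisms. To conclude that $S \to R$ is of invariant finite presentation, it suffices, again by Proposition~\ref{prop:ifp-char}, to verify that (i) $S_0 \to R_0$ is of finite presentation, and (ii) the natural map $R_0 \otimes_{S_0} S \to R$ is an isomorphism.

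For (i), I would first note that $S_0 \to R_0$ is of finite type: if $r_1,\dots,r_n \in R_0$ generate $R_0$ as a $T_0$-algebra, then since the map $T_0 \to R_0$ factors through $S_0$, the same elements generate $R_0$ as an $S_0$-algebra. Hence $R_0 = S_0[y_1,\dots,y_n]/J$ for some ideal $J$. Because $S_0$ is noetherian by hypothesis, $S_0[y_1,\dots,y_n]$ is noetherian, so $J$ is finitely generated and $S_0 \to R_0$ is of finite presentation. For (ii), the tensor product calculation
\[
R_0 \otimes_{S_0} S \;\cong\; R_0 \otimes_{S_0} (S_0 \otimes_{T_0} T) \;\cong\; R_0 \otimes_{T_0} T \;\cong\; R
\]
gives the required isomorphism, where the first and last isomorphisms come from the hypotheses on $T \to S$ and $T \to R$, and the middle one is the standard associativity of tensor products.

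There is no real obstacle here: the key input is the noetherian hypothesis on $S_0$, which is precisely what is needed to upgrade finite type to finite presentation in the degree-zero part, and everything else follows formally from the canonical characterization in Proposition~\ref{prop:ifp-char}.
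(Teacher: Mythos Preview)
Your proof is correct and follows essentially the same approach as the paper: both arguments appeal to Proposition~\ref{prop:ifp-char}, use the noetherian hypothesis on $S_0$ to upgrade finite type to finite presentation for $S_0 \to R_0$, and establish the isomorphism $R_0 \otimes_{S_0} S \cong R$ via the same chain $R_0 \otimes_{S_0} S \cong R_0 \otimes_{S_0} (S_0 \otimes_{T_0} T) \cong R_0 \otimes_{T_0} T \cong R$. Your write-up is slightly more explicit about invoking Proposition~\ref{prop:ifp-char} in both directions, but the substance is identical.
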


\begin{proof}
Since $R$ is finitely generated as a $T$-algebra, it is certainly also finitely generated as an $S$-algebra. Thus $R_0$ is finitely generated over $S_0$, and therefore finitely presented over $S_0$, since $S_0$ is noetherian. Now consider the following diagram
\begin{displaymath}
\xymatrix@C=3em{
R_0 \otimes_{T_0} T \ar@{=}[r] &
R_0 \otimes_{S_0} (S_0 \otimes_{T_0} T) \ar[r]^-i &
R_0 \otimes_{S_0} S \ar[r]^-j & R }
\end{displaymath}
and let $k$ be the composition. Since $T \to S$ and $T \to R$ are of invariant finite presentation, we see that $k$ and $i$ are isomorphisms (Proposition~\ref{prop:ifp-char}). Thus $j$ is an isomorphism. Since $S_0$ is noetherian, it follows that $S \to R$ is of invariant finite presentation (Proposition~\ref{prop:ifp-char}).
\end{proof}

Suppose that $X \to Y$ is a map of affine $\GL$-varieties. Let $X_0$ be the spectrum of the degree~0 piece of $K[X]$, and similarly for $Y_0$. We have a commutative diagram
\begin{displaymath}
\xymatrix{
X \ar[d] \ar[r] & Y \ar[d] \\
X_0 \ar[r] & Y_0 }
\end{displaymath}
Geometrically, Proposition~\ref{prop:ifp-char} says that $X \to Y$ is of invariant finite
presentation if and only if this diagram is cartesian. (We note that $X_0 \to Y_0$ is of
finite presentation, being a map of affine varieties of finite type over $K$.) The following proposition, which is one of the main reasons we care about invariant finite presentation, is an immediate consequence of this:

\begin{proposition} \label{prop:ifp-over-elem}
Let $\phi \colon X \to B \times \bA^{\ulambda}$ be a morphism of affine $\GL$-varieties of
invariant finite presentation, where $B$ is a variety and $\ulambda$ is a pure tuple. Then
$X$ is isomorphic to $C \times \bA^{\ulambda}$ for some variety $C$, and under this
isomorphism $\phi$ corresponds to $\psi \times \id \colon C \times \bA^{\ulambda} \to B
\times \bA^{\ulambda}$ for some morphism $\psi \colon C \to B$. 
\end{proposition}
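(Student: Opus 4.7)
The plan is to derive the proposition as a formal consequence of Proposition~\ref{prop:ifp-char}, exploiting the purity of $\ulambda$ to control the degree-zero part of $K[B \times \bA^{\ulambda}]$. Write $S = K[B \times \bA^{\ulambda}] = K[B] \otimes_K \Sym(\bV_{\ulambda})$ and $R = K[X]$, with $\phi^* \colon S \to R$ the corresponding $\GL$-equivariant ring homomorphism, which by hypothesis is of invariant finite presentation.

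First I would verify that $S_0 = K[B]$. Since $\ulambda$ is pure, each $\bV_{\lambda_i}$ consists of elements of central degree $|\lambda_i| \ge 1$, so $\Sym(\bV_{\ulambda})_0 = K$ and hence $S_0 = K[B] \otimes_K K = K[B]$; geometrically, $Y_0 = B$ where $Y := B \times \bA^{\ulambda}$. Proposition~\ref{prop:ifp-char}(b) then provides an isomorphism $R_0 \otimes_{S_0} S \cong R$ of $\GL$-algebras, which upon substituting $S_0 = K[B]$ simplifies to $R_0 \otimes_K \Sym(\bV_{\ulambda}) \cong R$. Setting $C := X_0 = \Spec(R_0)$, this reads $K[C \times \bA^{\ulambda}] \cong K[X]$, i.e., an isomorphism $X \cong C \times \bA^{\ulambda}$ of affine $\GL$-varieties (with $\GL$ acting trivially on $C$).

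To conclude, I would identify $\phi$ under this isomorphism. Since $\phi^*$ is $\GL$-equivariant it preserves the central grading, so it restricts in degree zero to a map $\phi^*_0 \colon K[B] \to R_0$; let $\psi \colon C \to B$ be the corresponding morphism of affine varieties. Tracing the isomorphism above, $\phi^*$ is seen to agree with $\psi^* \otimes \id_{\Sym(\bV_{\ulambda})}$: on $K[B] \subset S$ this is the definition of $\psi$, and on $\bV_{\ulambda} \subset S$ it holds because the multiplication map $R_0 \otimes_K \Sym(\bV_{\ulambda}) \to R$ sends $1 \otimes v$ to $\phi^*(v)$. Dualizing yields $\phi = \psi \times \id$. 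There is no genuine obstacle here: once Proposition~\ref{prop:ifp-char} is available, the result is a mechanical consequence of the single observation that purity forces $\Sym(\bV_{\ulambda})_0 = K$.
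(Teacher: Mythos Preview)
Your argument is correct and is precisely the approach the paper takes: the paper states the proposition as ``an immediate consequence'' of the cartesian-square reformulation of Proposition~\ref{prop:ifp-char}, and you have simply spelled out that consequence algebraically via $R_0 \otimes_{S_0} S \cong R$ together with the observation that purity of $\ulambda$ forces $S_0 = K[B]$. The only point you might make explicit is that $C = \Spec(R_0)$ is indeed a variety (finite type since $S_0 \to R_0$ is of finite presentation and $S_0 = K[B]$ is; reduced since $R_0 \subset R$), but this is routine.
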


A map of $\GL$-algebras of finite type certainly does not have to
be of invariant finite presentation: for instance, for any partition
$\lambda$ with $|\lambda| > 0$, the homomorphism $\Sym(\bV_{\lambda})
\to K$ with kernel generated by $\bV_{\lambda}$ is not.  However, the
next proposition does give something in this direction.

\begin{proposition} \label{prop:gen-ifp}
Let $\phi \colon X \to Y$ be a morphism of affine $\GL$-varieties and assume that $\phi$ is of finite type in the usual sense. Then there exists $n \ge 0$ and a nonzero invariant function $h$ on $\Sh_n(Y)$ such that $\Sh_n(X)[1/h] \to \Sh_n(Y)[1/h]$ is flat of invariant finite presentation.
\end{proposition}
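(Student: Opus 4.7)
The plan is to combine the shift theorem on $Y$ with the decomposition theorem on $\phi$, and then exploit the finite-type hypothesis to force the Schur tuples on source and target of the resulting elementary morphism to coincide; invariant finite presentation is then immediate from Proposition~\ref{prop:ifp-char}, and flatness follows from classical generic flatness on the finite-dimensional base.

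First, I would apply Theorem~\ref{thm:shift} to $Y$ to obtain $n_0 \ge 0$ and a non-zero $\GL$-invariant $h_0$ on $\Sh_{n_0}(Y)$ with $\Sh_{n_0}(Y)[1/h_0] \cong C \times \bA^{\umu}$ for some affine variety $C$ and pure tuple $\umu$. Pulling $h_0$ back along $\phi$ and replacing $\phi$ by its shift and $[1/h_0]$-localization preserves both the hypothesis and the target conclusion, so I assume $Y = C \times \bA^{\umu}$. Next, apply Theorem~\ref{thm:decomp} to $\phi$: there exist $n_1 \ge 0$, a $G(n_1)$-stable open $V \subseteq \Sh_{n_1}(X)$, and a $G(n_1)$-stable dense open $V' \subseteq \Sh_{n_1}(Y)$ such that $\phi|_V \colon V \to V'$ is $G(n_1)$-elementary, taking after identification $G(n_1) \cong \GL$ the form $\psi \times \pi \colon B \times \bA^{\ulambda} \to C' \times \bA^{\umu'}$, with $\umu' \subseteq \ulambda$ pure tuples, $\psi \colon B \to C'$ a surjective morphism of irreducible affine varieties, and $\pi$ the canonical projection. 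Using Theorem~\ref{thm:chev}, the image of $\Sh_{n_1}(X) \setminus V$ in $\Sh_{n_1}(Y)$ is a proper $\GL$-constructible subset, and after a further invariant localization on $Y$ I arrange $V' = \Sh_{n_1}(Y)[1/h_1]$ and $V = \phi^{-1}(V')$.

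The crucial step is to deduce $\ulambda = \umu'$ from the finite-type hypothesis. The induced map on coordinate rings factors as
\[
K[C'] \otimes \Sym(\bV_{\umu'}) \xrightarrow{\psi^* \otimes \id} K[B] \otimes \Sym(\bV_{\umu'}) \hookrightarrow K[B] \otimes \Sym(\bV_{\ulambda}),
\]
with the first arrow of finite type since $\psi$ is a morphism of finite-type affine varieties. Since the composition is also of finite type, the inclusion must be. But $\Sym(\bV_{\ulambda}) = \Sym(\bV_{\umu'}) \otimes \Sym(\bV_{\ulambda\setminus\umu'})$ is finitely generated over $\Sym(\bV_{\umu'})$ only when $\bV_{\ulambda\setminus\umu'}$ is finite-dimensional, and purity of $\ulambda$ then forces $\ulambda = \umu'$. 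Hence $\phi|_V = \psi \times \id_{\bA^{\ulambda}}$. Invariant finite presentation now follows from Proposition~\ref{prop:ifp-char}, since the degree-zero part is the finitely presented map $\psi^* \colon K[C'] \to K[B]$ and the tensor-product condition is tautological. For flatness, classical generic flatness applied to $\psi$ yields a non-zero $c \in K[C']$ making $B[1/\psi^*c] \to C'[1/c]$ flat; pulling $c$ back to a $\GL$-invariant on $\Sh_{n_1}(Y)$ and incorporating it into $h_1$ makes $\phi$ flat. Reassembling the shifts and invariant localizations from the steps above gives the required $n$ and $h$.

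The main obstacle will be arranging $V = \phi^{-1}(V')$ in the second paragraph, i.e., ensuring that the preimage of a chosen principal invariant open on $Y$ avoids the other locally elementary strata of the decomposition. This is where Theorem~\ref{thm:chev} is essential, controlling images of the non-top strata as proper $\GL$-constructible subsets that can be cut out by invariants after sufficient further shifting. Once this is in place, the finite-type argument forcing $\ulambda = \umu'$ and generic flatness complete the proof in a routine manner.
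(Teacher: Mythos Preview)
There is a genuine gap exactly where you flag the main obstacle. Chevalley's theorem (Theorem~\ref{thm:chev}) tells you that $\phi(\Sh_{n_1}(X)\setminus V)$ is $\GL$-constructible, but not that it is a \emph{proper} subset of $\Sh_{n_1}(Y)$, and in general it is not. Already for $X=Y\sqcup Y$ with $\phi$ the fold map, any elementary chart $V$ lies in a single component (elementary varieties are irreducible by definition), while $X\setminus V$ contains the other component and surjects onto $Y$; no invariant localization of $Y$ can then force $\phi^{-1}(V')\subseteq V$. Even for irreducible $X$, as soon as the base map $\psi\colon B\to C'$ has positive relative dimension, the proper closed subset $X\setminus V$ can still map dominantly to $Y$. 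Since the conclusion of the proposition concerns all of $\Sh_n(X)[1/h]=\phi^{-1}(\Sh_n(Y)[1/h])$ and not merely the chart $V$, this step cannot be bypassed and the argument does not close.

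The paper's proof is much more elementary and avoids the decomposition theorem entirely. One applies classical generic flatness to the finite-type morphism $\phi$ to obtain a non-empty open $U\subseteq Y$ over which $\phi$ is flat and of finite presentation; one then saturates $U$ under $\GL$, picks a nonzero $h$ on $Y$ vanishing on the complement of $U$, and shifts so that $h$ becomes invariant. After replacing $Y$ by $Y[1/h]$, one simply writes down a concrete finite presentation $K[X]=K[Y][x_1,\dots,x_n]/(f_1,\dots,f_m)$ and shifts once more so that each of the finitely many $x_i$ and $f_j$ becomes $\GL$-invariant; this is invariant finite presentation on the nose. The idea you are missing is that the finite-type hypothesis, once upgraded to finite presentation by generic flatness, directly hands you finitely many generators \emph{and} relations, and a single shift renders any finite list of ring elements invariant---no structural input about $\GL$-varieties beyond this elementary observation is required.
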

\begin{proof}
We are free to shift and shrink $Y$ throughout the proof. By generic flatness \stacks{052B}
there is a non-empty open subset $U$ of $Y$ such that $X_U \to U$ is flat of finite
presentation. Replacing $U$ with $\bigcup_{g \in \GL} gU$, we can assume that $U$ is
$\GL$-invariant. Let $h$ be a nonzero function on $Y$ that vanishes identically on the complement of $U$. Now shift so that $h$ becomes an invariant function and replace $Y$ by $Y[1/h]$. Now $X \to Y$ is flat of finite presentation. Write down a finite presentation for $K[X]$ as a $K[Y]$-algebra and shift so that the generators and relations become invariant. The result follows.
\end{proof}

We warn the reader that, in the above proposition, if $X \to Y$ is not dominant then $\Sh_n(X)[1/h]$ may be the empty variety. Combining the above two results, we obtain the following shift theorem for morphisms of finite type:

\begin{proposition} \label{prop:ft-shift}
Let $\phi\colon X \to Y$ be a dominant morphism of affine $\GL$-varieties such that $\phi$ is of finite type. Then there exists a diagram
\begin{displaymath}
\xymatrix@C=4em{
\Sh_n(X)[1/h] \ar@{=}[r] \ar[d]_{\phi} & C \times \bA^{\ulambda} \ar[d]^{\psi \times \id} \\
\Sh_n(Y)[1/h] \ar@{=}[r] & B \times \bA^{\ulambda} }
\end{displaymath}
where $n$ is a non-negative integer, $h$ is a nonzero invariant function on $\Sh_n(Y)$, $B$ and $C$ are irreducible affine varieties, $\ulambda$ is a pure tuple, and $\psi \colon C \to B$ is a flat dominant map.
\end{proposition}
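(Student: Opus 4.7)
The plan is to chain together the three propositions just proved (\ref{prop:ifp-char}, \ref{prop:ifp-trans}, \ref{prop:ifp-over-elem}, \ref{prop:gen-ifp}) with the shift theorem (\ref{thm:shift}) applied to the \emph{target} $Y$. The slogan is: first make $\phi$ behave well ring-theoretically (flat, of invariant finite presentation), then trivialize $Y$, then watch $X$ trivialize automatically with the \emph{same} $\bA^{\ulambda}$-factor because of invariant finite presentation.

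First, I would apply Proposition~\ref{prop:gen-ifp} to $\phi$: after replacing $X$ and $Y$ by $\Sh_{n_1}(X)[1/h_1]$ and $\Sh_{n_1}(Y)[1/h_1]$ for some $n_1$ and some nonzero invariant $h_1$ on $\Sh_{n_1}(Y)$, we may assume $\phi$ is flat and of invariant finite presentation. Note that dominance of $\phi$ is preserved here, since $\Sh_{n_1}(Y)[1/h_1] \to Y$ is dominant. Second, I would apply the shift theorem (Theorem~\ref{thm:shift}) to $Y$: after a further shift $\Sh_{n_2}$ and localization by a nonzero invariant $h_2$, an open piece of $Y$ becomes isomorphic to $B \times \bA^{\ulambda}$ with $\ulambda$ pure (using the remark following the shift theorem) and $B$ an affine variety; if necessary, pass to an irreducible component of $B$ meeting the image and shrink further to make $B$ irreducible. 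Performing the corresponding shift and localization on $X$ (note that a shift does not affect invariant finite presentation, and neither does localization by an invariant function on $Y$), we are in the following situation: $\phi$ is flat and of invariant finite presentation, and $Y = B \times \bA^{\ulambda}$ with $B$ irreducible affine and $\ulambda$ pure.

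Third, I would invoke Proposition~\ref{prop:ifp-over-elem}: the composite $X \to Y = B \times \bA^{\ulambda}$ is of invariant finite presentation (the composite of an invariant-finite-presentation map with an isomorphism obviously is one, which is immediate from the characterization in Proposition~\ref{prop:ifp-char}), so $X \cong C \times \bA^{\ulambda}$ for some variety $C$, and under this identification $\phi$ has the form $\psi \times \id$ for a morphism $\psi \colon C \to B$. Dominance of $\phi = \psi \times \id$ forces dominance of $\psi$, and in particular forces $C$ to have an irreducible component dominating $B$; restricting to that component (and correspondingly shrinking $X$ by the invariant function on $X$ cutting out the other components) makes $C$ irreducible. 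For flatness: $\phi = \psi \times \id$ is flat by Step~1, and since $K[\bA^{\ulambda}]$ is $K$-free hence faithfully flat, flatness of $K[C] \otimes K[\bA^{\ulambda}]$ over $K[B] \otimes K[\bA^{\ulambda}]$ descends to flatness of $K[C]$ over $K[B]$, so $\psi$ is flat.

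The main potential obstacle is the compatibility point in Step~3: a priori, trivializing $Y$ and trivializing $X$ could yield \emph{different} tuples $\ulambda$ and $\umu$, and we need them to match. This is exactly what the invariant-finite-presentation condition is designed to guarantee via Proposition~\ref{prop:ifp-over-elem}, and it is the reason the technical work in \S\ref{s:shift} (Propositions~\ref{prop:ifp-char}--\ref{prop:gen-ifp}) has to be done before this statement — once those are in hand, the argument above is essentially a bookkeeping exercise of successive shifting and localization.
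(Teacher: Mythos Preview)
Your proposal is correct and follows exactly the approach the paper has in mind: the paper's ``proof'' is just the sentence ``Combining the above two results'' (i.e.\ Propositions~\ref{prop:ifp-over-elem} and~\ref{prop:gen-ifp}, with the shift theorem for $Y$ implicit), and you have spelled out precisely that combination, including the faithfully-flat descent of flatness from $\psi\times\id$ to $\psi$.

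One minor caveat: your step of ``shrinking $X$ by the invariant function on $X$ cutting out the other components'' to force $C$ irreducible is not permitted by the form of the conclusion, which only allows localization at an invariant $h$ on $\Sh_n(Y)$. In the paper's actual uses of this proposition (e.g.\ Proposition~\ref{prop:rel-dim-one}) both $X$ and $Y$ are irreducible, whence $B$ and $C$ are automatically irreducible and no such step is needed; you can simply drop that part of the argument under that standing hypothesis.
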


\subsection{Spreading out finiteness} \label{ss:spread}

In the previous subsection, we established a shift theorem
for a finite-type map $X \to Y$ of $\GL$-varieties. We now show that it is in fact enough to assume that $X \to Y$ is generically of finite type (i.e., the generic fiber is of finite type). This stronger form is used in the proof of Proposition~\ref{prop:rel-dim-one} below. Here is the precise statement:

\begin{proposition} \label{prop:ft-spread}
Let $\phi \colon X \to Y$ be a dominant morphism of irreducible affine $\GL$-varieties. Let $\eta \in Y$ be the generic point, and suppose that $X_{\eta}$ is of finite type over $K(Y)$. Then there exists $n \ge 0$ and a non-zero $\GL$-invariant function $h$ on $Y$ such that $\Sh_n(X)[1/h] \to \Sh_n(Y)[1/h]$ is of finite type.
\end{proposition}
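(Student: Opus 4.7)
The plan is to apply the decomposition theorem to $\phi$ and use the finite-type hypothesis on the generic fiber to constrain the resulting elementary structure to be of finite type. Specifically, I would invoke Theorem~\ref{thm:decomp} on $\phi$, yielding a locally elementary decomposition. Since $X$ is irreducible and $\phi$ is dominant, the dense open stratum contains the generic point of $X$, and local elementariness at this point yields an integer $n \ge 0$, $G(n)$-stable open subsets $U \subseteq \Sh_n(X)$ and $V \subseteq \Sh_n(Y)$ containing the generic points, and a commutative diagram
\[
\xymatrix@C=3em{
U \ar[r]^-\sim \ar[d]_{\phi|_U} & B \times \bA^{\ulambda} \ar[d]^{\psi \times \pi} \\
V \ar[r]^-\sim & C \times \bA^{\umu} }
\]
with $\psi \colon B \to C$ a surjective morphism of ordinary varieties, $\umu \subseteq \ulambda$ pure tuples, and $\pi$ the projection onto the first factor.

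Next I would argue that $\ulambda = \umu$. Since $V$ contains the generic point $\eta$ of $Y$, the fiber $\phi|_U^{-1}(\eta)$ is an open subscheme of $X_\eta$, hence of finite type over $K(Y)$ by hypothesis. Under the isomorphism above, this fiber is identified with $\psi^{-1}(\eta_C) \times \bA^{\ulambda \setminus \umu}$ base-changed to $K(Y)$. Since $\bA^{\ulambda \setminus \umu}$ is infinite-dimensional whenever $\ulambda \setminus \umu$ is non-empty, being of finite type forces $\ulambda = \umu$. Consequently $\phi|_U \cong \psi \times \id$, which is of finite type because $\psi$ is a morphism of finite-type ordinary varieties.

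Finally I would reduce to a principal invariant open. The complement of $V$ in $\Sh_n(Y)$ is a proper $G(n)$-stable closed subvariety, cut out by a nonzero $G(n)$-stable ideal of $K[Y]$. Choosing a nonzero element $h$ of this ideal and increasing $n$ if necessary, I can arrange that $h$ becomes $G(n)$-invariant, since any given element of $K[Y]$ involves only finitely many basis vectors of $\bV$ and is therefore fixed by $G(n)$ for $n$ sufficiently large. Then $\Sh_n(Y)[1/h] \subseteq V$; by choosing $h$ more carefully I can also arrange $\Sh_n(X)[1/h] \subseteq U$, so that the restricted morphism is a basic-open restriction of $\psi \times \id$, hence of finite type. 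The main technical obstacle should be this last step: extracting, from the locally elementary decomposition (which only guarantees $G(n)$-elementary structure in open neighborhoods, with $n$ potentially varying from point to point), a single $n$ and a single $G(n)$-invariant $h$ yielding the desired principal open inside both $U$ and $V$.
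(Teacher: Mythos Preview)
Your first two steps are fine: invoking local elementariness at the generic point and using the finite-type hypothesis on $X_\eta$ to force $\ulambda=\umu$ is correct and clean. The gap is in the last step, and it is not merely a technical nuisance---it is genuinely false that one can choose an invariant $h$ on $Y$ with $\Sh_n(X)[1/h]\subseteq U$. The point is that $\Sh_n(X)[1/h]$ is by definition the \emph{full preimage} $\phi^{-1}(D(h))$, and nothing in the decomposition theorem forces $\phi^{-1}(V)=U$. Concretely, take $\phi\colon \bA^1\times\bA^{\ulambda}\to\bA^{\ulambda}$ the projection; then $U=(\bA^1\setminus\{0\})\times\bA^{\ulambda}$ and $V=\bA^{\ulambda}$ give an elementary chart of the required shape, yet $X\setminus U=\{0\}\times\bA^{\ulambda}$ surjects onto $Y$, so no nonzero $h$ on $Y$ can cut it away. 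Knowing that $\phi$ is of finite type on an open $U\subset X$ does not, by itself, let you spread finiteness to $\phi^{-1}(D(h))$ after shrinking $Y$; you need control over all of $K[X]$, not just $K[U]$.

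The paper's argument is different and addresses exactly this. After shifting and shrinking so that $Y=B\times\bA^{\ulambda}$ is elementary, it works directly with the ring map $S=K[Y]\to R=K[X]$: choose finitely many (after a further shift, $\GL$-invariant) elements $x_1,\dots,x_n\in R$ generating $R_L$ over $L=\Frac(S)$, and let $R'\subset R$ be the $S$-subalgebra they generate. The key input is Lemma~\ref{lem:ft-spread}, which says $\Frac(S)^{\pol}=K(B)\otimes_K\Sym(\bV_{\ulambda})$. Using a monomial basis of $R_L$ in the invariant $x_i$, this forces $R_L^{\pol}=K(B)\otimes_{K[B]}R'$. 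Since every $\GL$-generator $y_j$ of $R$ lies in $R_L^{\pol}$, each $y_j\in R'[1/h]$ for a single $h\in K[B]$; as $R'[1/h]$ is $\GL$-stable, the entire orbit of each $y_j$ lies there, giving $R[1/h]=R'[1/h]$, which is finitely generated over $S[1/h]$. Thus the paper controls the whole of $K[X]$ via the polynomial-representation structure, which is precisely what your chart-based argument cannot supply.
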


We will require some preparation before giving the proof. Let $V$ be an arbitrary
representation of $\GL$ over some field extension $\Omega$ of $K$. We say that an element of $V$ is \defn{polynomial} if it generates a polynomial subrepresentation. We let $V^{\pol}$ be the set of all polynomial elements; this is the maximal polynomial subrepresentation of $V$. Note that $(-)^{\pol}$ commutes with direct sums (even infinite direct sums). We will need the following lemma. This lemma also appears as \cite[Proposition~2.8]{sym2noeth}, but we include a proof to be self-contained; we note that our proof differs from the one in \cite{sym2noeth}.

\begin{lemma} \label{lem:ft-spread}
Let $\ulambda$ be a pure tuple and let $S=\Omega \otimes_K \Sym(\bV_{\ulambda})$, and denote
by $\Frac(S)$ the fraction field of the domain $S$. Then $\Frac(S)^{\pol}=S$.
\end{lemma}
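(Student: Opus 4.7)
The non-trivial containment is $\Frac(S)^{\pol} \subseteq S$; the reverse is automatic since $S$ is itself a polynomial $\GL$-algebra. My plan is to argue by contradiction, exploiting that $S = \Omega \otimes_K \Sym(\bV_{\ulambda})$ is a polynomial ring in any basis of $\bV_{\ulambda}$, hence a UFD. Given $w \in \Frac(S)^{\pol}$, write $w = f/h$ with $f, h \in S$ coprime, and assume that $h$ has a non-constant irreducible factor $p$; I aim for a contradiction.

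For each $n$, the cyclic $\GL_n$-subrepresentation $W_n := \mathrm{span}_\Omega(\GL_n \cdot w) \subseteq \Frac(S)$ is finite-dimensional: polynomial $\GL$-representations restrict to polynomial $\GL_n$-representations, and every element of a polynomial $\GL_n$-representation generates a finite-dimensional subrepresentation. Choose $g_{n,1}, \ldots, g_{n,k_n} \in \GL_n$ whose translates of $w$ span $W_n$. For each $g \in \GL_n$, writing $g \cdot w = \sum_i c_i \, g_{n,i} \cdot w$ and clearing denominators inside $S$ yields
\[
g(f) \prod_{i=1}^{k_n} g_{n,i}(h) = g(h) \cdot \sum_i c_i \, g_{n,i}(f) \prod_{j \neq i} g_{n,j}(h).
\]
Since $g$ acts as an $\Omega$-algebra automorphism of $S$, we have $\gcd(g(f), g(h)) = 1$, so $g(h)$ divides $\prod_i g_{n,i}(h)$ in the UFD $S$. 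Hence the irreducible factor $g(p)$ of $g(h)$ is associate, up to a unit in $\Omega^{\times}$, to an irreducible factor of some $g_{n,i}(h)$---a finite set independent of $g$.

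Consequently, the orbit $\GL_n \cdot p$ is contained in a finite union of one-dimensional $\Omega$-subspaces of $S$. Connectedness of $\GL_n$ forces this finite set of lines to be permuted trivially, so $\Omega \cdot p$ itself is $\GL_n$-stable; $\GL_n$ therefore acts on it through a polynomial character, necessarily $\det_n^{k_n}$ for some integer $k_n \geq 0$. Compatibility of these characters under the inclusions $\GL_n \hookrightarrow \GL_{n+1}$ forces $k_n$ to be a common integer $k$, making $\Omega \cdot p$ a $\GL$-stable one-dimensional subrepresentation of the polynomial $\GL$-representation $S$. Since polynomial $\GL$-representations are semisimple and the only one-dimensional polynomial irreducible is the trivial $\bV_{\emptyset}$, we deduce $k = 0$ and that $p$ is $\GL$-invariant. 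Finally, purity of $\ulambda$ implies $S^{\GL} = \Omega$, because every element of $\Sym(\bV_{\ulambda})$ of positive central degree lies in non-trivial isotypic components. Thus $p \in \Omega$, contradicting its non-constancy.

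The main obstacle is converting the representation-theoretic hypothesis $w \in \Frac(S)^{\pol}$ into a concrete divisibility constraint on the $\GL$-translates of $h$. Descending to the finite-dimensional subgroups $\GL_n$ makes cyclic subrepresentations manageable; unique factorization in $S$ then translates finite-dimensionality of $W_n$ into projective-finiteness of the orbit of $p$, and connectedness of $\GL_n$ together with purity of $\ulambda$ closes out the contradiction.
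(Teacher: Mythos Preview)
Your argument is correct and reaches the same endpoint as the paper's proof---showing that the reduced denominator $h$ (or each of its irreducible factors) is $\GL$-invariant, hence a scalar by purity of $\ulambda$---but the mechanism for constraining the denominator is genuinely different. The paper works infinitesimally: since $w$ lies in a polynomial representation, $g w$ lies in the $K$-span of iterated Lie-algebra derivatives $D_1\cdots D_m w$, and the Leibniz rule forces each of these to have denominator a power of $h$; thus $g(h)\mid h^N$ directly. You instead work with the finite-dimensionality of the cyclic $\GL_n$-module generated by $w$, obtaining $g(h)\mid \prod_i g_{n,i}(h)$ and then tracking a single irreducible factor $p$ through the UFD. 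The paper's route is shorter and gives the cleaner divisibility $g(h)\mid h^N$, but it leaves the passage from ``$g(h)\mid h^N$ for all $g$'' to ``$h$ is $\GL$-invariant'' implicit; your version makes that step explicit via the irreducibility/connectedness of $\GL_n$ and the classification of one-dimensional polynomial $\GL$-representations. Both arguments ultimately rely on the same ingredients (coprimality of $f,h$, the UFD property of $S$, connectedness of $\GL_n$, and purity), so the difference is one of packaging: Lie-algebraic versus group-theoretic control of the orbit of $w$.
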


\begin{proof}
Relabeling, we assume $\Omega=K$ for notational simplicity. Note that $S$ is a polynomial ring over $K$. Let $x$ be an element of $\Frac(S)^{\pol}$, and write $x=f/h$ with $f,h \in S$ coprime polynomials. Let $g \in \GL$. Since $x$ belongs to a polynomial representation, $gx$ is contained in the span of elements of the form $D_1 D_2 \cdots D_m x$, where the $D_i$ are elements of the Lie algebra of $\GL$. The $D_i$'s act by derivations on the ring $S$, and so, by the Leibniz rule, the denominator of $D_1 \cdots D_m x$ is a power of $h$. We thus see that the denominator of $gx$ is a power of $h$ for all $g \in \GL$. This implies that $h$ is $\GL$-invariant, and therefore a scalar, and so $x \in S$ as required.
\end{proof}

\begin{proof}[Proof of Proposition~\ref{prop:ft-spread}]
By Theorem~\ref{thm:shift}, after shifting and shrinking $Y$, we can assume $Y$ is
elementary, i.e., $Y=B \times \bA^{\ulambda}$ with $B$ an irreducible variety and $\ulambda$
a pure tuple. Let $S=K[Y]$ and $R=K[X]$, so that $\phi$ corresponds to an injective
homomorphism $S \to R$. We have $S=K[B] \otimes Q$, where $Q=\Sym(\bV_{\ulambda})$. Let
$L=\Frac(S)$. Then, by assumption, $R_L=R \otimes_{S} L$ is a finitely generated $L$-algebra.
Let $x_1, \ldots, x_n \in R$ generate $R_L$ as an $L$-algebra. Shift further so that each $x_i$ is $\GL$-invariant, and let $R' \subset R$ be the $S$-subalgebra generated by the $x_i$'s.

Since the monomials in the $x_i$'s span $R_L$ as an $L$-vector space, we can choose some subset $E$ of them that form a basis. We thus have $R_L = \bigoplus_{e \in E} Le$. Since the $e$'s are $\GL$-invariant, this is a direct sum decomposition of $\GL$-representations over the coefficient field $K(B)$. We thus have $R_L^{\pol} = \bigoplus_{e \in E} L^{\pol} e$. By Lemma~\ref{lem:ft-spread}, we have $L^{\pol}=K(B) \otimes Q$. We thus see that $R_L^{\pol} = K(B) \otimes_{K[B]} R'$.

Let $y_1, \ldots, y_m$ be elements of $R$ whose $\GL$-orbits generate $R$ as a $K$-algebra. Since $y_i$ is an element of $R \subset R_L^{\pol}$, it belongs to $K(B) \otimes_{K[B]} R'$. Since there are finitely many $y$'s, there is a non-zero element $h \in K[B]$ such that each $y_i$ belongs to $R'[1/h]$. Since $R'[1/h]$ is $\GL$-stable, we see that $gy_i \in R'[1/h]$ for $g \in \GL$. Thus $R[1/h]=R'[1/h]$, and so $R[1/h]$ is generated over $S[1/h]$ by the $x_i$'s, which completes the proof.
\end{proof}

\subsection{Maps of relative dimension one}

We now prove the first key result needed for the proof of Theorem~\ref{thm:cdone}: a shift theorem for maps of relative dimension one.

\begin{proposition} \label{prop:rel-dim-one}
Let $\phi \colon X \to Y$ be a dominant map of irreducible affine $\GL$-varieties such that $\dim(X|Y)=1$. We can then find $n \ge 0$ and a $\GL$-invariant function $h$ on $Y$ such that $\Sh_n(Y)[1/h]$ is elementary and $\Sh_n(X)[1/h] \to \Sh_n(Y)[1/h]$ is of invariant finite presentation.
\end{proposition}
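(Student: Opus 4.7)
The plan is to reduce Proposition~\ref{prop:rel-dim-one} to Proposition~\ref{prop:ft-shift} by first showing that, after suitable shifting and localization of $Y$, the morphism $\phi$ becomes of finite type. By Proposition~\ref{prop:ft-spread}, it suffices to prove that the generic fiber $X_\eta = \Spec(K[X] \otimes_{K[Y]} K(Y))$ is of finite type over $K(Y)$. Once this is in hand, Proposition~\ref{prop:ft-shift} produces a diagram in which $\Sh_n(Y)[1/h]$ is elementary and the map to it takes the form $\psi \times \id \colon C \times \bA^{\ulambda} \to B \times \bA^{\ulambda}$; such a map is of invariant finite presentation by Proposition~\ref{prop:ifp-char}, since its degree-zero part is the map of classical varieties $K[B] \to K[C]$ and its base change along $K[B] \hookrightarrow K[B] \otimes \Sym(\bV_{\ulambda})$ recovers the entire map.

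To prove that $X_\eta$ is of finite type over $K(Y)$, I would translate the hypothesis $\dim(X|Y) = 1$ into a statement about the dimension function of $X_\eta$: the difference $\delta_X - \delta_Y$ is eventually the constant $1$, so the generic fiber is, in an appropriate sense, one-dimensional. Applying the shift theorem (Theorem~\ref{thm:shift}) to $X_\eta$ viewed as a $\GL$-variety yields a non-empty open subvariety of some shift $\Sh_n(X_\eta)$ isomorphic to $B \times \bA^{\ulambda}$ for a variety $B$ and a pure tuple $\ulambda$. A dimension-function comparison then forces $\ulambda = \emptyset$: if any partition in $\ulambda$ were non-trivial, then $\dim(\bA^{\ulambda}\{K^m\})$ would grow polynomially in $m$ of positive degree, contradicting the eventual constancy of the dimension function of $X_\eta$. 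Hence the dense open produced is just a classical variety $B$ of Krull dimension at most $1$, i.e., an ordinary curve.

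The main obstacle, and the remaining step, is to upgrade finite-type-ness from this dense open subvariety to all of $\Sh_n(X_\eta)$, whose underlying scheme is that of $X_\eta$. This is precisely where Theorem~\ref{thm:subalg}, the ``finiteness property of curves'' highlighted in the overview, enters the argument: in the curve setting it controls the full $\GL$-algebra in terms of a dense classical open, yielding that $X_\eta$ is of finite type over $K(Y)$. The hypothesis $\dim(X|Y) = 1$ is essential here, because without it the shift theorem would produce a dense open of the form $B \times \bA^{\ulambda}$ with $\ulambda$ non-empty—genuinely infinite-dimensional—and this kind of finiteness argument would have no reason to succeed. Once $X_\eta$ is known to be of finite type, Propositions~\ref{prop:ft-spread} and \ref{prop:ft-shift} deliver the desired conclusion as described in the first paragraph.
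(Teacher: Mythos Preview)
Your high-level strategy is exactly the paper's: show that the generic fiber $X_\eta$ is of finite type over $K(Y)$ (this is Lemma~\ref{lem:rel-dim-one}), then apply Proposition~\ref{prop:ft-spread} and Proposition~\ref{prop:ft-shift}. You also correctly identify Theorem~\ref{thm:subalg} as the finiteness input that makes the relative dimension one hypothesis decisive.

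The gap is in how you propose to get a dense open curve inside $X_\eta$. You write ``applying the shift theorem to $X_\eta$ viewed as a $\GL$-variety,'' but $X_\eta$ is not a $\GL$-variety in the sense of this paper. Its coordinate ring $K[X]\otimes_{K[Y]} K(Y)$ contains the fraction field $K(Y)$, and $K(Y)$ is not a polynomial representation of $\GL$ (compare Lemma~\ref{lem:ft-spread}, where the polynomial part of such a fraction field is computed and is strictly smaller). So Theorem~\ref{thm:shift} does not apply to $X_\eta$, and the phrase ``dimension function of $X_\eta$'' has no meaning here. The dimension argument forcing $\ulambda=\emptyset$ therefore does not get off the ground.

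The paper's Lemma~\ref{lem:rel-dim-one} avoids this by applying the shift theorem upstairs and downstairs separately, as honest $\GL$-varieties over $K$: first shrink $Y$ to $B\times\bA^{\ulambda}$, then shrink $X$ (by an invariant $h$ on $X$, not on $Y$) to $C\times\bA^{\umu}$. The structure theory of dominant maps between elementary $\GL$-varieties \cite[Proposition~6.1]{bdes} forces $\ulambda\subseteq\umu$, and the hypothesis $\dim(X|Y)=1$ then forces $\ulambda=\umu$ and $\dim C=\dim B+1$; after a further adjustment the map becomes $\psi\times\id$. Only now does one pass to the generic fiber: $X_\eta$ is integral, its localization $X_\eta[1/h]$ is a curve over $K(Y)$, and Theorem~\ref{thm:subalg} applied to the inclusion $K[X_\eta]\subset K[X_\eta][1/h]$ gives that $X_\eta$ itself is of finite type over $K(Y)$. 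The point is that the $\GL$-theoretic shift theorem is used before passing to the generic point, not after.
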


The proposition yields the following more explicit result:

\begin{corollary}
Keeping the notation from the proposition, there is a commutative diagram
\begin{displaymath}
\xymatrix@C=4em{
\Sh_n(X)[1/h] \ar@{=}[r] \ar[d]_{\phi} & C \times \bA^{\ulambda} \ar[d]^{p \times \id} \\
\Sh_n(Y)[1/h] \ar@{=}[r] & B \times \bA^{\ulambda} }
\end{displaymath}
where $\ulambda$ is a pure tuple, $B$ and $C$ are irreducible affine varieties, and $p$ is a dominant map of varieties.
\end{corollary}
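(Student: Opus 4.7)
The plan is to read off the corollary as a direct structural consequence of Proposition~\ref{prop:rel-dim-one} combined with Proposition~\ref{prop:ifp-over-elem}, being careful about irreducibility and dominance. First I would apply Proposition~\ref{prop:rel-dim-one} to the map $\phi$ to obtain an integer $n \ge 0$ and a $\GL$-invariant function $h$ on $Y$ so that $\Sh_n(Y)[1/h]$ is elementary and $\Sh_n(X)[1/h] \to \Sh_n(Y)[1/h]$ is of invariant finite presentation.

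Next I would fix an isomorphism $\Sh_n(Y)[1/h] \cong B \times \bA^{\ulambda}$ with $B$ an irreducible affine variety. Using the standard reduction recalled after Theorem~\ref{thm:shift}, namely that $\bA^{\ulambda}=\bA^m\times\bA^{\umu}$ and we may absorb the $\bA^m$ factor into~$B$, I can assume $\ulambda$ is pure. Since $Y$ is irreducible, so is $\Sh_n(Y)$ and hence its non-empty principal open $\Sh_n(Y)[1/h]$, which forces $B$ to be irreducible.

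Now I would invoke Proposition~\ref{prop:ifp-over-elem} applied to the morphism
\[
\Sh_n(X)[1/h] \longrightarrow \Sh_n(Y)[1/h] \cong B \times \bA^{\ulambda},
\]
which is of invariant finite presentation and has pure codomain tuple. The proposition yields an isomorphism $\Sh_n(X)[1/h] \cong C \times \bA^{\ulambda}$ under which the map becomes $p \times \id$ for some morphism of varieties $p \colon C \to B$. Irreducibility of $C$ follows because $\Sh_n(X)[1/h]$ is irreducible (as $X$ is) and $C \times \bA^{\ulambda}$ is irreducible only if $C$ is.

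Finally I would check that $p$ is dominant. Since $\phi$ is dominant, so is the induced map $\Sh_n(X)[1/h] \to \Sh_n(Y)[1/h]$ (shifting and localizing at a function non-vanishing on a dense open preserve dominance). Under the identifications above this map is $p \times \id$, whose image is $\ol{\im(p)} \times \bA^{\ulambda}$ in the scheme-theoretic closure; density in $B \times \bA^{\ulambda}$ therefore forces $\ol{\im(p)}=B$. There is really no substantive obstacle here: the content is entirely packaged into Proposition~\ref{prop:rel-dim-one} and Proposition~\ref{prop:ifp-over-elem}, and the only mild care needed is in the bookkeeping that makes $\ulambda$ pure and in verifying that irreducibility and dominance descend to the finite-dimensional factor.
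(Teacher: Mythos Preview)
Your proposal is correct and follows exactly the paper's own approach: the paper's proof is the single sentence ``This follows from the proposition and Proposition~\ref{prop:ifp-over-elem},'' and you have simply unpacked this, together with the routine bookkeeping (purity of $\ulambda$, irreducibility of $B$ and $C$, dominance of $p$) that the paper leaves implicit.
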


\begin{proof}
This follows from the proposition and Proposition~\ref{prop:ifp-over-elem}.
\end{proof}

To prove the proposition, we will require the following theorem. A proof can be found in \cite[Theorem A, part
(2)]{wadsworth}; the author mentions that the result was likely already well-known. We thank Christopher Heng Chiu for pointing out this reference to us.

\begin{theorem} \label{thm:subalg}
Let $\Omega$ be a field and let $A$ be an $\Omega$-algebra that is integral, finitely generated and one-dimensional. Then any $\Omega$-subalgebra of $A$ is finitely generated.
\end{theorem}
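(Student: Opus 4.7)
The plan is to split on whether $B$ contains an element transcendental over $\Omega$, then, in the transcendental case, to use generic finiteness together with a local one-dimensional Noetherian gluing argument.

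First I would dispose of the algebraic case: if every element of $B$ is algebraic over $\Omega$, then $B$ lies inside the algebraic closure $\Omega'$ of $\Omega$ in $L:=\Frac(A)$; since $L/\Omega$ is a finitely generated field extension of transcendence degree one, $\Omega'/\Omega$ is finite, and a subalgebra of a finite-dimensional $\Omega$-vector space is automatically finitely generated. Henceforth assume $B$ contains some $t$ transcendental over $\Omega$, so $\Omega[t]\subseteq B\subseteq A$ and the extension $L/\Omega(t)$ is finite. Because $A$ is a finitely generated $\Omega[t]$-algebra of generically finite fraction-field degree, generic finiteness produces a nonzero $f\in\Omega[t]$ such that $A[1/f]$ is a finitely generated module over the PID $\Omega[t,1/f]$. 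Since submodules of finitely generated modules over a PID are themselves finitely generated, $B[1/f]\subseteq A[1/f]$ is a finitely generated $\Omega[t,1/f]$-module, and hence a finitely generated $\Omega$-algebra.

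Next I would choose algebra generators $y_i=x_i/f^{n_i}$ of $B[1/f]$ with $x_i\in B$ and set $B_0:=\Omega[t,x_1,\ldots,x_k]\subseteq B$. Then $B_0[1/f]=B[1/f]$, so every $b\in B$ satisfies $f^Nb\in B_0$ for some $N=N(b)$; equivalently, the $B_0$-module $B/B_0$ is $f$-power torsion. The main obstacle is to bound $N$ uniformly in $b$, equivalently to show that $B/B_0$ is finitely generated as a $B_0$-module. Once this is in hand, $B$ is a finite $B_0$-module extension of the finitely generated $\Omega$-algebra $B_0$, and is therefore itself a finitely generated $\Omega$-algebra.

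To obtain the finite generation of $B/B_0$, I would exploit one-dimensionality. Since $B_0$ is a one-dimensional Noetherian domain and $f$ is nonzero, the vanishing locus $V(f)\subseteq\Spec(B_0)$ consists of finitely many closed points $\mathfrak{p}_1,\ldots,\mathfrak{p}_s$, and the $f$-power-torsion module $B/B_0$ decomposes as the finite direct sum of its localizations at these primes. Over a fixed $\mathfrak{p}_i$ the primes of $A$ lying above it are also finitely many, and the chain $(B_0)_{\mathfrak{p}_i}\subseteq B_{\mathfrak{q}}\subseteq A_{\mathfrak{r}}$ embeds the local contribution of $B$ into a one-dimensional Noetherian semilocal ring inside the finite field extension $L/\Frac((B_0)_{\mathfrak{p}_i})$. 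A combined application of Krull--Akizuki (to the one-dimensional Noetherian domain $(B_0)_{\mathfrak{p}_i}$ sitting inside the finite extension $L$) and the generic-finiteness bound then forces the local contribution to $B/B_0$ to be finitely generated over $(B_0)_{\mathfrak{p}_i}$; summing the finitely many local pieces yields the theorem.
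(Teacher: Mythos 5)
The paper itself does not supply a proof of this statement; it cites Wadsworth's Theorem A, part (2). So there is no ``paper proof'' to compare against, and I will simply assess your argument on its own terms. The opening moves are fine: the algebraic case, the generic-finiteness reduction producing $f\in\Omega[t]$ with $A[1/f]$ finite over the PID $\Omega[t,1/f]$, the conclusion that $B[1/f]$ is a finitely generated $\Omega$-algebra, the construction of a finitely generated $B_0\subseteq B$ with $B_0[1/f]=B[1/f]$, and the observation that it suffices to prove $B/B_0$ is a finite $B_0$-module. The direct sum decomposition of the $f$-power-torsion module $B/B_0$ over the finitely many maximal ideals in $V(f)$ is also legitimate.

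The gap is the final Krull--Akizuki step, which is asserted rather than proved and in fact can fail. Krull--Akizuki applied to $(B_0)_{\mathfrak{p}_i}\subseteq B'\subseteq L$ gives that $B'$ is Noetherian and that $B'/I$ is finite over $(B_0)_{\mathfrak{p}_i}$ for every \emph{nonzero ideal} $I$ of $B'$; but $(B_0)_{\mathfrak{p}_i}$ is not an ideal of $B'$, so this does not yield finiteness of $B'/(B_0)_{\mathfrak{p}_i}$ unless one already knows some nonzero ideal of $B'$ lands inside $(B_0)_{\mathfrak{p}_i}$ --- and producing such an ideal is exactly the uniform bound on $N$ you declared to be the main obstacle, so the argument is circular. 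Indeed, the conclusion can be outright false for a $B_0$ built by your recipe: take $\Omega=k$, $A=B=k[x,1/x]$, $t=f=x$; writing the generator $1/x$ of $B[1/x]$ as $x_2/f^{n_2}$ with $x_2=1\in B$, $n_2=1$ (permitted, since you do not require $n_i$ minimal) gives $B_0=k[x]$, and then $B/B_0\cong k[x,1/x]/k[x]$ is $x$-divisible, not finitely generated over $k[x]$; its localization at $(x)$ is $k(x)/k[x]_{(x)}$, also not finitely generated. The appeal to a ``generic-finiteness bound'' does not repair this, because $A$ localized away from $\mathfrak{p}_i$ may well be the whole fraction field $L$ (as happens in this example), putting no constraint on $B'$ inside $L$. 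To make a proof along these lines work you would have to choose $B_0$ so that it actually contains a conductor for $B$, and arranging that is essentially the entire content of the theorem.
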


This theorem is applied in the following key lemma.

\begin{lemma} \label{lem:rel-dim-one}
Let $\phi \colon X \to Y$ be as in Proposition~\ref{prop:rel-dim-one}, and let $\eta$ be the generic point of $Y$. Then $X_{\eta}$ is of finite type over $K(Y)$.
\end{lemma}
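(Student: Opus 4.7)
The plan is to apply the decomposition theorem to $\phi$ to extract an open $G(n)$-elementary piece, use the hypothesis $\dim(X|Y)=1$ to force this piece to have the shape $\psi\times\id$ for a morphism $\psi$ of finite-dimensional varieties of relative dimension one, and then invoke Theorem~\ref{thm:subalg} to transfer finite generation from the generic fiber of this piece to $X_\eta$ itself.

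By Theorem~\ref{thm:decomp} applied at the generic point of $X$, there exist $n\ge 0$ and $G(n)$-stable open subsets $U\subseteq\Sh_n(X)$ and $V\subseteq\Sh_n(Y)$ with $\phi(U)\subseteq V$ such that $\phi|_U\colon U\to V$ is $G(n)$-elementary; identify this restriction with a morphism $\psi\times\pi\colon C\times\bA^{\ulambda}\to B\times\bA^{\umu}$, where $\psi\colon C\to B$ is a surjection of irreducible affine varieties, $\umu\subseteq\ulambda$ are pure tuples, and $\pi$ is projection. Dimension functions are preserved under shifting and by passage to dense open subvarieties, so $\dim(U|V)=\dim(X|Y)=1$, which yields
\[
(\dim C-\dim B)+\sum_{\lambda\in\ulambda\setminus\umu}\dim\bV_{\lambda}(K^m)\;=\;1\qquad\text{for }m\gg 0.
\]
Each polynomial $\dim\bV_{\lambda}(K^m)$ with $\lambda\ne\emptyset$ has degree $|\lambda|\ge 1$ and positive leading coefficient, so their sum can equal a constant only if it is empty. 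Hence $\ulambda=\umu$ and $\dim C=\dim B+1$; so $\psi$ is a dominant morphism of relative dimension one of finite-dimensional varieties, and $\phi|_U$ takes the form $\psi\times\id$.

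Next I would compute the generic fiber. Since $V$ is a dense open subset of $\Sh_n(Y)$, which coincides with $Y$ as a scheme, the generic point $\eta$ of $Y$ lies in $V$. Using that $K[U]=K[V]\otimes_{K[B]}K[C]$, a direct computation yields
\[
K[U_\eta]\;=\;K[U]\otimes_{K[V]}K(V)\;=\;K(V)\otimes_{K(B)}K[C_{\eta_B}],
\]
where $C_{\eta_B}$ is the generic fiber of $\psi$. Now $K[C_{\eta_B}]$ is an integral, finitely generated $K(B)$-algebra of Krull dimension one, and $K(V)=K(B)(\{y_i\})$ is a purely transcendental extension of $K(B)$; base change along such an extension preserves all three properties, so $K[U_\eta]$ is an integral, finitely generated, one-dimensional $K(Y)$-algebra. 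Since $U$ is a dense open subvariety of $\Sh_n(X)=X$ and $K(Y)$ is flat over $K[Y]$, the restriction map $K[X]\hookrightarrow K[U]$ base changes to an injection $K[X_\eta]\hookrightarrow K[U_\eta]$, realizing $K[X_\eta]$ as a $K(Y)$-subalgebra of $K[U_\eta]$. Theorem~\ref{thm:subalg}, applied with $\Omega=K(Y)$ and $A=K[U_\eta]$, then delivers finite generation of $K[X_\eta]$ over $K(Y)$.

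The main obstacle is ruling out the possibility $\ulambda\supsetneq\umu$: otherwise $K[U_\eta]$ would acquire an extra polynomial factor $\Sym(\bV_{\ulambda\setminus\umu})$ of infinite Krull dimension, placing it outside the scope of Theorem~\ref{thm:subalg}. The hypothesis $\dim(X|Y)=1$ is precisely what forces $\ulambda=\umu$, and this is why the relative dimension one condition is essential to the argument.
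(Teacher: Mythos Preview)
Your proof is correct and follows essentially the same route as the paper's. Both arguments (i) produce, after shifting, a dense open piece on which $\phi$ has the form $\psi\times\id$ for a dominant map $\psi$ of finite-dimensional varieties, (ii) use $\dim(X|Y)=1$ to force $\ulambda=\umu$ and $\dim C=\dim B+1$, and (iii) observe that the generic fiber of this piece is an integral one-dimensional finitely generated $K(Y)$-algebra containing $K[X_\eta]$ as a subalgebra, so Theorem~\ref{thm:subalg} finishes. The only cosmetic difference is in step (i): the paper applies the shift theorem separately to $Y$ and to $X$ and then invokes \cite[Proposition~6.1]{bdes} to straighten the resulting map into the form $\psi\times\id$, whereas you reach the same conclusion by quoting the decomposition theorem (Theorem~\ref{thm:decomp}) at the generic point of $X$, which already packages an elementary chart of the required shape.
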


\begin{proof}
We are free to shift and to shrink $Y$. We can thus assume by Theorem~\ref{thm:shift} that $Y=B \times \bA^{\ulambda}$ for some irreducible variety $B$ and pure tuple~$\ulambda$. By further shifting, we can also assume that there is a nonzero invariant function $h$ on $X$, irreducible variety $C$, pure tuple $\umu$ and isomorphism $\gamma\colon C \times \bA^{\umu}\to X[1/h]$. Since $\phi\circ\gamma\colon C \times \bA^{\umu}\to B \times \bA^{\ulambda}$ is dominant, we have $\ulambda\subseteq\umu$ by \cite[Proposition 6.1]{bdes}. From the fact $\dim(X|Y)=1$, we conclude that $\ulambda=\umu$ and $\dim(C)=1+\dim(B)$. Moreover \cite[Proposition 6.1]{bdes} shows that, after shrinking some more and composing with an automorphism of $\bA^{\ulambda}$, we can assume that $\phi\circ\gamma$ has the form $\psi \times \id$ where $\psi \colon C \to B$ is a dominant morphism; note that $C$ is generically a curve over $B$. Now, $X_{\eta}$ is easily seen to be an integral scheme over $K(Y)$. By the above, $X_{\eta}[1/h]$ is a curve over $K(Y)$, that is, it has finite type and dimension~1. By Theorem~\ref{thm:subalg}, we conclude that $X_{\eta}$ is also a curve over $K(Y)$, which completes the proof.
\end{proof}

\begin{proof}[Proof of Proposition~\ref{prop:rel-dim-one}]
We are free to shift $\phi$ and shrink $Y$. By Lemma~\ref{lem:rel-dim-one}, the generic fiber of $X$ is of finite type over $K(Y)$. After shifting and shrinking $Y$, Proposition~\ref{prop:ft-spread} shows that $X \to Y$ is of finite type. The result now follows from Proposition~ \ref{prop:ft-shift}.
\end{proof}

\begin{remark}
Theorem~\ref{thm:subalg} is false in higher dimension, e.g., $K[x,y]$ contains subalgebras that are not finitely generated. We expect Proposition~\ref{prop:rel-dim-one} holds in higher finite relative dimension, but we are only able to prove it in relative dimension one since we make use of Theorem~\ref{thm:subalg}.
\end{remark}

\subsection{Relativizing subvarieties}

We now prove the second key result needed for our proof of Theorem~\ref{thm:cdone}.

\begin{proposition} \label{prop:sub-dim-zero}
Let $X$ be an irreducible affine $\GL$-variety and let $Z$
be an irreducible closed $\GL$-subvariety. We can then find
$n \ge 0$, a $\GL$-invariant function $h$ on $X$ such that
$h \vert_Z \ne 0$, an elementary $\GL$-variety $Y$, and a
map of $\GL$-varieties $\phi \colon \Sh_n(X)[1/h] \to Y$
such that $\phi \vert_{\Sh_n(Z)[1/h]}$ is dominant, of relative dimension zero, and of invariant finite presentation.
\end{proposition}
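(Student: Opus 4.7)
The plan is to apply the shift theorem (Theorem~\ref{thm:shift}) to $Z$: this yields $n \ge 0$, a nonzero $\GL$-invariant $h' \in K[\Sh_n(Z)]^{\GL}$, an irreducible affine variety $B$, and a pure tuple $\ulambda$ with $\Sh_n(Z)[1/h'] \cong B \times \bA^{\ulambda}$. Setting $d := \dim B$, we will take the elementary target to be $Y := \bA^d \times \bA^{\ulambda}$, and construct $\phi$ by lifting three pieces of data from $K[\Sh_n(Z)[1/h']]$ to a suitable localization of $K[\Sh_n(X)]$: the function $h'$, a transcendence basis of $K[B]$, and the standard copy of $\bV_{\ulambda}$ sitting inside $K[B] \otimes \Sym(\bV_{\ulambda})$.

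The lifting is possible because polynomial $\GL$-representations are semisimple and all coordinate rings in sight remain polynomial representations even after inverting an invariant: $K[\Sh_n(X)][1/h]$ is the directed union of the finite-dimensional polynomial subrepresentations $h^{-k} \cdot V$ for $V \subset K[\Sh_n(X)]$, hence is itself ind-polynomial and thus semisimple. Applying exactness of the $\GL$-invariants functor to the $\GL$-equivariant surjection $K[\Sh_n(X)] \twoheadrightarrow K[\Sh_n(Z)]$ first lifts $h'$ to an invariant $h \in K[\Sh_n(X)]^{\GL}$, and by construction $h|_Z = h' \neq 0$ and $\Sh_n(Z)[1/h] = B \times \bA^{\ulambda}$. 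Next, choose a transcendence basis $a_1, \ldots, a_d \in K[B]$ of $K(B)/K$ and lift it via the induced invariant surjection $(K[\Sh_n(X)][1/h])^{\GL} \twoheadrightarrow (K[\Sh_n(Z)][1/h])^{\GL} = K[B]$ to $\tilde a_1, \ldots, \tilde a_d \in (K[\Sh_n(X)][1/h])^{\GL}$. Finally, lift the natural inclusion $\bV_{\ulambda} \hookrightarrow K[B] \otimes \Sym(\bV_{\ulambda})$ to a $\GL$-equivariant linear map $\bV_{\ulambda} \to K[\Sh_n(X)][1/h]$. Assembling these data gives an algebra homomorphism $K[\bA^d] \otimes \Sym(\bV_{\ulambda}) \to K[\Sh_n(X)][1/h]$, hence the desired $\GL$-morphism $\phi \colon \Sh_n(X)[1/h] \to Y$.

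It remains to verify the three properties of $\phi|_{\Sh_n(Z)[1/h]}$. By construction this restriction coincides with $j \times \id \colon B \times \bA^{\ulambda} \to \bA^d \times \bA^{\ulambda}$, where $j \colon B \to \bA^d$ is the map defined by the $a_i$. Algebraic independence of $a_1, \ldots, a_d$ makes $K[\bA^d] \to K[B]$ injective, so $j$, and therefore $j \times \id$, is dominant; since $\dim B = d$, the relative dimension is $0$; and since $j \times \id$ is the base change of the finitely presented map $K[\bA^d] \to K[B]$ along $K[\bA^d] \to K[\bA^d] \otimes \Sym(\bV_{\ulambda})$, it is of invariant finite presentation by the criterion of Proposition~\ref{prop:ifp-char}.

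The main conceptual point, which also pinpoints why one cannot simply recover Theorem~\ref{thm:cdone} from this proposition by shift-theoretic bookkeeping, is the decision to land in $\bA^d \times \bA^{\ulambda}$ rather than $B \times \bA^{\ulambda}$: a direct attempt to lift the structural ring homomorphism $K[B] \to (K[\Sh_n(Z)[1/h']])^{\GL}$ to a ring homomorphism into $(K[\Sh_n(X)[1/h]])^{\GL}$ would have to preserve all polynomial relations among a chosen set of generators of $K[B]$, and semisimplicity gives no such guarantee. By lifting only an algebraically independent transcendence basis we sidestep any relations to be preserved, while still retaining enough information about $Z$ to force both dominance and equality of dimensions on the restriction.
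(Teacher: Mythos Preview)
Your proof is correct and follows essentially the same approach as the paper's: apply the shift theorem to $Z$ to get $\Sh_n(Z)[1/h]\cong B\times\bA^{\ulambda}$, lift a transcendence basis of $K[B]$ and the copy of $\bV_{\ulambda}$ through the surjection $K[\Sh_n(X)][1/h]\twoheadrightarrow K[\Sh_n(Z)][1/h]$, and take $Y=\bA^{\dim B}\times\bA^{\ulambda}$. The only differences are expository---you spell out the lifting of $h'$ to $h$ and the semisimplicity argument justifying the lifts, whereas the paper absorbs these into the phrase ``after shifting and shrinking''---and your final paragraph is (accurate) commentary rather than proof.
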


\begin{proof}
After shifting and shrinking, we can assume $Z=B \times \bA^{\ulambda}$, with $B$ an
irreducible variety and $\ulambda$ a pure tuple. We thus have $K[Z]=K[B] \otimes_K
\Sym(\bV_{\ulambda})$. Let $x_1, \ldots, x_n$ be algebraically independent elements of
$K[B]$, where $n=\dim(B)$. We have a surjection $K[X] \to K[Z]$. Let $\tilde{x}_i$ be a lift
of $x_i$ to $K[X]^{\GL}$ and choose a map of representations $\bV_{\ulambda} \to K[X]$
lifting the given map $\bV_{\ulambda} \to K[Z]$. Let $R=K[t_1, \ldots, t_n] \otimes
\Sym(\bV_{\ulambda})$. We thus have a map of $\GL$-algebras $R \to K[X]$. We take
$Y=\Spec(R)$ and let $\phi \colon X \to Y$ be the map corresponding to the said algebra
homomorphism. Since the composition $R \to K[X] \to K[Z]$ is injective by construction, it
follows that $\phi \vert_Z$ is dominant. It is clear from the construction that $R \to K[Z]$
is of invariant finite presentation. It is also clear that $Z\{K^n\}$ and $Y\{K^n\}$ have the same dimension for all $n$, and so $\phi \vert_Z$ has relative dimension~0.
\end{proof}

The proposition yields the following more explicit statement:

\begin{corollary}
With notation as in the proposition, there is a commutative diagram
\begin{displaymath}
\xymatrix@C=4em{
\Sh_n(Z)[1/h] \ar[d] \ar@{=}[r] & B \times \bA^{\ulambda} \ar[d]^{p \times \id} \\
\Sh_n(X)[1/h] \ar[r]^{\phi} & C \times \bA^{\ulambda} }
\end{displaymath}
where $\ulambda$ is a pure tuple, $B$ and $C$ are irreducible affine varieties, and $p$ is a dominant generically finite map of varieties.\
\end{corollary}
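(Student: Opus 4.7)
The plan is to extract the desired product structure on $\Sh_n(Z)[1/h]$ by feeding the output of Proposition~\ref{prop:sub-dim-zero} into the structural result Proposition~\ref{prop:ifp-over-elem}, which promotes morphisms of invariant finite presentation into an elementary target into genuine product morphisms.

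First I would invoke Proposition~\ref{prop:sub-dim-zero} to obtain $n$, the invariant $h$, an elementary $\GL$-variety $Y$, and a morphism $\phi \colon \Sh_n(X)[1/h] \to Y$ whose restriction to $\Sh_n(Z)[1/h]$ is dominant, of relative dimension zero, and of invariant finite presentation. By the definition of elementary and the paragraph following Theorem~\ref{thm:shift} (which absorbs any empty partitions into the finite dimensional base), I may write $Y = C \times \bA^{\ulambda}$ with $C$ an irreducible affine variety and $\ulambda$ pure. Applying Proposition~\ref{prop:ifp-over-elem} to the restricted morphism $\phi \vert_{\Sh_n(Z)[1/h]} \to C \times \bA^{\ulambda}$ then produces an isomorphism $\Sh_n(Z)[1/h] \cong B \times \bA^{\ulambda}$ under which this restriction is identified with $p \times \id$ for some $p \colon B \to C$, placing the desired diagram underneath the unrestricted $\phi$.

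It remains to verify the adjectives appearing in the conclusion. The variety $C$ is irreducible by the choice of $Y$. Since $Z$ is irreducible so is $\Sh_n(Z)$, and the hypothesis $h \vert_Z \ne 0$ makes $\Sh_n(Z)[1/h]$ a non-empty open subvariety of an irreducible variety, hence irreducible; this forces $B$ to be irreducible as well. Dominance of $\phi \vert_{\Sh_n(Z)[1/h]}$ passes to dominance of $p$ through the product decomposition. Finally, the condition $\dim(\Sh_n(Z)[1/h]\,|\,Y)=0$ unfolds, via the product form of the dimension functions, into $\dim B = \dim C$; combined with dominance, this makes $p$ generically finite.

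I do not expect a real obstacle here: the corollary is essentially a translation of Proposition~\ref{prop:sub-dim-zero} through the lens of Proposition~\ref{prop:ifp-over-elem}, and once the product structure is in hand, checking irreducibility, dominance, and generic finiteness of $p$ is bookkeeping.
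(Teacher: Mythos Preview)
Your proposal is correct and matches the paper's approach exactly: apply Proposition~\ref{prop:ifp-over-elem} to the restriction $\phi\vert_{\Sh_n(Z)[1/h]}$ landing in the elementary target $Y = C \times \bA^{\ulambda}$ furnished by Proposition~\ref{prop:sub-dim-zero}. The paper's proof is terser and leaves the verification of irreducibility, dominance, and generic finiteness of $p$ implicit, but your added bookkeeping on those points is correct.
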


\begin{proof}
The $\GL$-variety $Y$ from Proposition~\ref{prop:sub-dim-zero} is
elementary, hence of the form $C \times \bA^{\ulambda}$. Now
apply Proposition~\ref{prop:ifp-over-elem} to the map $\phi
\vert_{\Sh_n(Z)[1/h]}: \Sh_n(Z)[1/h] \to C \times \bA^{\ulambda}$ to
conclude that $\Sh_n(Z)[1/h]$ is of the form $B \times \bA^{\ulambda}$
and the map is of the desired form.
\end{proof}

\subsection{Proof of Theorem~\ref{thm:cdone}} \label{ss:cdone-proof}

We now have all the tools in place to prove the theorem. Let $Z \subset X$ be given as in Theorem~\ref{thm:cdone}, i.e., $X$ is an irreducible affine $\GL$-variety and $Z$ is an irreducible closed $\GL$-subvariety of codimension one, in the sense that $\dim(X|Z)=1$. To prove the theorem, we are free to apply the shift functor and replace $X$ and $Z$ with $X[1/h]$ and $Z[1/h]$ where $h$ is an invariant function on $X$ with non-zero restriction to $Z$.

Applying Proposition~\ref{prop:sub-dim-zero}, after shifting and shrinking we can find a dominant map $\phi \colon X \to Y$, where $Y$ is an elementary $\GL$-variety, such that $\phi \vert_Z$ is dominant, of relative dimension zero, and of invariant finite presentation. Since $\dim(X|Z)=1$ and $\dim(Z|Y)=0$, we conclude that $\dim(X|Y)=1$. Applying Proposition~\ref{prop:rel-dim-one}, after shifting and shrinking $Y$ (and replacing $X$ and $Z$ with their base changes over the shrunken $Y$), we can assume that $Y$ is elementary and $X \to Y$ is of invariant finite presentation; this implies $X$ is elementary (Proposition~\ref{prop:ifp-over-elem}). By Proposition~\ref{prop:ifp-trans}, we see that $Z \to X$ is of invariant finite presentation. Since $X$ is elementary, the result now follows from Proposition~\ref{prop:ifp-over-elem}.

\section{Unirationality for pairs} \label{s:uni-pair}

\subsection{Statement of results}

In the previous section, we proved the shift and unirationality theorem for pairs $Z \subset X$ when $Z$ has codimension one. We have not been able to prove the shift theorem for more general pairs. However, we can prove the unirationality theorem in much greater generality; this is the main result of this section:

\begin{theorem} \label{thm:uni-pair}
Let $X$ be an irreducible $\GL$-variety and let $Z$ be an irreducible closed $\GL$-subvariety. Then we can find a commutative diagram
\begin{displaymath}
\xymatrix@C=4em{
B \times \bA^{\ulambda} \ar[r]^{j \times \id} \ar[d]_{\psi} &
C \times \bA^{\ulambda} \ar[d]^{\phi} \\
Z \ar[r] & X }
\end{displaymath}
where $C$ is an irreducible affine variety, $B$ is an irreducible closed subvariety of $C$ of
codimension one, $j \colon B \to C$ is the inclusion, $\ulambda$ is a pure tuple, and $\phi$ and $\psi$ are dominant morphisms of $\GL$-varieties.
\end{theorem}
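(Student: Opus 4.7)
The plan is to reduce the general statement to the codimension-one case already handled by Corollary~\ref{cor:shift-pair} by blowing up $X$ along $Z$. If $Z=X$, the conclusion is the ordinary unirationality theorem of \cite{bdes} applied to $X$, so assume $Z\subsetneq X$. Throughout we are free to replace $(X,Z)$ by $(\Sh_n(X)[1/h],\Sh_n(Z)[1/h])$ for any $n\ge 0$ and any invariant $h\in K[X]$ with $h|_Z\ne 0$, since the natural maps $\Sh_n(X)[1/h]\to X$ and $\Sh_n(Z)[1/h]\to Z$ are dominant morphisms of $\GL$-varieties. After such a shift and shrink we may assume $X$ is affine; let $I\subset K[X]$ be the $\GL$-stable defining ideal of $Z$.

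We next develop just enough blow-up theory for $\GL$-varieties to handle our situation. The Rees scheme $\tilde X=\Proj\bigoplus_{n\ge 0} I^n$ carries a natural $\GL$-action for which $\pi\colon\tilde X\to X$ is equivariant, and the exceptional subscheme $E=\pi^{-1}(Z)$ is $\GL$-stable and locally principal on $\tilde X$. To obtain a $\GL$-stable affine chart, we shift further until $I$ acquires a nonzero $\GL$-invariant element $f$; then $U_f=\Spec K[X][I/f]$ is a $\GL$-stable affine open of $\tilde X$ and is itself an affine $\GL$-variety (being $\GL$-finitely generated over $K[X]$), on which the exceptional divisor is principal, cut out by $f$, hence of codimension one. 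Choose an irreducible component $\tilde X'$ of $U_f$ dominating $X$ under $\pi$, and an irreducible component $E'$ of $E\cap \tilde X'$ dominating $Z$ under $\pi$; such $E'$ exists by the classical fact that the exceptional divisor of a blow-up of an irreducible scheme along an irreducible proper subscheme surjects onto its center, and both $\tilde X'$ and $E'$ are $\GL$-stable because the connected ind-group $\GL$ cannot non-trivially permute irreducible components.

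Now Corollary~\ref{cor:shift-pair} applies to the codimension-one pair $E'\subset\tilde X'$ and, after a last shift and shrink, produces a commutative diagram
\begin{displaymath}
\xymatrix@C=4em{
B\times\bA^{\ulambda}\ar[r]^{j\times\id}\ar[d]_{\psi'} & C\times\bA^{\ulambda}\ar[d]^{\phi'}\\
E'\ar[r] & \tilde X'
}
\end{displaymath}
with $\phi',\psi'$ dominant morphisms of $\GL$-varieties, $B$ an irreducible closed subvariety of $C$ of codimension one, and $j$ the inclusion. Composing $\phi'$ with the dominant $\pi|_{\tilde X'}\colon \tilde X'\to X$ and $\psi'$ with the dominant $\pi|_{E'}\colon E'\to Z$ yields the diagram required by the theorem.

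The main obstacle is the second paragraph: although the Rees construction and Proj formalism transport routinely to the $\GL$-equivariant setting, producing a $\GL$-stable affine chart on which the exceptional divisor is principal requires a shifting argument to bring an invariant element into $I$, and one must then check $\GL$-stability of the irreducible components chosen. As flagged in the introduction, this is done ad hoc rather than by appeal to a general blow-up theory in the $\GL$-variety framework.
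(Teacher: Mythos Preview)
Your overall strategy matches the paper's: blow up $X$ along $Z$ to reduce to the codimension-one situation, then invoke Corollary~\ref{cor:shift-pair}. However, there is a genuine gap in how you select the affine chart. You pick an arbitrary nonzero $\GL$-invariant $f\in I$ (after shifting) and then assert that some component of $E\cap U_f$ dominates $Z$, citing the ``classical fact'' that the exceptional divisor surjects onto its center. But surjectivity of the \emph{full} exceptional divisor $E\to Z$ does not force $E\cap U_f\to Z$ to be dominant for an arbitrary $f$: if the image of $f$ in $I\otimes_S k(\eta_Z)$ vanishes, the fibre of $\pi$ over the generic point $\eta_Z$ of $Z$ misses $U_f$ entirely. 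Concretely, with $X=\bA^3$, $I=(x,y)$ and $f=x^2\in I$, one computes $S[I/f]\cong k[z,u^{\pm1},v]$ with $x=1/u$, so $\pi(U_f)\subset\{x\ne0\}$ and $U_f$ does not meet the exceptional locus at all. Moreover, in the infinite-dimensional $\GL$-setting the surjectivity of $\Proj(\bigoplus I^n)\to X$ is itself not a classical fact but requires proof; this is precisely Proposition~\ref{prop:surj}, established via the $\GL$-Chevalley theorem.

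The paper repairs this by reversing your order of operations: first use Proposition~\ref{prop:surj} (via Proposition~\ref{prop:blowup}(b)) to choose $f\in I$ so that $\eta_Z\in\pi(U_f)$, and only \emph{then} shift so that this particular $f$ becomes $G(n)$-invariant. Two minor remarks: you should take $I$ to be a finitely $\GL$-generated ideal with $V(I)=Z$ (possible by \cite{draisma}) rather than ``the defining ideal'', since the radical ideal need not be finitely $\GL$-generated and you need finite $\GL$-generation for $S[I/f]$ to be a $\GL$-algebra of the right kind; and since $S[I/f]\subset\Frac(S)$ is a domain, $U_f$ is already irreducible, so choosing a component $\tilde X'$ is superfluous.
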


As a corollary, we obtain the following refinement of the unirationality theorem:

\begin{corollary} \label{cor:uni-pair}
Let $X$ be an irreducible $\GL$-variety and let $x \in X$. Then there exists a dominant map $\phi \colon C \times \bA^{\ulambda} \to X$, with $C$ and $\ulambda$ as in the theorem, such that $x \in \im(\phi)$.
\end{corollary}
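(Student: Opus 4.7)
The plan is to apply Theorem~\ref{thm:uni-pair} with $Z$ taken to be the $\GL$-orbit closure of $x$, and then use Chevalley's theorem (Theorem~\ref{thm:chev}) to upgrade ``$\psi$ dominant onto $Z$'' to ``$x$ lies in the image''.

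First I would set $Z := \overline{\GL \cdot x}$. Since $\GL = \bigcup_n \GL_n(K)$ is an ascending union of connected groups, the orbit $\GL \cdot x$ is irreducible, and hence so is its closure $Z$. By construction $Z$ is a closed $\GL$-stable (reduced) subvariety of $X$, so it is an irreducible closed $\GL$-subvariety. Applying Theorem~\ref{thm:uni-pair} to the pair $Z \subseteq X$ then produces the commutative diagram of that theorem, in which $\phi \colon C \times \bA^{\ulambda} \to X$ is dominant and $\psi \colon B \times \bA^{\ulambda} \to Z$ is dominant. Commutativity gives $\im(\psi) \subseteq \im(\phi)$, so it suffices to prove $x \in \im(\psi)$.

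To obtain this last containment I would invoke Theorem~\ref{thm:chev}: $\im(\psi)$ is a $\GL$-constructible subset of $Z$, and it is dense in $Z$ because $\psi$ is dominant. Writing $\im(\psi)$ as a finite union of $\GL$-stable locally closed pieces and using irreducibility of $Z$, one of these pieces $L$ must satisfy $\overline{L} = Z$; since $L$ is both locally closed and dense in the irreducible variety $Z$, it is a nonempty $\GL$-stable open subset $U \subseteq \im(\psi)$ of $Z$. Density of $\GL \cdot x$ in $Z$ then forces $U \cap \GL \cdot x \neq \emptyset$, so $g \cdot x \in U \subseteq \im(\psi)$ for some $g \in \GL$, and $\GL$-stability of $\im(\psi)$ finally yields $x \in \im(\psi) \subseteq \im(\phi)$.

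There is no genuine obstacle in the corollary itself: the entire difficulty has been absorbed into Theorem~\ref{thm:uni-pair}. The only new step is the elementary extraction of a $\GL$-stable Zariski-open subset from a dense $\GL$-constructible image, combined with the standard fact that the $\GL$-orbit of $x$ is dense in $Z$.
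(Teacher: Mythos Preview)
Your proof is correct and follows exactly the paper's approach: take $Z$ to be the orbit closure of $x$, apply Theorem~\ref{thm:uni-pair}, and argue that $x$ lies in $\im(\psi) \subseteq \im(\phi)$. The only difference is that where the paper cites \cite[Proposition~3.4]{bdes} for the fact that any nonempty $\GL$-stable open subset of $\overline{\GL \cdot x}$ must contain $x$, you spell this argument out directly via Chevalley's theorem and density of the orbit.
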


\begin{proof}
Apply the theorem with $Z=\ol{O}_x$ the orbit-closure of $x$. Since $\psi$ maps dominantly to $Z$, its image contains a dense open subset of $Z$, and therefore contains $x$ \cite[Proposition 3.4]{bdes}. As $\im(\psi) \subset \im(\phi)$, the result follows.
\end{proof}

The basic idea of the proof of Theorem~\ref{thm:uni-pair} is to blow-up $X$ along $Z$ to reduce to the codimension one case (Corollary~\ref{cor:shift-pair}). Most of the work goes into verifying that blow-ups behave in the desired manner for $\GL$-varieties.

\subsection{Projective \texorpdfstring{$\GL$}{GL}-varieties}

We now establish a result on projective $\GL$-varieties. We make no attempt at generality, and just do what we need for our proof of Theorem~\ref{thm:uni-pair}.

Let $R=\bigoplus_{n \ge 0} R_n$ be a graded $\GL$-algebra. (Every
$\GL$-algebra has the central grading, in which the coordinates on
$\bA^{\lambda}$ have degree $|\lambda|$; 
$R$ is equipped with an additional grading.) Put $S:=R_0$, an ordinary $\GL$-algebra. We make the following assumptions:
\begin{itemize}
\item $R$ is finitely $\GL$-generated (and thus the same is true for $S$).
\item $R$ is a domain (and thus the same is true for $S$).
\item $R_k \ne 0$ for some $k>0$.
\end{itemize}
Let $X=\Proj(R)$ and $Y=\Spec(S)$, and let $\pi \colon X \to Y$ be the natural map. Note that there is no problem in defining $\Proj$ for arbitrary graded rings, and it is always a scheme (see, e.g., \stacks{01M3}), however the map $\pi$ will typically not be a projective morphism (since, by definition, this has a finite type requirement). The following is the result we require:

\begin{proposition} \label{prop:surj}
The map $\pi$ is surjective.
\end{proposition}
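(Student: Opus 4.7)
The plan is to show that for every $\fp\in\Spec(S)$ the fiber $\pi^{-1}(\fp)=\Proj(\bar R)$ is non-empty, where $\bar R:=R\otimes_S\kappa(\fp)$ is a graded $\kappa(\fp)$-algebra with $\bar R_0=\kappa(\fp)$. Such a fiber is non-empty precisely when the irrelevant ideal $\bar R_+$ is not contained in the nilradical $\sqrt{0}$ of $\bar R$, equivalently when $\bar R$ has a non-nilpotent homogeneous element of positive degree.

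First I would handle the generic point. Since $R$ and $S$ are both domains with $R_0=S$, the zero ideal of $R$ is homogeneous and does not contain $R_+$ (as $R_k\ne0$), and its image under $\pi$ is the zero ideal of $S$. Hence $\pi$ is dominant and $\GL$-equivariant, so its image is a $\GL$-stable dense subset of $\Spec(S)$. In particular, the generic fiber is non-empty.

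For an arbitrary $\fp$, I would choose a finite set of homogeneous $\GL$-generators $f_1,\ldots,f_N$ of $R$ (possible by finite $\GL$-generation), at least one of which has positive degree (since $R_k\ne 0$). The hypothesis that the fiber over $\fp$ is empty would mean that every $\GL$-translate of every positive-degree generator has nilpotent image in $\bar R$; equivalently, the pullback to $R$ of $\sqrt{0}\subseteq\bar R$ would be a graded ideal $J\subseteq R$ with $J\cap S=\fp$ and $R_+\subseteq\sqrt{J}$. I would then derive a contradiction by exploiting $\GL$-equivariance (to reduce to a $\GL$-invariant $\fp$, in which case $J$ becomes $\GL$-invariant) and the finite $\GL$-generation of $R$ (to force some positive-degree element to have non-nilpotent image, contradicting $R_+\subseteq\sqrt{J}$). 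The main obstacle will be precisely this last step: controlling nilpotence of generators under specialization at a prime that need not be $\GL$-invariant requires careful use of the $\GL$-structure on $R$, and a naive torsion-free argument fails because $R$ is not a priori flat over $S$. An alternative, potentially cleaner route would be to cover $\Proj(R)$ by basic open affines $D_+(f)=\Spec(R[1/f]_0)$, shift so that $f$ becomes $\GL$-invariant in a shifted subring, apply Chevalley's theorem (Theorem~\ref{thm:chev}) to obtain $\GL$-constructibility of the image, and then argue that a $\GL$-constructible dense $\GL$-stable subset of the irreducible $\GL$-variety $\Spec(S)$ must be the whole variety.
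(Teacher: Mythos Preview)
Your alternative route---use Chevalley to get $\GL$-constructibility of $\im(\pi)$, then conclude---is on the right track and is essentially what the paper does, but your final step is wrong. A $\GL$-constructible dense $\GL$-stable subset of an irreducible affine $\GL$-variety need \emph{not} be all of it: any proper non-empty $\GL$-stable open subset is already a counterexample. Density by itself gives you nothing; you need a closedness (properness) input, and $\pi$ is not projective in any useful sense since $R$ is not of finite type over $S$.

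The missing idea, and the heart of the paper's proof, is to pass to finite-dimensional truncations. First one gets $\GL$-constructibility of $\im(\pi)$ by applying Chevalley not to the $D_+(f)$ (which would require shifting, as you note) but directly to the punctured affine cone $\tilde X=\Spec(R)\setminus V(R_+)$, which is a quasi-affine $\GL$-variety mapping onto $X$. Constructibility then reduces surjectivity of $\pi$ to surjectivity of $\pi_n\colon \Proj(R\{K^n\})\to \Spec(S\{K^n\})$ for all $n\gg 0$. Now $\pi_n$ is an honest projective morphism of finite-type $K$-schemes, so it suffices to show it is dominant; and dominance holds because its generic fiber is the $\Proj$ of a graded domain over a field with a non-zero element of positive degree (using $R_k\{K^n\}\ne 0$ for $n\gg 0$), hence non-empty.

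Your first, fiber-by-fiber approach cannot succeed without exactly this kind of global input: absent flatness, there is nothing to prevent every positive-degree element of $R$ from becoming nilpotent in $R\otimes_S\kappa(\fp)$ at a given $\fp$. The projectivity of $\pi_n$ in finite dimensions is precisely the substitute for flatness that forces the fibers to remain non-empty.
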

\begin{proof}
Let $\tilde{X} = \Spec(R) \setminus V(R_+)$, where $R_+$ is the ideal $\bigoplus_{n>0} R_n$
of $R$. This is the affine cone over $X$, with the origin removed. We have a natural map
$\tilde{X} \to X$. Since $\tilde{X}$ is a quasi-affine $\GL$-variety, the image of $\tilde{X}
\to X \stackrel{\pi}{\to} Y$ is $\GL$-constructible by Theorem~\ref{thm:chev}. Since
$\tilde{X} \to X$ is surjective, it follows that the image of $\pi$ is $\GL$-constructible.
Put $X\{K^n\}:=\Proj(R\{K^n\})$. To show that $\pi$ is surjective, it thus suffices to show that $\pi_n\colon X\{K^n\}\to Y\{K^n\}$ is surjective for all $n \gg 0$.

Let $k$ be such that $R_k \ne 0$ and let $n \gg 0$ be such that $R_k\{K^n\} \ne 0$.  Since $R \to R\{K^n\}$ is a surjection of graded rings, we have a natural closed immersion $X\{K^n\} \to X$. Consider the following commutative diagram:
\begin{displaymath}
\xymatrix{
X\{K^n\} \ar[r] \ar[d]_{\pi_n} & X \ar[d]^{\pi} \\
Y\{K^n\} \ar[r] & Y }
\end{displaymath}
The fiber over the generic point of $Y\{K^n\}$ is the $\Proj$ of the graded ring $\Frac(S\{K^n\}) \otimes_{S\{K^n\}} R\{K^n\}$. Since this is a graded domain over a field with a nonzero element of positive degree, its $\Proj$ is non-empty. It follows that $\pi_n$ is dominant. However, $\pi_n$ is also a projective morphism. Thus $\pi_n$ is surjective, which completes the proof.
\end{proof}

\subsection{Blow-ups}

Let $S$ be a finitely $\GL$-generated $\GL$-algebra and let $I$ be a finitely $\GL$-generated ideal of $S$. We assume that $S$ is a domain and $I \ne 0$. The \emph{blow-up algebra} is $R=\bigoplus_{n \ge 0} I^n$, where $I^0=S$, and the \emph{blow-up} of $Y=\Spec(S)$ along $I$ is $X=\Proj(R)$. Note that $R$ is finitely $\GL$-generated; moreover, $R$ is a subalgebra of $S[t]$, and is therefore a domain. Thus Proposition~\ref{prop:surj} shows that the natural map $X \to Y$ is surjective.

For a nonzero element $f \in I$, let $S[I/f]$ be the subset of $\Frac(S)$ consisting of all
elements of the form $a/f^k$ where $a \in I^k$. One easily verifies that this is a subring of
$\Frac(S)$, and thus a domain. The affine scheme $\Spec(S[I/f])$ is naturally an open subset
of $X$, and these sets (as $f$ varies) form an affine cover of $X$ \stacks{0804}. The extended ideal $IS[I/f]$ is the principal ideal $fS[I/f]$ \stacks{07Z3}. We summarize this discussion in the following proposition.

\begin{proposition} \label{prop:blowup}
Let $f$ be a nonzero element of $I$, and consider $\pi \colon \Spec(S[I/f]) \to Y$.
\begin{enumerate}
\item $\pi$ is dominant.
\item Any $s \in Y$ belongs to the image of $\pi$ for some choice of $f$.
\item We have $\pi^{-1}(V(I))=V(f)$.
\end{enumerate}
\end{proposition}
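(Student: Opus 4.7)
The plan is to deduce all three statements essentially from the facts already assembled in the paragraph preceding the proposition, namely that $S[I/f]$ is a subring of $\Frac(S)$ lying inside $X=\Proj(R)$, that the charts $\Spec(S[I/f])$ cover $X$ as $f$ ranges over nonzero elements of $I$, that the extended ideal $IS[I/f]$ equals $fS[I/f]$, and that the structural map $X \to Y$ is surjective by Proposition~\ref{prop:surj}.

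For part (a), I would simply observe that $\pi$ is induced by the inclusion $S \hookrightarrow S[I/f] \subset \Frac(S)$. A ring map between domains which is injective gives a dominant map of spectra, so $\pi$ is dominant. For part (b), let $s \in Y$. By Proposition~\ref{prop:surj} the map $X \to Y$ is surjective, so there is some $x \in X$ lying over $s$. Since the affine opens $\Spec(S[I/f])$ for nonzero $f \in I$ cover $X$ (per \stacks{0804}), the point $x$ lies in some chart $\Spec(S[I/f])$, and then $s = \pi(x)$ is in the image of that chart, as required.

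For part (c), the key input is the identity $I \cdot S[I/f] = f \cdot S[I/f]$ (per \stacks{07Z3}). Under the map $\pi$, the preimage $\pi^{-1}(V(I))$ is cut out precisely by the extended ideal $IS[I/f]$ inside $\Spec(S[I/f])$, which by the stated identity is the principal ideal generated by $f$. Hence $\pi^{-1}(V(I)) = V(fS[I/f]) = V(f)$.

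There is no real obstacle here: the substantive content was absorbed into Proposition~\ref{prop:surj} and the two Stacks Project citations. The proposition is essentially a bookkeeping statement recording three geometric consequences of the standard description of blow-up charts, adapted to the $\GL$-setting, where the only point to verify is that the surjectivity statement used in (b) continues to hold, and this has just been arranged in Proposition~\ref{prop:surj}.
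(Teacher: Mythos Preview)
Your proposal is correct and matches the paper's approach exactly: the paper presents this proposition as a summary of the preceding discussion (with no separate proof), and the ingredients you invoke---the inclusion $S \hookrightarrow S[I/f] \subset \Frac(S)$, the surjectivity from Proposition~\ref{prop:surj} together with the affine cover \stacks{0804}, and the identity $IS[I/f]=fS[I/f]$ from \stacks{07Z3}---are precisely those assembled there.
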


\subsection{Proof of Theorem~\ref{thm:uni-pair}}

We begin with two lemmas.

\begin{lemma} \label{lem:uni-pair-1}
Let $X$ be an irreducible $\GL$-variety, let $f$ be a non-zero $\GL$-invariant function on $X$, and let $Z$ be an irreducible component of $V(f)$, the zero locus of $f$. Then $Z$ has codimension one in $X$.
\end{lemma}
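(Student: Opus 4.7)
The plan is to establish $\dim(X \vert Z) = 1$ by comparing dimension functions via Krull's principal ideal theorem on the finite-dimensional approximations $X\{K^m\}$. Since $X$ is irreducible as a $\GL$-variety, each $X\{K^m\}$ is an irreducible finite-dimensional variety of dimension $\delta_X(m)$.

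First, I would verify that the restriction $f_m$ of $f$ to $X\{K^m\}$ is a nonzero regular function for $m \gg 0$. The key point is that $f$, being $\GL$-invariant, has central degree zero; combined with the fact that $K[X]$ is the union of its subalgebras $K[X\{K^m\}]$ (via the inclusions induced by the retractions $\bA^{\ulambda} \to \bA^{\ulambda}\{K^m\}$), this forces $f \in K[X\{K^m\}]$ for $m$ large, and the restriction remains nonzero there.

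Next, I would apply Krull's principal ideal theorem on the irreducible affine variety $X\{K^m\}$: every irreducible component of $V(f_m)$ has dimension exactly $\delta_X(m) - 1$. Writing $V(f) = Z_1 \cup \cdots \cup Z_k$ with $Z = Z_1$ (using finiteness of the irreducible components, which follows from $\GL$-noetherianity of $K[X]$ and connectedness of $\GL$), the sectioning $X\{K^m\} \hookrightarrow X$ yields $V(f_m) = \bigcup_i Z_i\{K^m\}$. This immediately gives the upper bound $\delta_Z(m) \le \delta_X(m) - 1$.

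The main obstacle is the matching lower bound $\delta_Z(m) \ge \delta_X(m) - 1$, that is, $Z\{K^m\}$ must contain some irreducible component of $V(f_m)$ of maximal dimension. For this, I would use that inclusions of closed $\GL$-subvarieties of $X$ are detected on the approximations $\{K^m\}$ for $m$ large, as follows from $K[X] = \bigcup_m K[X\{K^m\}]$: since $Z$ is an irreducible component of $V(f)$, $Z \not\subseteq \bigcup_{i \ge 2} Z_i$, hence $Z\{K^m\} \not\subseteq \bigcup_{i \ge 2} Z_i\{K^m\}$ for $m \gg 0$. Picking a point in the difference, any irreducible component of $V(f_m)$ through that point is forced by irreducibility to lie inside $Z\{K^m\}$, and its dimension is $\delta_X(m) - 1$ by Krull. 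Combining the two bounds yields $\dim(X \vert Z) = 1$.
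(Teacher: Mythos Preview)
Your argument is correct and follows the same route as the paper: pass to the finite-dimensional approximations $X\{K^m\}$ and apply Krull's principal ideal theorem there. The paper is terser---it simply asserts that $Z\{K^n\}$ is an irreducible component of $V(f_n)$ for $n \gg 0$---while your sandwich argument (upper bound from $Z\{K^m\} \subseteq V(f_m)$, lower bound from $Z\{K^m\}$ containing a component through a point outside $\bigcup_{i\ge 2} Z_i\{K^m\}$) amounts to a careful justification of exactly that assertion; as a minor aside, the central-degree-zero observation is unnecessary, since any single element of $K[X]$ already lies in $K[X\{K^m\}]$ for $m$ large.
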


\begin{proof}
For $n \gg 0$, $Z\{K^n\}$ is an irreducible component of the zero locus of $f$ in $X\{K^n\}$, and thus has codimension one by Krull's principal ideal theorem. The result follows. 
\end{proof}

\begin{lemma} \label{lem:uni-pair-2}
Let $X$ be an irreducible affine $\GL$-variety and let $Z$ be an irreducible closed
$\GL$-subvariety. Then we can find a commutative diagram of maps of $\GL$-varieties 
\begin{displaymath}
\xymatrix{
Z' \ar[r] \ar[d] & X' \ar[d] \\
Z \ar[r] & X }
\end{displaymath}
where $X'$ is an irreducible affine $\GL$-variety, $Z'$ is an irreducible closed $\GL$-subvariety of $X'$ of codimension one, and the vertical maps are dominant.
\end{lemma}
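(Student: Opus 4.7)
\emph{Plan.} My approach is to realize $X'$ as a $\GL$-stable affine chart of the $\GL$-equivariant blow-up of $X$ along $Z$, with $Z'$ an irreducible component of the exceptional divisor. Let $I \subset K[X]$ be the defining ideal of $Z$, a nonzero finitely $\GL$-generated $\GL$-stable ideal, and form the blow-up $\tilde X := \Proj\bigl(\bigoplus_{k \geq 0} I^k\bigr)$, a $\GL$-scheme over $X$. By Proposition~\ref{prop:surj} the structure map $\pi \colon \tilde X \to X$ is surjective, and hence $\pi^{-1}(Z)$ surjects onto $Z$.

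The affine charts of $\tilde X$ have the form $U_f = \Spec(K[X][I/f])$ for nonzero $f \in I$, and $U_f$ is $\GL$-stable precisely when $f$ is $\GL$-invariant. To find such an $f$, we shift: every nonzero polynomial $\GL$-representation contains nonzero $G(n)$-invariants for $n$ sufficiently large, since $\bV_\lambda$ has the trivial $G(n)$-summand $\bS_\lambda(K^n)$ once $n$ exceeds the number of rows of $\lambda$ (by the branching rule). After replacing $X$, $Z$, $I$ by $\Sh_n(X)$, $\Sh_n(Z)$, $\Sh_n(I)$ for $n \gg 0$, both $\Sh_n(I)$ and the conormal module $\Sh_n(I/I^2)$ acquire nonzero $\GL$-invariant subspaces; by exactness of the $\GL$-invariants functor on polynomial representations in characteristic zero, the surjection $\Sh_n(I) \twoheadrightarrow \Sh_n(I/I^2)$ restricts to a surjection on invariants. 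Hence one may pick a $\GL$-invariant $f \in \Sh_n(I)$ whose image in $I/I^2$ is nonzero; a sufficiently generic such $f$ is also nondegenerate on the fiber of the Rees algebra over the generic point $\eta$ of $\Sh_n(Z)$, so that $\eta \in \pi(U_f)$.

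Set $X' := U_f$, an irreducible affine $\GL$-variety. By Proposition~\ref{prop:blowup}(c) the preimage of $\Sh_n(Z)$ in $X'$ equals $V(f)$, and Lemma~\ref{lem:uni-pair-1} implies that every irreducible component of $V(f)$ has codimension one in $X'$. Nondegeneracy of $f$ guarantees that some such component $Z'$ maps dominantly onto $\Sh_n(Z)$. Composing $X' \subset \tilde X \xrightarrow{\pi} \Sh_n(X)$ with the natural dominant morphism $\Sh_n(X)[1/h] \to X$ of \cite[Proposition~4.4]{bdes} (after a further harmless $\GL$-invariant localization to land in its domain) yields the required dominant $\GL$-equivariant morphism $X' \to X$; its restriction to $Z'$ is a dominant morphism $Z' \to Z$, completing the diagram.

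\emph{Main obstacle.} The crux is to produce $f \in I$ that is simultaneously $\GL$-invariant (so that $U_f$ is $\GL$-stable) and nondegenerate on the generic fiber of $\pi$ (so that $V(f)$ has a component dominating $Z$). The first is achieved by shifting; the second is more delicate, and relies on applying the shift argument to the conormal module $I/I^2$ together with the exactness of the $\GL$-invariants functor on polynomial representations.
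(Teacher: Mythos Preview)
Your overall strategy---blow up $X$ along $Z$, take an affine chart $U_f$, and let $Z'$ be an irreducible component of the exceptional locus $V(f)$---is exactly the paper's approach. The difference lies in how you locate $f$, and there your argument has a gap.

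You shift first and then try to find a $\GL$-invariant $f \in I$ that is nondegenerate over the generic point $\eta$ of $Z$. Your conormal-module argument shows only that, after shifting, some invariant $f$ has nonzero image in $I/I^2$; but what you need is nonzero image in $(I/I^2)\otimes_{S/I}\kappa(\eta)$, i.e.\ an invariant that is not $S/I$-torsion in $I/I^2$. Having $(I/I^2)^{\GL}\neq 0$ does not rule out the possibility that all invariants are torsion, and the appeal to ``sufficiently generic'' does not help: genericity within the invariant subspace cannot produce a point in an open set that might miss that subspace entirely. So as written, the step ``$\eta\in\pi(U_f)$'' is not justified.

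The paper's fix is to reverse the order of operations, which also eliminates the conormal module entirely. By Proposition~\ref{prop:blowup}(b) (surjectivity of the blow-up), one can first choose a nonzero $f\in I$, with no invariance requirement, such that $\eta\in\pi(U_f)$. Then one uses the basic fact that \emph{every} element of a polynomial $\GL$-representation is $G(n)$-invariant for some $n$ (since it lies in $\bS_\lambda(K^n)\subset\bV_\lambda$ for finitely many $\lambda$ and some $n$). Shifting by that $n$ makes the already-chosen $f$ invariant, and $S[I/f]$ becomes a finitely $\GL$-generated $\GL$-algebra. This is both simpler and airtight.

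A minor point: in your last step, the natural dominant map is $\Sh_n(X)\to X$ (and $\Sh_n(Z)\to Z$); no further localization by an $h$ is needed, and invoking $\Sh_n(X)[1/h]$ here conflates this with the shift theorem.
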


\begin{proof}
Write $X=\Spec(S)$ and $Z=V(I)$, where $I$ is a finitely $\GL$-generated ideal of $S$; this is possible by \cite[Theorem 1]{draisma}, but we note that $I$ may not be radical. Let $f \in I$ be nonzero, let $\tilde{X}=\Spec(S[I/f])$ and let $\pi \colon \tilde{X} \to X$ be the natural map. By Proposition~\ref{prop:blowup}, we may choose $f$ so that the image of $\pi$ contains the generic point of $Z$. Let $n$ be such that $f$ is $G(n)$-invariant. Then $S[I/f]$ is a finitely $G(n)$-generated $G(n)$-algebra, and so $\tilde{X}$ is a $G(n)$-variety. Of course, $\pi$ is a map of $G(n)$-varieties.

Let $\tilde{Z}=\pi^{-1}(Z)$. This is a closed $G(n)$-subvariety of $\tilde{X}$; in fact, it
is the vanishing locus of $f$ in $\tilde{Z}$ by Proposition~\ref{prop:blowup}. Let
$\tilde{Z}_1, \ldots, \tilde{Z}_r$ be the irreducible components of $\tilde{Z}$. Since
$\pi(\tilde{Z})$ contains the generic point of $Z$, it follows that one of these components, say $\tilde{Z}_1$, maps dominantly to $Z$.

We have the following diagram of $G(n)$-varieties
\begin{displaymath}
\xymatrix{
\tilde{Z}_1 \ar[r] \ar[d] & \tilde{X} \ar[d] \\
Z \ar[r] & X }
\end{displaymath}
where the vertical maps are dominant. Let $\GL$ act on all the above objects through the isomorphism $\GL \cong G(n)$. Let $Z'=\tilde{Z}_1$ and $X'=\tilde{X}$, with this $\GL$-action. The above diagram thus becomes
\begin{displaymath}
\xymatrix{
Z' \ar[r] \ar[d] & X' \ar[d] \\
\Sh_n(Z) \ar[r] & \Sh_n(X) }
\end{displaymath}
where all objects are $\GL$-varieties and all arrows are morphisms of $\GL$-varieties. Composing with the natural maps $\Sh_n(X) \to X$ and $\Sh_n(Z) \to Z$ gives the diagram in the statement of the lemma. Since $Z'$ is an irreducible component of the vanishing locus of the $\GL$-invariant function $f$ on $X'$, it has codimension one by Lemma~\ref{lem:uni-pair-1}.
\end{proof}

We now prove the theorem. Let $X$ and $Z$ be as in the statement of the theorem. Choose a diagram as in Lemma~\ref{lem:uni-pair-2}. Now apply Corollary~\ref{cor:shift-pair} to $Z' \subset X'$ to obtain a diagram
\begin{displaymath}
\xymatrix@C=4em{
B \times \bA^{\ulambda} \ar[r]^{j \times \id} \ar[d] &
C \times \bA^{\ulambda} \ar[d] \\
Z' \ar[r] & X' }
\end{displaymath}
where the notation is as in Corollary~\ref{cor:shift-pair}. Composing this square with the one from Lemma~\ref{lem:uni-pair-2} gives the desired result.

\section{Existence of curves} \label{s:curves}

\subsection{The main theorem}

The following is the main theorem of this section:

\begin{theorem} \label{thm:curve}
Let $X$ be an irreducible $\GL$-variety, let $U$ be a
non-empty open subset of $X$, and
let $x$ be an $\Omega$-point of $X$, where $\Omega$ is a field extension of $K$. Then there
is an irreducible smooth curve $C$ over a finite extension $\Omega'$ of $\Omega$ and a map of $K$-schemes $i \colon C \to X$ such that $\im(i)$ meets $U$ and contains $x$.
\end{theorem}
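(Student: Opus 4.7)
\emph{Plan.} I follow the strategy outlined in Section~\ref{ss:intro-img}. Let $x_0 \in X$ denote the scheme-theoretic image of the morphism $x \colon \Spec(\Omega) \to X$, so that $\kappa(x_0) \hookrightarrow \Omega$.

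First, applying Corollary~\ref{cor:uni-pair} to $x_0$ produces a dominant morphism of $\GL$-varieties
\[
\phi \colon C_0 \times \bA^{\ulambda} \longrightarrow X
\]
with $x_0 \in \im(\phi)$, where $C_0$ is an irreducible affine $K$-variety of finite type and $\ulambda$ is a pure tuple. By Theorem~\ref{thm:chev}, $\im(\phi)$ is $\GL$-constructible in $X$, and since it is dense in the irreducible $X$ it must contain a non-empty $\GL$-stable open subset $V$. Because $X$ is irreducible, $V \cap U$ is a non-empty open subset, and I pick any point $y$ of it.

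Second, I would lift $x$ and $y$ to points $\tilde{x}$ and $\tilde{y}$ of $C_0 \times \bA^{\ulambda}$ defined over a finite extension $\Omega'$ of $\Omega$. For $\tilde{x}$, the scheme-theoretic conclusion of Corollary~\ref{cor:uni-pair} provides some pre-image $\tilde{x}_0$ of $x_0$; applying the decomposition theorem (Theorem~\ref{thm:decomp}) to $\phi$ near $\tilde{x}_0$ reduces the lifting problem to one for a dominant morphism of finite-dimensional varieties (the ``$B \to C$'' piece of the local elementary model), which always admits a lift after a finite field extension by standard finite-type algebraic geometry. A similar and simpler argument produces $\tilde{y}$ (possibly after further enlarging $\Omega'$).

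Third, since $C_0 \times \bA^{\ulambda}$ is elementary, constructing an irreducible smooth curve $C$ through $\tilde{x}$ and $\tilde{y}$ is straightforward: one takes an irreducible smooth curve in $C_0$ through the two projections to $C_0$ (a standard result in classical algebraic geometry for irreducible finite-dimensional varieties) and pairs it with the affine line in $\bA^{\ulambda}$ joining the two projections to $\bA^{\ulambda}$; after normalizing and restricting as needed, this yields a smooth irreducible curve $C$ over $\Omega'$ together with a map into $C_0 \times \bA^{\ulambda}$ hitting $\tilde{x}$ and $\tilde{y}$. Composing with $\phi$ gives the required $K$-morphism $i \colon C \to X$, whose image contains $x$ (through $\tilde{x}$) and meets $U$ (through $y$).

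The main technical obstacle I foresee is the lifting step: showing that an $\Omega$-point lying in the image of a dominant morphism of $\GL$-varieties admits a pre-image defined over a finite extension of $\Omega$, despite the morphism $\phi$ not being of finite type. Using the local elementary structure from the decomposition theorem to reduce this to a finite-type lifting problem over a finite-dimensional base is where the real work lies; once it is done, the remaining classical constructions of curves in $C_0$ and lines in $\bA^{\ulambda}$ are essentially formal.
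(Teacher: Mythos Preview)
Your proposal is correct and follows the same route as the paper: apply Corollary~\ref{cor:uni-pair}, lift $x$ and a point of $\phi^{-1}(U)$ to the elementary source $C_0\times\bA^{\ulambda}$, join them by a curve there, and compose with $\phi$. The paper is terser on both points you elaborate: it simply asserts the existence of $\tilde{x}$ over a finite extension of $\Omega$ (your reduction via the decomposition theorem to the finite-type piece is the right justification, and is what the paper itself invokes for the analogous lift in Proposition~\ref{prop:bounded-lift}), and for the curve it uses Lemma~\ref{lem:elem-curve}, which reduces directly to finding a curve in $B\times\bA^1$ via the interpolation $(b,t)\mapsto(b,(1-t)x_2+ty_2)$ rather than ``pairing'' a curve in $C_0$ with a line in $\bA^{\ulambda}$.
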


We first require a lemma.

\begin{lemma} \label{lem:elem-curve}
Let $B$ be an irreducible variety, let $\ulambda$ be a tuple, and let $x$ and $y$ be two $\Omega$-points of $B \times \bA^{\ulambda}$. Then there exists an irreducible smooth curve $C$, defined over a finite extension $\Omega'$ of $\Omega$, and a map of $K$-schemes $i \colon C \to B \times \bA^{\ulambda}$ such that $\im(i)$ contains $x$ and $y$.
\end{lemma}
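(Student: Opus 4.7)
The plan is to exploit the product structure by handling the two factors separately: find a smooth irreducible curve mapping to $B$ through the projections of $x$ and $y$, and then extend this map into $\bA^{\ulambda}$ by exploiting the fact that $\bA^{\ulambda}$ has infinitely many independent coordinate functions, whose values at two prescribed points can be chosen freely.

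Write $x = (x_B, x_A)$ and $y = (y_B, y_A)$ with $x_B, y_B \in B(\Omega)$ and $x_A, y_A \in \bA^{\ulambda}(\Omega) = \Hom_K(\bV_{\ulambda}, \Omega)$. First I would produce a smooth irreducible curve $C$ over some finite extension $\Omega'$ of $\Omega$, a $K$-morphism $\pi \colon C \to B$, and distinct $\Omega'$-points $p_x, p_y \in C(\Omega')$ with $\pi(p_x) = (x_B)_{\Omega'}$ and $\pi(p_y) = (y_B)_{\Omega'}$. If $x_B$ and $y_B$ have the same underlying point of $B$, I would simply take $C = \bA^1_{\Omega}$ with $\pi$ the constant morphism through $x_B$ and $p_x = 0$, $p_y = 1$. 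Otherwise I would invoke the classical fact that two points of an irreducible variety can be joined by an irreducible curve (after a base extension), then normalize to obtain smoothness, absorbing any additional residue-field extensions into $\Omega'$.

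Next I would construct a $K$-morphism $f \colon C \to \bA^{\ulambda}$ with $f(p_x) = (x_A)_{\Omega'}$ and $f(p_y) = (y_A)_{\Omega'}$. Such an $f$ is the same datum as a $K$-linear map $\bV_{\ulambda} \to \Omega'[C]$. Because $p_x \neq p_y$, evaluation at these two points gives a surjective $K$-linear map $\Omega'[C] \to \Omega' \times \Omega'$ by the Chinese remainder theorem, and the desired values package into the $K$-linear map $\bV_{\ulambda} \to \Omega' \times \Omega'$ sending $v \mapsto (x_A(v), y_A(v))$. Since $\bV_{\ulambda}$ is free as a $K$-module, this map lifts through the surjection, yielding the required $f$. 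Setting $i = (\pi, f) \colon C \to B \times \bA^{\ulambda}$ then gives $i(p_x) = x_{\Omega'}$ and $i(p_y) = y_{\Omega'}$, so $\im(i)$ contains (the image points of) both $x$ and $y$.

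The main obstacle, in terms of depth, is the classical curve-joining step, which I would treat as a background fact from classical algebraic geometry (taking care to pass to a finite extension $\Omega'$ so that both prescribed points become rational on the normalized curve). The remainder reduces to elementary linear algebra: a $K$-linear map out of a free $K$-module lifts through any $K$-linear surjection. This is precisely where the infinite-dimensional factor $\bA^{\ulambda}$ pays off — it provides enough room to interpolate any two prescribed values independently of the choice of $C$ and $\pi$.
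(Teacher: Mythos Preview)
Your argument is correct in spirit and reaches the same conclusion, but it differs from the paper's route in how it handles the $\bA^{\ulambda}$ factor. The paper does not invoke the Chinese remainder theorem or any lifting argument: instead it uses the linear structure of $\bA^{\ulambda}$ directly, defining $\phi \colon B \times \bA^1 \to B \times \bA^{\ulambda}$ by $\phi(b,t) = (b,\,(1-t)x_A + t\,y_A)$, so that $x = \phi(x_B,0)$ and $y = \phi(y_B,1)$, and then applies the classical curve-joining fact once, inside the finite-dimensional variety $B \times \bA^1$. This has two advantages over your approach: the interpolation is explicit, and the two points $(x_B,0)$ and $(y_B,1)$ are automatically distinct (since $0 \ne 1$), so no case distinction is needed. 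Your CRT/lifting step is a valid substitute and would generalise to targets without an additive structure, but here the paper's linear interpolation is the shorter path.

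One small slip: your case distinction is on whether $x_B$ and $y_B$ have the same \emph{underlying point} of $B$, but two distinct $\Omega$-points can share an underlying point, in which case the constant map through $x_B$ does not hit $y_B$. The correct dichotomy is whether $x_B = y_B$ as $\Omega$-points; the paper's reduction to $B \times \bA^1$ sidesteps this entirely.
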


\begin{proof}
Write $x=(x_1,x_2)$ where $x_1$ is a point of $B$ and $x_2$ is a point of $\bA^{\ulambda}$, and similarly write $y=(y_1,y_2)$. Consider the map (defined over $\Omega$)
\begin{displaymath}
\phi \colon B \times \bA^1 \to B \times \bA^{\ulambda}, \qquad
\phi(b, t) = (b, (1-t)x_2+ty_2).
\end{displaymath}
We have $x=\phi(x_1,0)$ and $y=\phi(y_1,1)$. Since $B \times \bA^1$ is an irreducible finite dimensional variety, any two points can be joined by an irreducible smooth curve. Let $j \colon C \to B \times \bA^1$ be a curve joining $(x_1,0)$ and $(y_1,1)$. Then $i=\phi \circ j$ is the curve joining $x$ and $y$.
\end{proof}

\begin{proof}[Proof of Theorem~\ref{thm:curve}]
Applying Corollary~\ref{cor:uni-pair} let $\phi \colon B \times \bA^{\ulambda} \to X$ be a dominant map of $\GL$-varieties with $x \in \im(\phi)$, where $B$ is an irreducible affine variety over $K$. Let $\tilde{x}$ be a point in $B \times \bA^{\ulambda}$ defined over a finite extension of $\Omega$ that maps to $x$. Since $\phi$ is dominant, its image meets $U$, and hence $\phi^{-1}(U)$ is a non-empty open subset of $B \times \bA^{\ulambda}$. Let $\tilde{y}$ be a point of $\phi^{-1}(U)$ defined over a finite extension of $\Omega$. Appealing to the lemma, we can find an irreducible smooth curve $C$ defined over a finite extension of $\Omega$ and a map $j \colon C \to B \times \bA^{\ulambda}$ that joins $\tilde{x}$ and $\tilde{y}$. The curve $i=\phi \circ j$ in $X$ joins $x$ and $U$.
\end{proof}

The following example shows that in infinite dimensional algebraic geometry, curves can be scarce. Thus Theorem~\ref{thm:curve} truly is a special property of $\GL$-varieties.

\begin{example} \label{ex:no-curve}
Let $R=K[x_1,x_2,\ldots]$, let $I$ be the kernel of the map
\begin{displaymath}
K[x_1,x_2,\ldots] \to K[t^q \mid q \in \bQ_{\geq 0}], \qquad x_i \mapsto t^{1/i},
\end{displaymath}
and let $Z=V(I)$ be the closed subvariety of $\Spec(R)=\bA^{\infty}$ defined by $I$.
If $C$ is a curve over any finite extension $\Omega$ of $K$ (which,
we recall, is required to be finite type over $\Omega$) then any map $C \to Z$ is
constant, as one can see by looking at the map of coordinate rings. In
particular, the origin in $Z$ cannot be joint by such a curve to
the open set where $x_1$ is nonzero. 
\end{example}

\subsection{The Krull intersection theorem}

We now prove a version of the Krull intersection theorem for $\GL$-algebras. We begin with the following observation.

\begin{proposition}
Let $X$ be a $\GL$-variety, let $x$ be a point of the scheme $X$, and let $f$ be a function on $X$. Suppose that $f$ vanishes on every curve through $x$, i.e., given an irreducible curve $C$ defined over a field extension $\Omega$ of $K$ and a $K$-morphism $i \colon C \to X$ such that $x \in \im(i)$, we have $i^*(f)=0$. Then $f$ vanishes on every irreducible component of $X$ that contains $x$. In particular, $f$ vanishes on a non-empty open neighborhood of $x$, and $f$ vanishes in the local ring $\cO_{X,x}$.
\end{proposition}

\begin{proof}
It suffices to treat the case where $X$ is irreducible; we must then show $f=0$ identically. Suppose not. Then $X[1/f]$ is non-empty, and so by Theorem~\ref{thm:curve} there is a curve $i \colon C \to X$ such that $x \in \im(i)$ and $\im(i)$ meets $X[1/f]$. But then $i^*(f) \ne 0$, a contradiction.
\end{proof}

The following theorem is our main result.

\begin{theorem}
Let $A$ be a $\GL$-algebra that is reduced and finitely $\GL$-generated, and let $\fp$ be a $\GL$-stable prime ideal of $A$. Then $\bigcap_{n \ge 1} \fp^n A_{\fp}=0$.
\end{theorem}

\begin{proof}
Let $X=\Spec(A)$, and let $x \in X$ be the point $\fp$. Suppose $f$ is an element of $A$ whose image in $A_{\fp}$ belongs to $\bigcap_{n \ge 1} \fp^n A_{\fp}$; we must show that the image of $f$ in $A_{\fp}$ vanishes. Let $i \colon C \to X$ be a curve through $x$ in the above sense. There is some prime ideal $\fq$ of $\Omega[C]$ that contracts to $\fp$ under the ring homomorphism $i^* \colon K[X] \to \Omega[C]$, and so $i^*$ induces a local ring homomorphism $K[X]_{\fp} \to \Omega[C]_{\fq}$. The condition on $f$ implies that its image in $\Omega[C]_{\fq}$ belongs to $\bigcap_{n \ge 1} \fq^n \Omega[C]_{\fq}$, and so $i^*(f)=0$ by the classical Krull intersection theorem. The previous proposition shows that $f$ vanishes in some open neighborhood of $x$, and thus maps to~0 in $A_{\fp}$.
\end{proof}

\section{Image closures} \label{s:image}

\subsection{Bounded Laurent points}

The purpose of \S \ref{s:image} is to prove the first main theorem of this paper, Theorem~\ref{mainthm1}, which states that points in an image closure can be realized by limits. This can be phrased in more abstract terms using bounded points in fields of Laurent series. We now give a precise definition of what this means, and prove some simple results concerning the notion.

\begin{definition}
Let $R$ be a $\GL$-algebra and let $\Omega$ be an extension field of $K$. We say that a $K$-algebra homomorphism $f \colon R \to \Omega\lpp t \rpp$ is \defn{bounded} if for every finite-length $\GL$-subrepresentation $V$ of $R$ there is some $n$ such that $f(V) \subseteq t^{-n} \Omega\lbb t \rbb$. We say that an $\Omega\lpp t \rpp$-point of a quasi-affine $\GL$-variety $X$ is \defn{bounded} if the corresponding homomorphism $\Gamma(X, \cO_X) \to \Omega\lpp t \rpp$ is bounded.
\end{definition}

\begin{example}
Let $v$ be an $\Omega\lpp t \rpp$-point of $\bA^{\ulambda}$. Then $v$ is bounded if and only if it can be expressed in the form $v=\sum_{k \ge n} v_k t^k$ where $n$ is an integer and the $v_k$'s are $\Omega$-points of $\bA^{\ulambda}$. This can be proven using Proposition~\ref{prop:bdd-gens} below.
\end{example}

We now establish a few general properties of bounded homomorphisms and points.

\begin{proposition}
Let $R$ be a $\GL$-algebra, let $f \colon R \to \Omega \lpp t \rpp$ be a $K$-algebra homomorphism and let $m \ge 0$. Then $f$ is bounded if and only if it is bounded when regarding $R$ as a $G(m)$-algebra (or, equivalently, as a homomorphism from the $\GL$-algebra $\Sh_m(R)$).
\end{proposition}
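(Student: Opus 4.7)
The plan is to deduce the equivalence from the fact that $R$ is semisimple both as a polynomial $\GL$-representation and as a polynomial $G(m)$-representation (identifying $G(m) \cong \GL$), so in each case a finite-length subrepresentation is a finite direct sum of irreducibles. I would then transfer finite-length subrepresentations in either direction.

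For the direction ``$\GL$-bounded implies $G(m)$-bounded'', I would let $W \subseteq R$ be a finite-length $G(m)$-subrepresentation, pick finitely many $G(m)$-generators $w_1,\ldots,w_k$ of $W$, and observe that each $w_i$, being an element of the polynomial $\GL$-representation $R = \bigoplus_\alpha M_\alpha$ (decomposed into $\GL$-isotypic summands), has nonzero components in only finitely many $M_\alpha$ and hence lies in a finite-length $\GL$-subrepresentation $V_i \subseteq R$. Setting $V = V_1 + \cdots + V_k$ gives a finite-length $\GL$-subrepresentation of $R$ that in particular is $G(m)$-stable and contains each $w_i$, hence contains $W$. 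Applying $\GL$-boundedness to $V$ then yields $n$ with $f(W) \subseteq f(V) \subseteq t^{-n}\Omega\lbb t \rbb$.

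For the converse, I would take a finite-length $\GL$-subrepresentation $V = \bigoplus_{i=1}^s \bV_{\lambda_i} \subseteq R$ and argue that it remains finite-length when $R$ is viewed as a $G(m)$-representation, whereupon $G(m)$-boundedness supplies the desired bound. It suffices to treat a single $\bV_\lambda$. Decomposing $\bV = \bV_m \oplus \bV'$ as $G(m)$-representations, where $\bV_m$ is the fixed $m$-dimensional trivial part and $\bV'$ is the standard $G(m) \cong \GL$-representation, the Littlewood--Richardson rule (or equivalently the branching rule underlying the shift construction) gives
\[
\bV_\lambda |_{G(m)} \;=\; \bS_\lambda(\bV_m \oplus \bV') \;=\; \bigoplus_{\mu,\nu} c^{\lambda}_{\mu\nu}\, \bS_\mu(\bV_m) \otimes \bS_\nu(\bV').
\]
Only finitely many pairs $(\mu,\nu)$ with $|\mu|+|\nu|=|\lambda|$ and $\ell(\mu)\le m$ contribute, and each $\bS_\mu(\bV_m)$ is finite-dimensional (and $G(m)$-trivial). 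Hence $\bV_\lambda|_{G(m)}$ is a finite direct sum of polynomial $G(m)$-irreducibles, i.e.\ has finite length.

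The only substantive step is the branching computation in the converse direction; the forward direction is essentially formal once one notes that a single element of a polynomial $\GL$-representation always lies in a finite-length subrepresentation.
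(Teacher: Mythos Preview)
Your proof is correct and follows essentially the same approach as the paper's: both directions reduce to showing that finite-length subrepresentations for one group action sit inside finite-length subrepresentations for the other. The paper asserts these two facts without justification, while you supply the details (single elements of polynomial representations generate finite-length subrepresentations in the forward direction, and the branching rule for $\bS_\lambda(\bV_m \oplus \bV')$ in the converse); apart from this added detail, the arguments are the same.
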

\begin{proof}
Suppose $f$ is bounded when $R$ is thought of as a $\GL$-algebra and let $V$ be a finite-length $G(m)$-subrepresentation of $R$. Then $V$ generates a finite-length $\GL$-subrepresentation $W$. Since $f$ is bounded when $R$ is thought of as a $\GL$-algebra, we have $f(W) \subseteq t^{-n} \Omega \lbb t \rbb$ for some $n$. Since $f(V) \subseteq f(W)$ it follows that $f$ is bounded when $R$ is thought of as a $G(m)$-algebra.

Now suppose that $f$ is bounded when $R$ is thought of as a $G(m)$-algebra and let $V$ be a finite-length $\GL$-subrepresentation of $R$. Then $V$ has finite length as a $G(m)$-representation and so $f(V) \subseteq t^{-n} \Omega \lbb t \rbb$ for some $n$. Thus $f$ is bounded when $R$ is thought of as a $\GL$-algebra.
\end{proof}

\begin{proposition} \label{prop:bdd-gens}
Let $R$ be a $\GL$-algebra generated by a finite-length $\GL$-subrepresentation $V$ and let $f \colon R \to \Omega\lpp t \rpp$ be a $K$-algebra homomorphism. Then $f$ is bounded if and only if $f(V) \subseteq t^{-n} \Omega \lbb t \rbb$ for some $n$.
\end{proposition}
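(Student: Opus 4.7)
The plan is to dispatch the forward direction immediately from the definition of boundedness, and to reduce the converse to the multiplicativity of $f$ using the central grading on $R$. The forward direction is essentially free: $V$ is itself a finite-length $\GL$-subrepresentation of $R$, so if $f$ is bounded then $f(V) \subseteq t^{-n}\Omega\lbb t\rbb$ for some $n$.

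For the converse, I would assume $f(V)\subseteq t^{-n}\Omega\lbb t \rbb$ and first decompose $V = V_0 \oplus V_+$, where $V_0 = V \cap K$ is the sum of the trivial summands $\bV_\emptyset$ of $V$ and $V_+$ is the sum of the remaining simple summands, all of which sit in central degree at least $1$. Since $V_0$ consists of scalars, $V_+$ alone still generates $R$ as a $K$-algebra. Then, given an arbitrary finite-length $\GL$-subrepresentation $W \subseteq R$, I would decompose it as a finite direct sum of irreducibles $\bV_{\mu_j}$ and set $d = \max_j |\mu_j|$; because each $\bV_{\mu_j}$ sits in a single central degree $|\mu_j|$, we have $W \subseteq R_{\leq d}$. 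The key observation is that, since the central grading is multiplicative and every nonzero homogeneous element of $V_+$ has central degree at least $1$, any element of $R_{\leq d}$ is a $K$-linear combination of products $v_1 \cdots v_k$ with each $v_i \in V_+$ and $k \leq d$. Applying multiplicativity of $f$ together with the standing hypothesis then yields $f(v_1 \cdots v_k) = f(v_1) \cdots f(v_k) \in t^{-nk}\Omega\lbb t\rbb \subseteq t^{-nd}\Omega\lbb t\rbb$, so $f(W) \subseteq t^{-nd}\Omega\lbb t\rbb$, which is exactly boundedness.

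I do not anticipate a serious obstacle. The only mildly delicate point is the separation of the trivial summands of $V$ from $V_+$, in order to secure a uniform bound $k \leq d$ on the number of factors of $V_+$ required to express any element of $R_{\leq d}$; once that piece of bookkeeping is in place, the argument is purely formal, driven by multiplicativity of $f$ and the compatibility of the central grading with multiplication in $R$.
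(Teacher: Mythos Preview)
There is a genuine gap. You equate ``the sum of the trivial summands $\bV_\emptyset$ of $V$'' with $V \cap K$, but a copy of the trivial representation inside a $\GL$-algebra $R$ is just a $\GL$-invariant element of $R$, not necessarily a scalar. Concretely, take $R = K[x]$ with the trivial $\GL$-action (a perfectly good $\GL$-algebra) and $V = Kx$. Then $V$ is a single copy of $\bV_\emptyset$, so $V_0 = V$ and $V_+ = 0$; your claim that $V_+$ generates $R$ fails. Worse, the central grading here is trivial ($R = R_0$), so every finite-dimensional $W$ lies in $R_{\le 0}$, and your bound ``at most $d$ factors from $V_+$'' gives nothing. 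The same obstruction appears whenever $V$ contains $\GL$-invariant elements lying outside $K$, which is the generic situation for elementary $\GL$-varieties $B \times \bA^{\ulambda}$ with $\dim B > 0$.

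The paper's proof sidesteps the central grading entirely. It uses the filtration $R_m := \sum_{k=0}^m V^k$, where $V^k$ is the image of $\Sym^k(V) \to R$. Since $V$ generates $R$ we have $R = \bigcup_m R_m$, each $R_m$ is $\GL$-stable, and a finitely generated $\GL$-representation $W$ has all its generators in some $R_m$, hence $W \subseteq R_m$; multiplicativity then gives $f(W) \subseteq t^{-nm}\Omega\lbb t \rbb$. Your argument is easily repaired by swapping the central-degree filtration for this one; the multiplicativity step you wrote down is then exactly what is needed.
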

\begin{proof}
If $f$ is bounded then $f(V) \subseteq t^{-n} \Omega \lbb t \rbb$ for some $n$, by definition. Conversely, suppose this condition holds and let $W$ be an arbitrary finite-length $\GL$-subrepresentation of $R$. Let $V^k$ be the image of the natural map $\Sym^k(V) \to R$. Since $V$ generates $R$ and $W$ is finitely generated as a representation, it follows that $W \subseteq \sum_{k=0}^m V^k$ for some integer $m$. We thus see that $f(W) \subseteq t^{-nm} \Omega \lbb t \rbb$, and so $f$ is bounded.
\end{proof}

The next property is deeper, but easily proved using the decomposition theorem from \cite{bdes}.

\begin{proposition} \label{prop:bounded-lift}
Let $\phi \colon X \to Y$ be a map of $\GL$-varieties. Let $\Omega$ be an arbitrary field extension of $K$ and let $y$ be a bounded $\Omega\lpp t \rpp$-point in $\im(\phi)$. Then $y$ lifts to a bounded $\Omega'\lpp t^{1/n} \rpp$-point of $X$ for some $n$ and some finite extension $\Omega'$ of $\Omega$.
\end{proposition}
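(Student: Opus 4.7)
The plan is to reduce to the elementary case using the decomposition theorem and then construct the lift explicitly. Let $y_0 \in Y$ be the image of the closed point of $\Spec \Omega\lpp t \rpp$ under $y$. Since $y_0 \in \im(\phi)$, pick $x_0 \in X$ with $\phi(x_0) = y_0$. Local elementarity of $\phi$ at $x_0$, provided by Theorem~\ref{thm:decomp}, yields an integer $m$ and $G(m)$-stable affine open neighborhoods $U \ni x_0$ and $V \ni y_0$ such that, after identifying $G(m) \cong \GL$, $\phi|_U : U \to V$ takes the form $\psi \times \pi : B \times \bA^{\ulambda} \to C \times \bA^{\umu}$, with $\psi : B \to C$ a surjection of irreducible affine varieties and $\pi$ the projection coming from $\umu \subseteq \ulambda$. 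Because $V$ is open in $Y$ and the image of the generic point of $\Spec \Omega\lpp t \rpp$ generalizes to $y_0 \in V$, the morphism $y$ factors through $V$. Boundedness survives this reduction: it is insensitive to the change of group from $\GL$ to $G(m)$ (by the first proposition of this section), and it transfers from $K[Y]$ to the localization $K[V]$ by a direct check on finite-length $\GL$-subrepresentations. We may thus replace $\phi$ by its elementary restriction $\psi \times \pi$.

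Now write $y = (c, v)$, where $c$ is an $\Omega\lpp t \rpp$-point of the ordinary variety $C$ and $v$ is a bounded $\Omega\lpp t \rpp$-point of $\bA^{\umu}$. Using $\bA^{\ulambda} = \bA^{\umu} \times \bA^{\ulambda \setminus \umu}$, set $u := (v, 0)$, a bounded point lifting $v$ through $\pi$. To lift $c$ through $\psi$, observe that the base change $B \times_C \Spec \Omega\lpp t \rpp$ is a nonempty scheme of finite type over $\Omega\lpp t \rpp$, so it has a point over some finite extension $L'$ of $\Omega\lpp t \rpp$. Since $K$, and hence $\Omega$, has characteristic zero, the structure theory for finite extensions of the complete discretely valued field $\Omega\lpp t \rpp$, combined with the Newton-Puiseux theorem, embeds $L'$ into $\Omega'\lpp t^{1/n} \rpp$ for some finite extension $\Omega'/\Omega$ and integer $n \ge 1$. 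This produces a lift $b : \Spec \Omega'\lpp t^{1/n} \rpp \to B$ of $c$, and the pair $(b, u)$ defines an $\Omega'\lpp t^{1/n} \rpp$-point of $U \subseteq X$ lifting $y$.

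Finally, boundedness of the lift follows from Proposition~\ref{prop:bdd-gens}: $K[U] \cong K[B] \otimes_K \Sym(\bV_{\ulambda})$ is $\GL$-generated by a finite-length subrepresentation composed of a finite set of $K$-algebra generators of $K[B]$ (on which $\GL$ acts trivially) together with $\bV_{\ulambda}$. On the $B$-generators the lift evaluates inside $\Omega'\lpp t^{1/n} \rpp$, and hence automatically has bounded denominator; on $\bV_{\ulambda}$ it evaluates to $(v, 0)$, whose boundedness is inherited from that of $v$. The main obstacle in this argument is the embedding $L' \hookrightarrow \Omega'\lpp t^{1/n} \rpp$: it is classical but uses characteristic zero essentially (to control ramification), and is precisely where the ramification index $n$ and the finite residue extension $\Omega'/\Omega$ in the conclusion are produced.
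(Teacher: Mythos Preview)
Your argument is essentially the paper's one-line proof unpacked in full: reduce to an elementary morphism via the decomposition theorem, then lift the finite-dimensional factor by Puiseux and the $\bA^{\umu}$ factor trivially. The core reasoning is sound, but a few statements need correction.

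First, $\Spec \Omega\lpp t \rpp$ is the spectrum of a field and therefore has a single point; there is no separate ``closed point'' or ``generic point'', and the sentence about the generic point generalizing to $y_0$ is vacuous. Fortunately this only simplifies matters: once $y_0$ lies in the locally closed set $V$, the one-point scheme $\Spec \Omega\lpp t \rpp$ automatically factors through $V$.

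Second, Theorem~\ref{thm:decomp} does not assert that $\phi$ itself is locally elementary at every point; it produces a stratification of $X$ and $Y$ such that $\phi$ restricted to each stratum is locally elementary. So $U$ and $V$ are $G(m)$-open in their strata, hence only locally closed in $X$ and $Y$. Your claim that ``$V$ is open in $Y$'' is therefore unjustified. Again this is harmless here precisely because $\Spec\Omega\lpp t\rpp$ has one point, but the argument as written would fail for an $\Omega\lbb t\rbb$-point.

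Third, the passage of boundedness from $K[Y]$ to $K[V]$ is not quite a ``localization'': $V$ is open in its closure $\bar V$, and $K[V]$ need not be a localization of $K[\bar V]$ at a single element. One clean way around this is to choose $h \in K[\bar V]$ with $y_0 \in D(h) \subset V$, shift further so that $h$ is invariant, and observe that $K[\bar V][1/h]$ is then $G(m')$-generated by a finite-length subrepresentation of $K[\bar V]$ together with the single invariant $1/h$; boundedness on $D(h)$ follows from Proposition~\ref{prop:bdd-gens}, and this suffices. Your final paragraph verifying boundedness of the lift is correct once this is in place.
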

\begin{proof}
This is clear for elementary maps, and so the general case follows from the decomposition theorem (Theorem~\ref{thm:decomp}).
\end{proof}

\subsection{The main result}

Suppose that $y(t)$ is an $\Omega\lpp t\rpp$-point of an affine scheme $X$ and $x$ is an $\Omega$-point. We write $x=\lim_{t \to 0} y(t)$ to indicate that $y$ extends to an $\Omega\lbb t \rbb$-point of $X$ and that its value at~0 is $x$. We sometimes also use this expression for points $x$ in the topological space $X$.

\begin{theorem} \label{thm:limit}
Let $\phi \colon Y \to X$ be a morphism of $\GL$-varieties
and let $x$ be an $\Omega$-point of $\ol{\im{\phi}}$, where
$\Omega$ is a field extension of $K$. Then there exists a
finite extension $\Omega'$ of $\Omega$ and a bounded
$\Omega'\lpp t \rpp$-point $y(t)$ of $Y$ such that $x=\lim_{t \to 0} \phi(y(t))$.
\end{theorem}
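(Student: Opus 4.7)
The plan is to convert the statement ``$x \in \ol{\im(\phi)}$'' into the geometric statement that $x$ is the limit along a formal one-parameter family inside $\im(\phi)$, using the curves produced by Theorem~\ref{thm:curve}, and then to lift that family through $\phi$ via Proposition~\ref{prop:bounded-lift}. Passing to an irreducible component of $\ol{\im(\phi)}$ containing $x$ and to an irreducible component of $Y$ that maps dominantly into it, we may assume $X$ is irreducible and $\phi$ dominant. By Theorem~\ref{thm:chev} the set $\im(\phi)$ is $\GL$-constructible, and by dominance it contains a nonempty $\GL$-stable open subset $U \subseteq X$. Apply Theorem~\ref{thm:curve} to obtain a finite extension $\Omega'/\Omega$, an irreducible smooth curve $C$ over $\Omega'$, and a morphism $i \colon C \to X$ whose image contains $x$ and meets $U$.

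\medskip

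Next, I would turn this curve into a bounded Laurent point in $\im(\phi)$. Pick a point $c \in C$ with $i(c) = x$, enlarging $\Omega'$ to a finite extension $\Omega''$ so that $c$ is $\Omega''$-rational. If $i$ is the constant map to $x$, then $x \in U \subseteq \im(\phi)$ and a constant lift through $\phi$ finishes the proof; otherwise the generic point of the irreducible curve $C$ maps into the dense open $i^{-1}(U) \subseteq C$. Smoothness of $C$ at $c$ gives $\widehat{\cO}_{C,c} \cong \Omega''\lbb t \rbb$, so composing $\Spec(\Omega''\lbb t \rbb) \to C$ with $i$ yields a $K$-morphism $\Spec(\Omega''\lbb t \rbb) \to X$. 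Restricting to the generic point gives an $\Omega''\lpp t \rpp$-point $\gamma(t)$ lying in $U(\Omega''\lpp t \rpp) \subseteq \im(\phi)(\Omega''\lpp t \rpp)$ with $\lim_{t \to 0}\gamma(t) = i(c) = x$. Because the defining algebra homomorphism $K[X] \to \Omega''\lpp t \rpp$ factors through $\Omega''\lbb t \rbb$, every finite-length $\GL$-subrepresentation of $K[X]$ is sent into $\Omega''\lbb t \rbb$; hence $\gamma$ is a bounded $\Omega''\lpp t \rpp$-point of $X$ contained in $\im(\phi)$.

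\medskip

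Finally, I would apply Proposition~\ref{prop:bounded-lift} to $\phi$ and $\gamma$ to obtain an integer $n \ge 1$, a finite extension $\Omega^{\sharp}/\Omega''$, and a bounded $\Omega^{\sharp}\lpp t^{1/n}\rpp$-point $y(t)$ of $Y$ with $\phi(y(t)) = \gamma(t)$ under the inclusion $\Omega''\lpp t \rpp \hookrightarrow \Omega^{\sharp}\lpp t^{1/n}\rpp$. Since $\gamma$ extends to an $\Omega^{\sharp}\lbb t^{1/n}\rbb$-point taking the value $x$ at $t=0$, we get $\lim_{t \to 0}\phi(y(t)) = x$; renaming $t^{1/n}$ as the Laurent variable delivers the statement of the theorem. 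The main anticipated obstacle is not the construction of $\gamma$---boundedness there is automatic because the formal disk comes from the completion of a finite-type curve, so the representing homomorphism takes values in the power series ring---but rather the transport of boundedness across $\phi$, which is exactly what Proposition~\ref{prop:bounded-lift} (ultimately resting on the decomposition theorem, Theorem~\ref{thm:decomp}) is built to guarantee.
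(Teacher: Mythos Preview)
Your proposal is correct and follows essentially the same route as the paper's proof: reduce to a dominant map, use Chevalley's theorem to get an open $U\subseteq\im(\phi)$, invoke Theorem~\ref{thm:curve} to produce a curve through $x$ meeting $U$, complete at a preimage of $x$ to get a formal arc $\gamma(t)$ in $U$, and then lift via Proposition~\ref{prop:bounded-lift}. If anything, you are slightly more careful than the paper in two places---explicitly passing to irreducible components (needed for Theorem~\ref{thm:curve}) and enlarging $\Omega'$ so that the chosen point $c$ is rational before identifying $\widehat{\cO}_{C,c}$ with a power-series ring---but the argument is the same.
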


%Note that this is a more precise version of Theorem~\ref{mainthm1}. 

\begin{proof}
Replacing $X$ with $\ol{\im{\phi}}$, we assume $\phi$ is dominant. Let $U$
be a non-empty open $\GL$-stable subset of $X$ contained in $\im{\phi}$;
this exists by Theorem~\ref{thm:chev}.  By Theorem~\ref{thm:curve} there
exists an irreducible smooth curve $C$, defined over a finite extension
$\Omega'$ of $\Omega$, that passes through $x$ and intersects $U$.

Letting $t$ be a uniformizer on $C$ at $x$, the complete
local ring of $C$ at $x$ is isomorphic to $\Omega' \lbb t \rbb$. The composition
\begin{displaymath}
\Spec(\Omega'\lbb t \rbb) \to C \to X
\end{displaymath}
carries the generic point of $\Spec(\Omega'\lbb t \rbb)$ into
$U$ (as it first maps to the generic point of $C$). Hence
this composition is an $\Omega'\lpp t \rpp$-point $x(t)$ of $U$
such that $x=\lim_{t \to 0} x(t)$. 

Now, replacing $t$ with $t^n$ and $\Omega'$ by a finite
extension if necessary, we can lift $x(t)$ to a bounded
$\Omega'\lpp t \rpp$-point $y(t)$ of $Y$ by Proposition~\ref{prop:bounded-lift}. We have $x=\lim_{t \to 0} \phi(y(t))$, as required.
\end{proof}

\section{Uniformity} \label{s:app}

Let $\phi \colon X \to Y$ be a map of $\GL$-varieties.
Theorem~\ref{thm:limit} shows that every point of $\ol{\im(\phi)}$ can
be realized as a limit of a bounded $\Omega\lpp t \rpp$-point. We now
show that one can uniformly control the negative powers of $t$
appearing in such a limit. We will require the following notion: given an embedding $X \to
\bA^{\ulambda}$, we define the \defn{size} of a $\Omega\lpp t
\rpp$-point $x(t)$ of $X$ to be the minimal $s \ge 0$ such that each
coordinate of $t^s x(t)$ in $\bA^{\ulambda}$ is in $\Omega \lbb t
\rbb$. Our main theorem is then:

\begin{theorem} \label{thm:Uniformity}
Let $\phi \colon X \to Y$ be a dominant map of affine $\GL$-varieties, and fix a closed embedding $X \to \bA^{\ulambda}$. Then there exists $s \ge 0$ such that for any point $y \in Y$ there is an $\Omega\lpp t \rpp$-point $x(t)$ of $X$, for some field $\Omega/K$, of size at most $s$ such that $y=\lim_{t \to 0} \phi(x(t))$.
\end{theorem}

If $X' \to X$ is a dominant morphism of $\GL$-varieties, then it suffices
to prove the theorem for the induced map $X' \to Y$. We may thus assume
$X=B \times \bA^{\umu}$ for some affine variety $B$.  Moreover, shrinking
$B$, we may assume $B$ smooth. We make these assumptions in what follows.

The key idea is to construct limits geometrically. A \defn{limiting datum} is a tuple
\begin{displaymath}
\sL=(S,\ol{C},P,Q_{\bullet},\pi,i,r,s)
\end{displaymath}
where
\begin{itemize}
\item $S$ is a smooth finite dimension affine variety.
\item $\ol{C} \to S$ is a smooth projective family of curves with geometrically irreducible fibers.
\item $P$ and $Q_1, \ldots, Q_m$ are sections of $\ol{C} \to S$ that are fiberwise non-intersecting; put $C=\ol{C} \setminus \{Q_1, \ldots, Q_m\}$.
\item $\pi$ is a uniformizer on $C$ at $P$ and nowhere vanishing on $C \setminus P$; in other words, the ideal sheaf of $P$ in $C$ is generated by $\pi$.
\item $i \colon C \setminus P \to B$ is a morphism of varieties.
\item $r,s$ are non-negative integers.
\end{itemize}
Fix such a datum. Define a map of $\GL$-varieties
\begin{displaymath}
\psi \colon (C \setminus P) \times (\bA^{\umu})^{r+s-1} \to Y, \qquad
\psi(x, v_{-s}, \ldots, v_r) = \phi(i(x), \sum_{i=-s}^r \pi(x)^i v_i).
\end{displaymath}
There is a maximal closed subscheme $Z$ of $S \times (\bA^{\umu})^{r+s-1}$ such that $\psi$ extends to $P$ over $Z$. We see this as follows. Given a function $f \colon Y \to \bA^1$, Zariski locally consider the Laurent expansion of the function $\psi^*(f)$ at $P$. The scheme $Z$ is defined as the vanishing locus of the negative coefficients of this series, for all choices of $f$. Since $Z$ is canonically defined, it is $\GL$-stable.

Let $\psi' \colon Z \to Y$ be the function induced by evaluating $\psi$ at $P$. Define $\vert \sL \vert$ to be the image of $\psi'$. This is a $\GL$-constructible subset of $Y$ by Chevalley's theorem (Theorem~\ref{thm:chev}). Moreover, there is some $s'$ such that every point in $\vert \sL \vert$ can be realized by a limit of size $\le s'$. Thus to prove the theorem, it suffices to show that there are limiting data $\sL_1, \ldots, \sL_n$ such that $Y=\bigcup_{i=1}^n \vert \sL_i \vert$. This is implied by the following lemma, by $\GL$-constructibility:

\begin{lemma}
We have $Y=\bigcup \vert \sL \vert$, where the union is over all choices of $\sL$.
\end{lemma}

\begin{proof}
Let $y \in Y$ be given. By Theorem~\ref{thm:limit}, there is a bounded $\Omega \lpp t \rpp$-point $x(t)$ of $X$ such that $y=\lim_{t \to 0} \phi(x(t))$; here $\Omega/K$ is some algebraically closed field. Let $x_1(t)$ be the $B$-component of $x(t)$, and let $x_2(t)$ be the $\bA^{\umu}$-component of $x(t)$. We can find a curve $C_{\Omega}$ over $\Omega$, an $\Omega$-point $P_{\Omega}$ of $C_{\Omega}$, a uniformizer $\pi_{\Omega}$, and a map $i \colon C_{\Omega} \setminus P_{\Omega} \to B_{\Omega}$ such that the map $x'_1(t) \colon \Omega \lpp t \rpp \to B_{\Omega}$ induced by $i$ (with $t=\pi$) agrees with $x_1$ to very high order. A standard argument shows that this data arises as the generic point of similar data $C$, $P$, $\pi$, and $i$ defined over an irreducible variety $S$ over $K$. By shrinking $S$ (i.e., replacing it with a dense open subset), we can arrange for the conditions in the definition of limiting datum. Finally, note that we can truncate $x_2(t)$ to some high degree without changing the limit. We can thus represent $x_2(t)$ as a Laurent polynomial in $\pi$ with coefficients in $\bA^{\umu}$. This completes the proof.
\end{proof}

\section{De-bordering}

In this section, we establish a general de-bordering result for
$\GL$-constructible cones $Z \subseteq \bA^{\ulambda}$, and derive
Theorem~\ref{thm:DeBorderingSymd} from this. As in the introduction,
we assume that $K$ is algebraically closed. 

\subsection{An interpolation result}

Let $\ol{C}$ be an irreducible smooth projective curve over $K$, let
$Q_1,\ldots,Q_m$ be $K$-points on $\ol{C}$, and let $C=\ol{C}
\setminus \{Q_1, \ldots, Q_m\}$. The \emph{degree} of a rational
function on $C$ is the number of zeros it has on $\ol{C}$, counted
with multiplicity. Let $V$ be a vector space, and identify $V^*$ with
the scheme $\Spec(\Sym(V))$; note that $V^*$ is a product of copies
$\bA^1$ indexed by a basis of $V$. For the purpose of this section, we say that a morphism $C \to V^*$ has \defn{degree} $\le d$ if each component does. We require the following interpolation result:

\begin{proposition} \label{prop:interp}
Let $\phi \colon C \to V^*$ be a morphism of degree $\le d$. Let
$r=md+1$ and let $P_1, \ldots, P_r$ be a sufficiently general
$r$-tuple of $K$-points in $C$. Then $\im(\phi)$ is contained in the span of $\phi(P_1), \ldots, \phi(P_r)$.
\end{proposition}

\begin{proof}
Let $L$ be the space of rational functions $\psi$ on $\ol{C}$ such
that each pole of $\psi$ is contained in $\{Q_1, \ldots, Q_m\}$ and
has degree at most $d$. Then $L$ is a vector space of dimension at
most $r$. Let $\psi_1, \ldots, \psi_s$ be a basis of $L$, with $s \le
r$. Each component of $\phi$ is a linear combination of the
$\psi_i$'s, and so we see that $\phi$ has the form $\lambda_1
\psi_1+\cdots+\lambda_s \psi_s$, where the $\lambda_i$'s belong to
$V^*$. For $Q \in C$, let $\psi_Q \in K^s$ be the vector $(\psi_1(Q),
\ldots, \psi_s(Q))$, and let $\lambda=(\lambda_1, \ldots, \lambda_s)$,
so that $\phi(Q)$ is the dot product $\lambda \cdot \psi_Q$. Thus the
image of $\phi$ consists of all dot products $\lambda \cdot \psi_Q$
with $Q \in C$. Since the $P_i$ are sufficiently general, each $\psi_Q$ is a linear combination of the $\psi_{P_i}$'s, and so the result follows.
\end{proof}

\subsection{Closure and sums}

Let $Z$ be a constructible subset of $\bA^{\ulambda}$, and assume
that $Z$ is preserved under scalar multiplication.  Let $mZ$ be the sum
$Z+\cdots+Z$, where there are $m$ copies of $Z$. 

\begin{proposition} \label{prop:sum2}
Let $Z \subset \bA^{\ulambda}$ be as above. Then $\ol{Z} \subset rZ$ for some $r$.
\end{proposition}

\begin{proof}
First suppose that there is some morphism of $\GL$-varieties $\phi \colon X \to \bA^{\ulambda}$ with image $Z$. By the unirationality theorem, we may replace $X$ with $B \times
\bA^{\umu}$ where $B$ is a smooth affine variety. Suppose 
$\sL=(S,\ol{C},P,Q_{\bullet},\pi,i,r,s)$
is a limiting datum for this situation, so that $\vert \sL \vert$ is a subset of $\ol{Z}$. Let $y \in \vert \sL \vert$ be a $K$-point. Then there are $K$-points $s \in S$ and $v_{-s}, \ldots, v_r \in \bA^{\umu}$ such that the map
\begin{displaymath}
\psi \colon C_s \setminus Q_{\bullet} \to \bA^{\ulambda}, \qquad
x \mapsto \phi(i(x), \sum_{i=-s}^r \pi(x)^i v_i).
\end{displaymath}
is defined at $P$ and takes the value $y$ there. The degree of this
map can be bounded independent of the point $s$ and the $v_i$'s. By Proposition~\ref{prop:interp}, there is thus some $r$, depending only on $\sL$, such that $y \in rZ$. Since $y$ is arbitrary, this shows that $\vert \sL \vert \subset rZ$. Since $\ol{Z}$ is covered by finitely many sets of the form $\vert \sL \vert$, the result follows.

We now treat the general case. The conclusion does not depend on $\GL$, and so we are free to shift. After shifting, we can find a dense open affine $\GL$-subvariety $U$ of $\ol{Z}$ that is contained in $Z$. Now apply the previous proposition to the map $U \to \bA^{\ulambda}$. This map has image $U$ and image closure $\ol{Z}$, so we find $\ol{Z} \subset rU$, and thus $\ol{Z} \subset rZ$.
\end{proof}

\subsection{Main result on de-bordering}

Let $Z$ be a $\GL$-constructible subset of $\bA^{\ulambda}$ that is
preserved under scalar multiplication. Let $x \in \bA^{\ulambda}$. Define the \defn{$Z$-rank} of $x$, denoted $\rank_Z(x)$, to be the minimal $r$ such that $x \in rZ$; if no such $r$ exists then we define the rank to be $\infty$. Define the \defn{border $Z$-rank} of $x$, denoted $\brank_Z(x)$, to be the minimal $r$ such that $x \in \ol{rZ}$; again, if no $r$ exists then we use $\infty$. Our main de-bordering result is the following:

\begin{theorem} \label{thm:DeBorderingConstr}
Fix $Z$ as above. Then there is a function $\Phi \colon \bN \to \bN$ such that $\rank_Z(x) \le \Phi(\brank_Z(x))$ whenever $\brank_Z(x)$ is finite.
\end{theorem}

\begin{proof}
Given $r$, Proposition~\ref{prop:sum2} shows that there is some $m(r)$ such that
\begin{displaymath}
\ol{rZ} \subset m(r)rZ.
\end{displaymath}
We thus see that if $\brank_Z(x) \le r$ then $\rank_Z(x) \le m(r)r$. We can thus take $\Phi(r)=m(r) r$.
\end{proof}

\subsection{De-bordering and uniform limits in finite dimensions}

Recall from \S\ref{ss:functorial} that a closed $\GL$-subvariety
$Z \subseteq \bA^{\ulambda}$ gives rise to a contravariant
functor that assigns to finite-dimensional $K$-vector space $V$
a closed subvariety $Z\{V\}$ of $\bA^{\ulambda}\{V\}$. Conversely,
every closed subfunctor of $\bA^{\ulambda}$ arises from a closed
$\GL$-subvariety (see, e.g., \cite[Section 1.4]{draisma}). It follows
that Theorem~\ref{thm:DeBorderingConstr} has the following corollary.

\begin{corollary}
Let $Z$ be a closed subfunctor of the functor $\bA^{\ulambda}$ and assume
that $Z\{V\}$ is preserved under scalar multiplication for every $V$.  Then
there is a function $\Phi \colon \bN \to \bN$ such that $\rank_{Z}(x)
\le \Phi(\brank_{Z})$ whenever $\brank_{Z}(x)$ is finite.
\end{corollary}

This corollary immediately implies Theorem~\ref{thm:DeBorderingSymd}
concerning the rank relative to a functorial closed subset $Z(V) \subseteq
\Sym^d(V)$. To be precise, there $Z$ is covariant, and to apply the
corollary above we need to work with the closed subfunctor $\tilde{Z}$
of $\bA^{(d)}$ that maps $V$ to $Z(V^*) \subseteq \Sym^d(V^*)$, which
we identify with $\bA^{(d)}\{V\}=\Spec(\Sym(\bS_{(d)}V))$ in the natural
manner. In Theorem~\ref{thm:DeBorderingSymd} we did not need to require
that $Z(V)$ is preserved under scalar multiplication between because this
is automatic: for each $t \in K$, $t \cdot \id_V$ preserves $Z(V)$ and acts
via the scalar $t^d$ on $\Sym^d(V)$.

In a similar manner, Theorem~\ref{thm:Uniformity} has the following
consequence in finite dimensions. 

\begin{corollary}
Let $Z$ be a closed subfunctor of the functor $\bA^{\ulambda}$ and assume
that $Z\{V\}$ is preserved under scalar multiplication for every $V$.  Then
for every $r$ there exists a constant $\sigma_{Z,r}$ with the following
property. For any vector space $V$, if $x \in \bA^{\ulambda}\{V\}$
has border $Z$-rank $\leq r$, then there is an expression
\[ x=\lim_{t \to 0} \frac{1}{t^s} \sum_{i=1}^r z_i(t) \]
where $s \leq \sigma_{Z,r}$ and $z_i(t)$ is a $K\lbb t \rbb$-point of
$Z\{V\}$. 
\end{corollary}

\begin{proof}
Regard $Z$ as a closed $\GL$-subvariety of $\bA^{\ulambda}$ and
consider the addition map $\phi:Z^r \to \bA^{\ulambda}$. We may regard
$x$ as a point of $\bA^{\ulambda}\{K^n\} \subseteq \bA^{\ulambda}$
via an isomorphism $K^n \to V$. By Theorem~\ref{thm:Uniformity}, we
have $x=\lim_{t \to 0} \phi(y_1(t),\ldots,y_r(t))$ for some $K\lpp t
\rpp$-point $(y_1(t),\ldots,y_r(t)) \in Z^r \subseteq (\bA^{\ulambda})^r$
whose size is bounded by some $s$ depending on $Z,r$ only. This implies
that
\[ x=\lim_{t \to 0} \phi(y_1(t),\ldots,y_r(t))
= \frac{1}{t^s} \phi(t^s y_1(t),\ldots,t^s y_1(t)), \]
and since $x \in Z\{K^n\}$, this identity is preserved if we replace
each $y_i(t)$ by its image in $Z\{K^n\}$ under the natural projection
$Z \to Z\{K^n\}$. We may therefore take $\sigma_{Z,r}=s$ and
$z_i(t):=t^s y_i(t)$, regarded as a $K\lbb t \rbb$-point of $Z\{V\}$
via the inverse isomorphism $V \to K^n$; here we use that $Z\{V\}$ is
preserved under scalar multiplication. 
\end{proof}

\end{document}